\newtheorem{theorem}{Theorem}[section]
\newtheorem{proposition}[theorem]{Proposition}
\newtheorem{corollary}[theorem]{Corollary}
\newtheorem{lemma}[theorem]{Lemma}
\newtheorem{example}[theorem]{Example}
\def\IC{{\mathbb{C}}}
\def\IR{{\mathbb{R}}}
\def\cG{{\cal G}}
\def\cA{{\cal A}}
\def\cU{{\cal U}}
\def\cF{{\cal F}}
\def\cF{{\mathcal F}}
\def\cS{{\mathcal S}}
\def\bA{{\bf A}}
\def\bB{{\bf B}}
\def\bF{{\bf F}}
\def\bV{{\bf V}}
\def\b0{{\bf 0}}
\def\tr{{\rm tr}\,}
\def\){{\right)}}
\def\({{\left(}}
\def\[{\left [}
\def\]{\right ]}
\def\Re{{\rm Re\,}}
\def\Im{{\rm Im\,}}
\def\diag{{\rm diag}}
\def\){ \right)}
\def\({\left(}
\def\conv{{\rm conv}}\def\co{{\conv}}
\def\C{{\mathbb{C}}}
\begin{document}
\openup .52\jot
\title{The generalized numerical range of a set of matrices}
\author{}

\author{Pan-Shun Lau, Chi-Kwong Li, Yiu-Tung Poon, and Nung-Sing Sze}
\date{}
\maketitle

\begin{abstract}
 For a given set of $n\times n$ matrices $\cF$, we study the union of the 
$C$-numerical ranges of the matrices in the set $\cF$, denoted by $W_C(\cF)$.
We obtain basic algebraic and topological properties of $W_C(\cF)$, and 
show that there are connections between the geometric properties of $W_C(\cF)$ and 
the algebraic  properties of $C$ and the matrices in $\cF$. 
Furthermore, we consider the starshapedness and convexity of the set $W_C(\cF)$.
In particular, we show that if $\cF$ is the convex hull of two matrices
such that $W_C(A)$ and $W_C(B)$ are convex, then the set $W_C(\cF)$
is star-shaped. We also investigate the extensions of the results to 
the joint $C$-numerical range of an $m$-tuple of matrices.
\end{abstract}

AMS classification. 15A60.

Keywords. Numerical range, convex set, star-shaped set. 

\section{Introduction}

Let $M_n$ be the set of all $n\times n$ complex matrices.
The numerical range of $A \in M_n$ is defined by
$$W(A) = \{x^*Ax: x\in \IC^n, x^*x = 1\},$$
which is a useful tool for studying matrices and operators; for example,
see \cite[Chapter 22]{Halmos} and \cite[Chapter 1]{HJ}. In particular, there is an interesting interplay 
between the geometrical properties of $W(A)$ and the algebraic 
properties of $A$.

In this paper, for a nonempty set $\cF$ of matrices in $M_n$, we consider   
$$W(\cF) = \cup \{ W(A): A \in \cF\}.$$
We show that there are also interesting connections
between the geometrical properties of 
$W(\cF)$ and the properties of the matrices in $\cF$.

The study has motivations from different topics. 
We mention two of them in the following.
The first one arises in the study of Crouzeix's conjecture
asserting that for any $A \in M_n$,
$$\|f(A)\| \le 2\max\{ |f(\mu)|: \mu \in W(A)\}$$
for any complex polynomial $f(z)$, where $\|A\|$ denotes 
the operator norm of $A$, see~\cite{MC}.
Instead of focusing on a single matrix $A \in M_n$, one may consider
a complex convex set $K$ and show that 
$$\|f(A)\| \le 2 \max\{ |f(\mu)|: \mu \in K\}$$
whenever $W(A) \subseteq K$.
One readily shows that this is equivalent to the Crouzeix's conjecture.
In fact, one may focus on the case when $K$ is a convex polygon (including 
interior) because  $W(A)$ can always be approximated by convex polygons
from inside or outside.

Another motivation comes  from quantum information science.
In quantum mechanics, a pure state in $M_n$ 
is a rank one orthogonal projection of the form $xx^*$ for some unit
vector $x \in \IC^n$, and a general state is a  density matrix, which is 
a convex combination of pure states.
For a measurement operator $A\in M_n$, which is usually Hermitian, 
the measurement of a state $P$ is computed by 
$\langle A, P\rangle = \tr(AP)$.
As a result, 
$$W(A) = 
\{ x^*Ax: x \in \IC^n, x^*x = 1\} = \{ \tr(Axx^*): x\in \IC^n, x^*x = 1\}$$
can be viewed as the set of all possible measurements on pure states
for a given measurement operator $A$. 
By the convexity of the numerical range, we have 
$$W(A) =  \{ \tr(Axx^*): x\in \IC^n, x^*x = 1\}
= \{\tr(AP): P \hbox{ is a general state}\}.$$
So, $W(A)$ actually contains all possible measurements on general states
for a given measurement operator $A$. If we consider all possible 
measurements under a set $\cF$ of measurement operators, then it is 
natural to study $W(\cF)$. In fact, if we know the set $W(\cF)$, we may deduce 
some properties about the measurement operators in $\cF$. 
For example, one can show that

(a) $W(\cF) = \{\mu\}$ if and only if $\cF = \{\mu I\}.$

(b) $W(\cF) \subseteq \IR$ if and only if all matrices in $\cF$ 
are Hermitian.

(c) $W(\cF) \subseteq [0, \infty)$ if and only if all matrices in 
$\cF$ are positive semi-definite.

\medskip
It is worth pointing out that if $A \in M_n$ is not Hermitian, one 
may consider the Hermitian decomposition $A = A_1 + iA_2$ such that 
$A_1, A_2$ are Hermitian, and identify $W(A)$ as the joint 
numerical range of $(A_1,A_2)$ defined by
$$W(A_1,A_2) = \{ (x^*A_1x, x^*A_2x): x \in \IC^n, x^*x = 1\} \subseteq \IR^2.$$
One can study the joint numerical range of $k$-tuple of 
Hermitian matrices $(A_1, \dots, A_k)$ defined by
$$W(A_1,\dots, A_k) = \{ (x^*A_1x, \dots, x^*A_kx): x \in \IC^n, x^*x = 1\} 
\subseteq \IR^k.$$
Accordingly, one may consider the joint numerical range $W(\cF)\subseteq \IR^k$ 
of a set $\cF$ of $k$-tuple of Hermitian matrices in $M_n$.

It turns out that we can study $W(\cF)$ under a more general setting.
For a matrix $C \in M_n$, the $C$-numerical range of $A \in M_n$
is defined by 
$$W_C(A) = \{\tr(CU^*AU): U \hbox{ is unitary }\},$$
which has been studied by many researchers in connection to different topics;
see \cite{Li} and its references.
In particular, the $C$-numerical range
is useful in the study of quantum control and quantum information;
for example, see \cite{Thomas}. 
Denote by $\{E_{11}, E_{12}, \dots,  E_{nn}\}$ the standard basis for $M_n$.
When $C = E_{11}$, $W_C(A)$ reduces to $W(A)$; if $C$ is a rank $m$ orthogonal
projection, then $W_C(A)$ reduces to the $m$th-numerical range of $A$;
see~\cite{Halmos,HJ,LP}. 
For a non-empty set $\cF$ of matrices in $M_n$, we consider
$$W_C(\cF) = \cup \{ W_C(A): A \in \cF\}.$$
In Section 2, we study the connection between the 
geometric properties of $W_C(\cF)$ and the properties of the set $\cF$.
In Sections 3 and 4, we study conditions for $W_C(\cF)$ to be star-shaped or convex.
In Section 5, we consider the joint $C$-numerical range of 
$(A_1, \dots, A_k) \in M_n^k$ defined by 
$$W_C(A_1, \dots, A_k) = \{\left(\tr(CU^*A_1U), \dots,\tr(CU^*A_kU)\right):
U \hbox{ is unitary} \} \subseteq \IC^k.$$
If $C, A_1, \dots, A_k$ are Hermitian matrices, then 
$W_C(A_1, \dots, A_k) \subseteq \IR^k$. 
One may see \cite{Li} for the background 
and references on the $C$-numerical range.

If $\mu \in \IC$ and  $C = \mu I$, then 
$$W_C(A) = \{\mu \tr A\} \quad \hbox{ and } \quad W_C(A_1, \dots, A_k) = 
\{\mu(\tr A_1, \dots, \tr A_k)\}.$$
We will always assume that $C$ is not a scalar matrix to avoid
trivial consideration.

For convenience of discussion, we always identify $\IC$ with $\IR^2$.

\section{Basic properties of $W_C(A)$ and $W_C(\cF)$}

We first list some basic results in the first two propositions below
about the $C$-numerical range;
see \cite{CT,Li,LT,NKTsing1984,Westwick1975} and their references. 

\begin{proposition}\label{P2.1}

Let $C, A \in M_n$.
\begin{enumerate}
\item[{\rm (a)}] For any unitary $U,V\in M_n$, $W_{V^*CV}(U^*AU)=W_C(A)= W_A(C)$.

\item[{\rm (b)}] For any $\alpha, \beta \in \IC$,
$$W_C(\alpha A + \beta I) = \alpha W_C(A) + \beta \tr C 
= \{ \alpha \mu + \beta \tr C: \mu \in W_C(A)\}.$$

\item[{\rm (c)}] The set $W_C(A)$ is compact.

\item[{\rm (d)}] The set $W_C(A)$ is star-shaped with 
$(\tr C)(\tr A)/n$ as a star center, i.e., 
$$t\mu+(1-t)(\tr C)(\tr A)/n\in W_C(A) \quad \hbox{
for any } \mu\in W_C(A) \hbox{  and }  t\in [0,1].$$

\item[{\rm (e)}] The set $W_C(A)$ is convex if 
there is $\gamma \in \IC$ such that $\tilde C = C - \gamma I_n$ satisfies any 
one of the following conditions.

{\rm (e.1)} $\tilde C$
is  rank one.

{\rm (e.2)} $\tilde C$ is a multiple of a Hermitian matrix.

{\rm (e.3)} $\tilde C$ is unitarily similar to a block matrix of the form
$(C_{ij})_{1\le i, j \le m}$
such 

\qquad that $C_{11}, \dots, C_{mm}$  are 
square matrices possibly with different sizes, and

\qquad $C_{ij} = 0$ if $j \ne i+1$.
\end{enumerate}

\end{proposition}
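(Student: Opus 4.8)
Parts (a)--(c) are formal. For (a) I would write $W_{V^*CV}(U^*AU)=\{\tr(V^*CV\,W^*U^*AUW):W\text{ unitary}\}$, push $V^*$ around by cyclicity of the trace, and observe that $W\mapsto UWV^*$ is a bijection of the unitary group; the identity $W_C(A)=W_A(C)$ is just $\tr(CU^*AU)=\tr(A\,UCU^*)$ after renaming $U\leftrightarrow U^*$. For (b), expand $\tr(CU^*(\alpha A+\beta I)U)=\alpha\tr(CU^*AU)+\beta\tr C$. For (c), $\mathcal U(n)$ is compact and $U\mapsto\tr(CU^*AU)$ is continuous, so $W_C(A)$ is a continuous image of a compact set.

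For (d) I would first normalize with (b): replacing $A$ by $A-\frac{\tr A}{n}I$ moves the claimed star center to the origin and makes $\tr A=0$, and (a) (applied with $V=I$, so that $W_C(U^*AU)=W_C(A)$) lets me take the point in question to be $\mu=\tr(CA)$ itself; the claim then becomes $s\,W_C(A)\subseteq W_C(A)$ for all $s\in[0,1]$. The mechanism is averaging conjugation over the $n^2$ shift-clock unitaries $W_{jk}=S^jD^k$, where $S$ is the cyclic shift and $D=\diag(1,\omega,\dots,\omega^{n-1})$ with $\omega=e^{2\pi i/n}$: this family satisfies $\frac{1}{n^2}\sum_{j,k}W_{jk}XW_{jk}^*=\frac{\tr X}{n}I$ for every $X$, so the origin is the barycenter of the $n^2$ points $\tr(CW_{jk}AW_{jk}^*)\in W_C(A)$. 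Combining this with the path-connectedness of $\mathcal U(n)$ (which yields continuous arcs inside $W_C(A)$ joining $\mu$ to each of those $n^2$ points) and an intermediate-value / winding-number argument gives the entire segment $[0,\mu]$. This is the Cheung--Tsing star-shapedness theorem \cite{CT}, and turning the averaging identity into containment of the whole segment is the technical heart; I would reproduce their scheme rather than hunt for a shortcut.

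For (e), by (b) I may take $\gamma=0$, so $C=\tilde C$, and each case becomes a convexity assertion for $W_C(A)$. In (e.2), $C=\beta H$ with $H$ Hermitian gives $W_C(A)=\beta W_H(A)$, which is convex by Westwick's theorem \cite{Westwick1975} on the Hermitian-weighted numerical range (a Toeplitz--Hausdorff-type continuity argument). In (e.1), a rank-one $C=xy^*$ gives $W_C(A)=\{v^*Au:\|u\|=\|x\|,\ \|v\|=\|y\|,\ v^*u=y^*x\}$; parametrizing $(u,v)$ and eliminating the free parameters exhibits this as an elliptical disc (degenerate cases included), hence convex --- the classical convexity of the rank-one $C$-numerical range; see \cite{Li}. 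Case (e.3) I would deduce from the earlier parts: a block-superdiagonal $\tilde C$ has zero main diagonal, so $\tr\tilde C=0$ and by (d) the set $W_{\tilde C}(A)$ is compact and star-shaped about $0$; moreover conjugating $\tilde C$ by the block-scalar unitary $\bigoplus_k e^{ik\theta}I_{n_k}$ sends $\tilde C$ to $e^{i\theta}\tilde C$ (it multiplies the only nonzero block-superdiagonal by $e^{i\theta}$), so by (a) and (b), $e^{i\theta}W_{\tilde C}(A)=W_{\tilde C}(A)$ for every $\theta$. A compact set that is star-shaped about $0$ and invariant under all rotations about $0$ is a closed disc centered at $0$, so $W_{\tilde C}(A)$ is convex. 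The only genuine obstacle in (e) is thus importing the classical inputs for (e.1) and (e.2); (e.3) falls out of (d) together with the rotation symmetry.
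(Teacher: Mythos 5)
The paper gives no proof of this proposition at all: it is presented as a list of known facts with pointers to \cite{CT,Li,LT,NKTsing1984,Westwick1975}, so your plan of verifying (a)--(c) directly and importing the classical inputs for (d) and (e.2) from Cheung--Tsing and Westwick is exactly in the spirit of the paper; your direct computations for (a)--(c) are fine. Your treatment of (e.3) is a correct self-contained addition: after reducing to $\gamma=0$ (which, strictly, uses $W_C(A)=W_A(C)$ from (a) together with (b), since (b) as stated translates $A$ rather than $C$), the block-shift form gives $\tr\tilde C=0$, conjugation by $\bigoplus_k e^{ik\theta}I_{n_k}$ multiplies $\tilde C$ by a unimodular scalar, so by (a) and (c) the compact set $W_{\tilde C}(A)$ is invariant under every rotation about $0$, and being star-shaped about $0$ by (d) it is a closed disc, hence convex; this is in substance the circular-symmetry argument of \cite{LT}, and it lets you deduce (e.3) from (d) rather than quoting it. The one genuine flaw is in your sketch for (e.1): the set $\{v^*Au:\ \|u\|=\|x\|,\ \|v\|=\|y\|,\ v^*u=y^*x\}$ is \emph{not} an elliptical disc in general --- taking $C=E_{11}$, i.e.\ $x=y=e_1$, it is the classical numerical range $W(A)$, which for $n\ge 3$ can be a polygon --- so ``eliminating the free parameters'' cannot exhibit an ellipse and does not prove convexity. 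The convexity of the rank-one case is precisely Tsing's theorem on the constrained bilinear range \cite{NKTsing1984} (one of the paper's cited sources), and you should rest (e.1) on that citation, exactly as you do for (e.2) with Westwick; with that repair the proposal is correct.
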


Researchers have extended the results on classical
numerical range, and showed that
there is interesting interplay between the geometry of $W_C(A)$ and 
the algebraic properties of $C$ and $A$. 
A boundary point $\nu$ of $W_C(A)$ is a corner point if 
there is $d > 0$ such that $W_C(A) \cap \{ \mu \in \IC: |\mu-\nu| \le d\}$
is contained in a pointed cone with $\nu$ as a vertex.

\begin{proposition} \label{P2.2}
Let $C = (c_{ij}) \in M_n$ be a non-scalar matrix in lower
triangular form with diagonal entries $c_1, \dots, c_n$. 
Suppose $A = (a_{ij}) \in M_n$ is also in lower triangular form with 
diagonal entries $a_1, \dots, a_n$.

\begin{enumerate}
\item[{\rm (a)}] If $\tr (CA) = \sum_{j=1}^n c_j a_j$ is a boundary point of 
$W_C(A)$, then $a_{rs} = 0$ whenever  $c_r \ne c_s$ 
and $c_{pq} = 0$ whenever $a_p \ne a_q$.
In particular, if $C$ and $A$ has distinct eigenvalues, then 
$C$ and $A$ will be in diagonal form, i.e., $C$ and $A$ are normal.

\item[{\rm (b)}] If $\nu$ is a corner point of $W_C(A)$, then there is a unitary $V$
such that $V^*AV$ is lower triangular form with diagonal entries 
$a_{j_1}, \dots, a_{j_n}$ such that
$(j_1, \dots, j_n)$ is a permutation of $(1, \dots, n)$ and   
$$\nu = \tr(CV^*AV) = \sum_{\ell=1}^n c_\ell a_{j_\ell}.$$

\item[{\rm (c)}] 
The set $W_C(A)$ is a singleton if and only if $A$ is a scalar matrix.

\item[{\rm (d)}] The set $W_C(A)$ is a non-degenerate line segment if and only if
$C$ and $A$ are non-scalar normal matrices having collinear eigenvalues in $\C$.

\item[{\rm (e)}] 
The set $W_C(A)$ is a convex polygon if and only if
$$W_C(A) = \conv \{ (c_1, \dots, c_n)P(a_1, \dots a_n)^t: P
\hbox{ is a permutation matrix}\}.$$
\end{enumerate}
\end{proposition}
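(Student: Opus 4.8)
The plan is to take (a) as the main technical input, to deduce (b) from it by exploiting the extra rigidity available at a corner, and then to read off (c), (d) and (e) from (a)--(b) together with one elementary fact: every \emph{permutation sum} $\sum_{\ell=1}^{n}c_\ell a_{j_\ell}$, with $(j_1,\dots,j_n)$ a permutation of $(1,\dots,n)$, lies in $W_C(A)$. It suffices to realise an adjacent transposition of the diagonal of $A$: by the $2\times2$ Schur theorem there is a $2\times2$ unitary $u$ with $u^*A_0u$ lower triangular of diagonal $(a_{i+1},a_i)$, where $A_0$ is the (lower triangular) principal submatrix of $A$ on rows and columns $i,i+1$; conjugating $A$ by $I\oplus u\oplus I$ keeps $A$ lower triangular (the block pattern allows no new nonzero entries above the diagonal) and swaps $a_i,a_{i+1}$ on the diagonal. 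Composing such conjugations reaches any permutation of $(a_1,\dots,a_n)$, and since $\tr(CM)=\sum_k c_k M_{kk}$ whenever $C$ and $M$ are both lower triangular, the resulting unitary $V$ satisfies $\tr(CV^*AV)=\sum_\ell c_\ell a_{j_\ell}$. In particular, if $W_C(A)$ is convex then the convex hull of the permutation sums is contained in $W_C(A)$.

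For (a), after multiplying $C$ by a suitable unimodular scalar (Proposition~\ref{P2.1}(b)) we may assume $\Re\tr(CU^*AU)\le\Re\tr(CA)$ for every unitary $U$, so that $U\mapsto\Re\tr(CU^*AU)$ attains a global maximum at $U=I$. Differentiating along $U=e^{itH}$, $H$ Hermitian, and using $\tr(C[A,H])=\tr([C,A]H)$, vanishing of the first derivative forces $\Im\tr([C,A]H)=0$ for all Hermitian $H$, i.e.\ $[C,A]$ is Hermitian; but $CA$ and $AC$ are lower triangular with the common diagonal $(c_1a_1,\dots,c_na_n)$, so $[C,A]$ is lower triangular with zero diagonal, and a Hermitian matrix of that shape is $0$. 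Hence $CA=AC$. The crux of (a) is to upgrade $CA=AC$ to ``$a_{rs}=0$ whenever $c_r\ne c_s$'' using that $\tr(CA)$ is genuinely a boundary point. This I would do with the second-order optimality condition $\Re\tr\big(C[[A,H],H]\big)\ge0$ for all Hermitian $H$, obtained from the second derivative: if $a_{rs}\ne0$ for some $r>s$ with $c_r\ne c_s$, then choosing $H$ supported on the relevant coordinates one finds that $\tr(CA)$ can be moved in two independent directions and so is not a boundary point (in the model case $n=2$ the relation $CA=AC$ together with a translation forces $C$ and $A$ to be scalar multiples of a common rank-one idempotent, for which a direct computation exhibits $\tr(CA)$ as an interior point of $W_C(A)$). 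The statement for $c_{pq}$ follows by the symmetry $W_C(A)=W_A(C)$ of Proposition~\ref{P2.1}(a), and the ``in particular'' clause is immediate. I expect this last step --- in general $n$, handling the intermediate indices $s<k<r$ while localising --- to be the main obstacle.

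For (b), let $\nu$ be a corner point, attained at a unitary $V_0$; put $B=V_0^*AV_0$, so $\nu=\tr(CB)$ and $B$ has eigenvalues $a_1,\dots,a_n$. Since $\nu$ is the vertex of a pointed cone it maximises $\Re(e^{i\theta}\mu)$ over $W_C(A)$ for all $\theta$ in an arc of positive length; rotating so that $0$ lies in that arc, the first-order argument of (a) gives $[C,B]$ Hermitian, and a second value $\theta'$ in the arc with $\sin\theta'\ne0$ forces, through $\Re\big(ie^{i\theta'}\tr([C,B]H)\big)=0$ for all Hermitian $H$, that $[C,B]=0$. Commuting matrices are simultaneously unitarily triangularisable, so there is a unitary $W$ with $W^*CW$ and $W^*BW$ both lower triangular, of diagonals $(c_{\tau(1)},\dots,c_{\tau(n)})$ and $(a_{\sigma(1)},\dots,a_{\sigma(n)})$; then $\nu=\tr\big((W^*CW)(W^*BW)\big)=\sum_k c_{\tau(k)}a_{\sigma(k)}=\sum_\ell c_\ell a_{j_\ell}$ with $j_\ell=\sigma(\tau^{-1}(\ell))$. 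Applying the $2\times2$ reordering of the first paragraph to $W^*BW=(V_0W)^*A(V_0W)$ then gives a unitary $V$ with $V^*AV$ lower triangular of diagonal $(a_{j_1},\dots,a_{j_n})$, whence $\tr(CV^*AV)=\sum_\ell c_\ell a_{j_\ell}=\nu$.

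The remaining parts are bookkeeping. (c): if $W_C(A)=\{\mu\}$ all permutation sums equal $\mu$, so $(c_r-c_s)(a_r-a_s)=0$ for all $r,s$ (transpositions); grouping indices by the value of $a_i$ and using that $C$ is non-scalar forces all $a_i$ equal, i.e.\ $A$ scalar, and the converse is Proposition~\ref{P2.1}(b). (d), ``if'': after the reductions of Proposition~\ref{P2.1}(b) we may take $C,A$ Hermitian and non-scalar, so $W_C(A)$ is the interval with endpoints $\sum_i c_i^{\downarrow}a_i^{\uparrow}$ and $\sum_i c_i^{\downarrow}a_i^{\downarrow}$, non-degenerate by the strict rearrangement inequality; rotating back gives a non-degenerate segment. ``Only if'': as a segment, $W_C(A)$ consists entirely of boundary points, so (a) applied to $C$ and to each lower triangular form of $A$ with a permuted diagonal --- the permutations realising arbitrary pairs of distinct eigenvalues of $A$, which exist since $A$ is non-scalar --- forces $c_{pq}=0$ for all $p\ne q$, i.e.\ $C$ diagonal, and symmetrically $A$ diagonal; finally, with $f(\pi)=\sum_i c_i a_{\pi(i)}$, $\bar c=\frac1n\sum_i c_i$, $\bar a=\frac1n\sum_i a_i$, averaging $|f(\pi)-\bar f|^2$ and $(f(\pi)-\bar f)^2$ over $\pi$ gives $\frac1{n-1}\big(\sum_i|c_i-\bar c|^2\big)\big(\sum_i|a_i-\bar a|^2\big)$ and $\frac1{n-1}\big(\sum_i(c_i-\bar c)^2\big)\big(\sum_i(a_i-\bar a)^2\big)$ respectively, and since the $f(\pi)$ are collinear but not all equal (by (c)) these have equal modulus, forcing $\big|\sum_i(c_i-\bar c)^2\big|=\sum_i|c_i-\bar c|^2$ and the analogue for $A$ --- precisely collinearity of the eigenvalues of $C$ and of $A$. (e): if $W_C(A)=\conv\{(c_1,\dots,c_n)P(a_1,\dots,a_n)^t:P\text{ a permutation matrix}\}$ it is a convex hull of finitely many points, hence a convex polygon; conversely, if $W_C(A)$ is a convex polygon its vertices are corner points, so by (b) each is a permutation sum, giving $W_C(A)=\conv(\text{vertices})\subseteq\conv\{\text{permutation sums}\}$, while the permutation sums lie in the convex set $W_C(A)$, giving the reverse inclusion.
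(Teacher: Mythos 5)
Note first that the paper offers no proof of this proposition: it is listed as known background, with the reader referred to \cite{CT,Li,LT,NKTsing1984,Westwick1975}. So your proposal must be judged on its own, and it has genuine gaps, the central one being in (a) --- which you yourself flag. The optimality framework you set up for (a) is not available: $W_C(A)$ is in general not convex, so a boundary point need not maximize $\Re(e^{i\theta}\,\cdot\,)$ over the unitary orbit, even locally (think of a point in a concave arc of the boundary, as for the deltoid-shaped $W_C(C)$ used in Section 4); hence you may not normalize to ``global maximum at $U=I$'', and the second-order condition $\Re\tr\big(C[[A,H],H]\big)\ge 0$ has no justification either. What a boundary point does give is that the differential of $U\mapsto\tr(CU^*AU)$ at $I$ is not surjective (otherwise the image contains a neighborhood of $\tr(CA)$), which yields that $e^{i\theta}[C,A]$ is Hermitian for some $\theta$, and then $[C,A]=0$ since $[C,A]$ is strictly lower triangular. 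But commutation is strictly weaker than the assertion of (a): $C=A=E_{11}+E_{21}$ commute, are lower triangular, and have $c_1\ne c_2$ with $a_{21}\ne 0$. The passage from $CA=AC$ to ``$a_{rs}=0$ whenever $c_r\ne c_s$'' is the whole content of (a), and it is missing; consequently the ``only if'' half of your (d), which invokes (a), is also incomplete. (By contrast, your (b) is sound: a corner point \emph{is} a local maximizer of $\Re(e^{i\theta}\,\cdot\,)$ for an arc of $\theta$'s, so the first-order argument legitimately gives $[C,V_0^*AV_0]=0$, and the simultaneous triangularization plus the diagonal-reordering lemma finishes it; (e) then follows as you say.)

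There is a second, recurring gap: you conflate ``non-scalar'' with ``having two distinct eigenvalues (diagonal entries)''. In (c), the permutation sums only see the diagonal of $A$, so from $(c_r-c_s)(a_r-a_s)=0$ for all $r,s$ you can conclude only that all $c_j$ are equal or all $a_j$ are equal; a non-scalar lower triangular $C$ may well have constant diagonal, so the first alternative is not excluded, and even ``all $a_j$ equal'' does not give $A$ scalar: for $A=E_{21}$ every permutation sum is $0$, yet $W_C(A)$ is a disk of positive radius (Example~\ref{E2.3}(b)). A correct argument must handle the nilpotent part, e.g.\ if $W_C(A)$ is a singleton then the differential of the orbit map vanishes at every $U$, so $[C,U^*AU]=0$ for all unitary $U$, and the span of the unitary orbit of a non-scalar $A$ forces $C$ to be scalar, a contradiction. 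The same conflation reappears in your ``only if'' argument for (d), where $A$ non-scalar with a single eigenvalue (i.e.\ $A=\lambda I+N$, $N\ne0$ nilpotent) makes all permutation sums coincide and the argument collapses; this case must be excluded by other means. The pieces that are correct and complete are the reordering lemma of your first paragraph, parts (b) and (e), the ``if'' half of (d), and the neat averaging-over-permutations argument for collinearity of the eigenvalues in (d).
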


Now, we extend the basic properties of $W_C(A)$ to $W_C(\cF)$.
We always assume that $C$ is not a scalar matrix and  $\cF$ is a non-empty subset of $M_n$.

\begin{theorem} \label{T3.1}
Suppose $C \in M_n$ is non-scalar and $\cF \subseteq M_n$ is non-empty.
\begin{enumerate}
\item[{\rm (a)}] If $U \in M_n$ is unitary, then
$W_C(\cF) = W_C(U^*\cF U),$ where $U^*\cF U = \{ U^*AU: A \in \cF \}.$

\item[{\rm (b)}] For any $\alpha, \beta \in \IC$, let
$\alpha \cF + \beta I = \{\alpha A + \beta I: A \in \cF\}$.
Then
$$W_C(\alpha \cF + \beta I) = \alpha W_C(\cF) + \beta \tr C
= \{ \alpha \mu + \beta\tr C : \mu \in W_C(\cF)\}.$$ 

\item[{\rm (c)}] If $\cF$ is bounded, then so is $W_C(\cF)$.
\item[{\rm (d)}] If $\cF$ is connected, then so is $W_C(\cF)$.
\item[{\rm (e)}] If $\cF$ is compact, then so is $W_C(\cF)$.
\end{enumerate}
\end{theorem}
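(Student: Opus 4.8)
The plan is to reduce parts (a) and (b) directly to the single‑matrix statements in Proposition~\ref{P2.1} by taking unions over $A\in\cF$, and to obtain parts (c), (d), (e) uniformly from a single continuous map whose image is $W_C(\cF)$.

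For (a), Proposition~\ref{P2.1}(a) with $V=I$ gives $W_C(U^*AU)=W_C(A)$ for every $A\in M_n$; taking the union over $A\in\cF$ yields
$W_C(U^*\cF U)=\bigcup_{A\in\cF}W_C(U^*AU)=\bigcup_{A\in\cF}W_C(A)=W_C(\cF)$.
For (b), Proposition~\ref{P2.1}(b) gives $W_C(\alpha A+\beta I)=\alpha W_C(A)+\beta\tr C$ for each $A$, and since $\bigcup_A(\alpha S_A+v)=\alpha\big(\bigcup_A S_A\big)+v$ for any family of sets $S_A$ and any fixed $v$, taking the union over $A\in\cF$ gives $W_C(\alpha\cF+\beta I)=\alpha W_C(\cF)+\beta\tr C$. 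Both of these are purely formal once the corresponding one‑matrix facts are in hand.

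For (c)–(e) I would introduce the map $\Phi:M_n\times\cU_n\to\IC$, $\Phi(A,U)=\tr(CU^*AU)$, where $\cU_n$ denotes the group of $n\times n$ unitary matrices. Since $U\mapsto U^*$, matrix multiplication, and $\tr$ are all continuous, $\Phi$ is continuous, and by definition $W_C(\cF)=\Phi(\cF\times\cU_n)$. Now I would record the two standard facts that $\cU_n$ is compact and path‑connected. Then: if $\cF$ is connected, $\cF\times\cU_n$ is connected, hence its continuous image $W_C(\cF)$ is connected, giving (d); if $\cF$ is compact, $\cF\times\cU_n$ is compact, hence $W_C(\cF)$ is compact, giving (e). For (c), note that a bounded subset $\cF$ of the finite‑dimensional space $M_n$ has compact closure $\mathrm{cl}(\cF)$, so $\Phi(\mathrm{cl}(\cF)\times\cU_n)$ is compact, hence bounded, and $W_C(\cF)\subseteq\Phi(\mathrm{cl}(\cF)\times\cU_n)$ is bounded; alternatively one can simply use the Cauchy–Schwarz estimate $|\tr(CU^*AU)|\le\|C\|_F\,\|A\|_F$ with $\|X\|_F=\sqrt{\tr(X^*X)}$, together with a uniform bound on $\|A\|_F$ over $A\in\cF$.

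I do not expect a genuine obstacle in this proof; it is essentially an exercise in transporting properties through a continuous map. The only points deserving a word of care are that \emph{boundedness} in (c), unlike compactness in (e), is not literally the continuous image of a compact set, so one should pass to $\mathrm{cl}(\cF)$ (or invoke the norm bound), and that the facts "$\cU_n$ is compact" and "$\cU_n$ is connected" should be stated explicitly before they are used. It may also be worth remarking that when $\cF$ is only bounded, $W_C(\cF)$ need not be closed, so (c) is stated for boundedness rather than compactness.
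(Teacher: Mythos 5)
Your proposal is correct, and parts (a), (b), (c) are in the same spirit as the paper (the paper dismisses (a), (b) as readily verified and proves (c) by the direct estimate $|\tr(CU^*AU)|\le n\|C\|\,\|A\|$, essentially your norm-bound alternative). Where you genuinely diverge is in (d) and, to a lesser extent, (e): you treat both uniformly by observing that $W_C(\cF)=\Phi(\cF\times\cU_n)$ for the continuous map $\Phi(A,U)=\tr(CU^*AU)$, and then invoke compactness and connectedness of $\cU_n$ together with the facts that products and continuous images preserve these properties. The paper instead proves (d) by exploiting the star-shapedness of each $W_C(A)$ with star center $(\tr C)(\tr A)/n$ (Proposition~\ref{P2.1}(d), the Cheung--Tsing theorem): any two points of $W_C(\cF)$ are joined by segments to their respective star centers, and the set of star centers $\{(\tr C)(\tr A)/n: A\in\cF\}$ is a connected continuous image of $\cF$; and it proves (e) by a sequential extraction of convergent subsequences $A_{j_k}\to A_0$, $U_{j(\ell)}\to U_0$, which is just the concrete form of your ``continuous image of a compact product'' argument. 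Your route is cleaner and more economical for (d), since it needs only path-connectedness of $\cU_n$ rather than the star-shapedness theorem; the paper's route fits its later emphasis on star centers and makes the connecting paths explicit. Your cautionary remarks about (c) (passing to $\mathrm{cl}(\cF)$ or using the norm bound, since a bounded set is not compact) are exactly the right points of care, and there is no gap in the argument.
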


\begin{proof}
(a) and (b) can be verified readily.

(c) If $\cF$ is bounded so that there is $R > 0$ such that 
for every $A \in M_n$ we have $\|A\| < R$, then 
$$|\tr(CU^*AU)| \le n\|C\| \|A\| < n\|C\| R.$$
Thus, $W_C(\cF)$ is bounded.

(d)
Note that for any $A \in M_n$, 
$W_C(A)$ is star-shaped with $(\tr C)(\tr A)/n$ as a star center.
If $\mu_1 = \tr(CU^*A_1U)$ and $\mu_2 = \tr(CV^*A_2V)$ with 
$A_1, A_2 \in \cF$ and $U,V$ unitary, then there are a line segment with end points
$\mu_1$ and $(\tr C)(\tr A_1)/n$,
and another line segment with end points
$\mu_2$ and $(\tr C)(\tr A_2)/n$.
If $\cF$ is connected, then so are the sets $\{\tr A: A \in \cF\}$ 
and $\{(\tr A)(\tr C)/n: A \in \cF\}.$
Hence, there is a path joining $\mu_1$ to $(\tr C)(\tr A_1)/n$,  then 
to $(\tr C)(\tr A_2)/n$ and then to $\mu_2$.

(e) Suppose $\cF$ is compact. Then $\cF$ is bounded and closed.
By (c), $W_C(\cF)$ is also bounded. To show that $W_C(\cF)$ is 
closed, let  $\{\tr(CU_k^*A_kU_k): k= 1,2,\dots\}$ be a  sequence
in $W_C(\cF)$ converging to $\mu_0 \in \IC$, 
where $A_k \in \cF$ and $U_k$ is unitary for each $k$.
Since $\cF$ is compact,  
there is a subsequence $\{A_{j_k}: k = 1,2,\dots\}$
of $\{A_k: k = 1,2,\dots\}$ converging to $A_0 \in \cF$. 
We can further consider a subsequence 
$\{U_{j(\ell)}: \ell = 1,2,\dots\}$ of 
$\{U_{j_k}: k = 1,2,\dots\}$ converging to $U_0$. Thus 
$$\{ \tr(CU_{j(\ell)}^* A_{j(\ell)} U_{j(\ell)}): k = 1,2,\dots\} \rightarrow
\tr(CU_0^*A_0U_0) = \mu_0\in W_C(\cF)\,.$$
Thus, $W_C(\cF)$ is closed. As a result, $W_C(\cF)$ is compact.  
\end{proof}

The following examples show that
none of the converses of the assertions in 
\Cref{T3.1} (c) -- (e) is valid, and there are no implications between 
the conditions that ``$\cF$ is closed'' and 
``$W_C(\cF)$'' is closed.

\begin{example} \label{E2.3} \rm
\begin{enumerate} 
\item[(a)] Suppose $C\in M_n$ is non-scalar
and has trace zero, and $\cF = \{\mu I: \mu \in \IC\}$.
Then $W_C(\cF) = \{0\}$ is bounded and compact, 
but $\cF$ is not bounded.

\item[(b)] Suppose $C \in M_n$ is non-scalar,
and $\cF = \{A_0, A_1\}$ such that
$A_0 = 0$ and  
$A_1 = xy^*$ for a pair of orthonormal vectors $x, y$.
Then $W_C(A_0) = \{0\}$ and 
 $W_C(\cF) = W_C(A_1)$ is a circular disk center at the origin with radius 
$$R = \max\{ |u^*Cv|:  \{u,v\} \hbox{ is an orthonormal set} \}.$$
Thus, $W_C(\cF)$ is connected,
but $\cF$ is not.

\item[(c)] Let $\cF = \conv \cG$ with
$$\cG = \{2E_{12}\} \cup \{\diag(e^{ir}, e^{-ir}): r \hbox{ is a
rational number}\}.$$
Then $\cF$ is not closed but $W(\cF) = W(2E_{12})$ is closed.

\item[(d)]  Let $\cF =\left \{\diag\(0,x+\dfrac{i}{x}\):x>0\right\}\cup\{\diag(0,0)\}$. Then $\cF$ is closed, but 
$$W(\cF)=\{x+iy:x,\ y>0,\ xy\le 1\}\cup \{0\}$$
 is not closed. 
\end{enumerate}
\end{example}

Next, we consider the connection between the geometrical properties of 
$W_C(\cF)$ and the properties of $C$ and $\cF$.
Note that for any subset $\cS$ of $\IC$, if $\tr C \ne 0$ and $\cF = \{\mu I/\tr C : \mu \in \cS\}$, then we have 
$W_C(\cF) = \cS$.
Thus,  the geometrical shape of $W_C(\cF)$ may be quite arbitrary.
Also, if $C = \mu I$ is a scalar matrix, then 
$W_C(\cF) = \{\mu \tr A: A \in \cF\}$. Again, $W_C(\cF)$ does not 
contain much information about the matrices in $\cF$. 
Nevertheless, we have the following.

\begin{theorem} Suppose $C \in M_n$ is non-scalar, 
and $\cF \subseteq M_n$ is non-empty.
The following conditions hold.
\begin{enumerate}
\item[{\rm (a)}] The set $W_C(\cF) = \{\mu\}$ 
if and only if $\cF = \{\nu I:\nu \tr C = \mu\}$.

\item[{\rm (b)}] The set $W_C(\cF)$ is a subset of a straight line $L$
if and only if 

{\rm (i)} $\cF \subseteq\{\nu I: \nu \in \IC, \nu \tr C \in L\}$, or 

{\rm (ii)} there are complex units $\alpha, \gamma \in \IC$ such that

\quad  $\gamma(C-(\tr C) I/n)$ and $\alpha(A-(\tr A)I/n)$ 
are Hermitian 
for all $A \in \cF$,

\quad and $\{(\tr C)(\tr A)/n: A\in\cF\}$ is collinear.

\item[{\rm (c)}] The set $W_C(\cF)$ is a convex
polygon if and only if $W_C(\cF)
=\co\{v_1,\dots,v_m\}$ where each $v_j$ is of
the forms $(c_1, \dots, c_n)(a_1, \dots, a_n)^t$.
Here $c_1, \dots, c_n$ are eigenvalues of $C$, and 
$a_1, \dots, a_n$ are eigenvalues of some $A_j \in\cF$.

\end{enumerate}
\end{theorem}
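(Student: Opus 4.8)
The plan is to treat the three parts in order, leveraging the single-matrix results of Propositions~\ref{P2.1} and~\ref{P2.2} together with Theorem~\ref{T3.1}. For part (a): the ``if'' direction is immediate since $W_C(\nu I) = \{\nu \tr C\} = \{\mu\}$ for each such $\nu I$. For the ``only if'' direction, suppose $W_C(\cF) = \{\mu\}$. Then for every $A \in \cF$ we have $W_C(A) = \{\mu\}$, a singleton, so by Proposition~\ref{P2.2}(c) each $A$ is a scalar matrix, say $A = \nu I$; and $W_C(\nu I) = \{\nu \tr C\} = \{\mu\}$ forces $\nu \tr C = \mu$. Hence $\cF \subseteq \{\nu I : \nu \tr C = \mu\}$, and since each such $\nu I$ must actually occur (else $W_C(\cF)$ would not contain $\mu$ — but it must contain it and only it) we get equality. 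Note that if $\tr C = 0$ this set is nonempty only when $\mu = 0$, in which case it is $\{\nu I : \nu \in \IC\}$; this is consistent with Example~\ref{E2.3}(a).

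For part (b), the ``if'' direction: in case (i) every $W_C(\nu I) = \{\nu \tr C\}$ lies on $L$ by hypothesis; in case (ii), writing $\tilde C = \gamma(C - (\tr C)I/n)$ and $\tilde A = \alpha(A - (\tr A)I/n)$, a computation using Proposition~\ref{P2.1}(b) and the trace identity $\tr(CU^*AU) = \tr((\tilde C/\gamma + (\tr C)I/n)(U^*(\tilde A/\alpha + (\tr A)I/n)U))$ shows
$$\tr(CU^*AU) = \frac{1}{\alpha\gamma}\tr(\tilde C U^* \tilde A U) + \frac{(\tr C)(\tr A)}{n},$$
and since $\tilde C, \tilde A$ are Hermitian, $\tr(\tilde C U^* \tilde A U)$ is real; so $W_C(A)$ lies on the line through $(\tr C)(\tr A)/n$ in direction $1/(\alpha\gamma)$. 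Because all the points $(\tr C)(\tr A)/n$ are collinear and one checks this common line has the same direction $1/(\alpha\gamma)$, the union $W_C(\cF)$ lies on a single line. The ``only if'' direction is the substantive one: assuming $W_C(\cF) \subseteq L$, each $W_C(A) \subseteq L$, so each $W_C(A)$ is either a singleton or a non-degenerate segment. If some $W_C(A)$ is a non-degenerate segment, Proposition~\ref{P2.2}(d) tells us $C$ and that $A$ are non-scalar normal with collinear eigenvalues; this pins down the direction of $L$ and, via Proposition~\ref{P2.1}(a)--(b), forces the stated Hermitian-up-to-shift-and-rotation structure on $C$. Then every other $B \in \cF$ must have $W_C(B) \subseteq L$ with the same direction, which (again by Proposition~\ref{P2.2}(d) applied to the non-scalar normal $C$) forces $B$ to be scalar or normal with eigenvalues aligned appropriately, giving case (ii); if instead every $W_C(A)$ is a singleton, Proposition~\ref{P2.2}(c) gives case (i).

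For part (c), the ``if'' direction is clear: a convex hull of finitely many points is a convex polygon (allowing degenerate cases). For ``only if'', suppose $W_C(\cF)$ is a convex polygon $P$ with vertices $v_1, \dots, v_m$. Each vertex $v_j$ is an extreme point of $P$, hence a corner point of $P$; since $v_j \in W_C(\cF)$, there is $A_j \in \cF$ with $v_j \in W_C(A_j)$, and $v_j$ is then necessarily a corner point of $W_C(A_j)$ (as $W_C(A_j) \subseteq P$ and $v_j$ lies at a vertex). Apply Proposition~\ref{P2.2}(b): there is a unitary $V$ with $V^*A_jV$ lower triangular and $v_j = \tr(CV^*A_jV) = \sum_\ell c_\ell a_{j_\ell}$ where $c_1, \dots, c_n$ are the eigenvalues of $C$ (using Proposition~\ref{P2.1}(a) to put $C$ in triangular form simultaneously) and $a_1, \dots, a_n$ are the eigenvalues of $A_j$; this is exactly $(c_1, \dots, c_n)P_j(a_1, \dots, a_n)^t$ for a permutation matrix $P_j$. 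Since $P = \co\{v_1, \dots, v_m\}$, we are done.

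The main obstacle is the ``only if'' direction of part (b): extracting, from the mere fact that $W_C(\cF)$ lies on a line, the precise normalization — a common complex unit $\gamma$ rotating $C - (\tr C)I/n$ to Hermitian and a common unit $\alpha$ doing the same for all $A \in \cF$ — requires carefully combining Proposition~\ref{P2.2}(d) for one ``good'' member of $\cF$ with a direction-matching argument to propagate the structure to every other member, and handling the bookkeeping when $\tr C = 0$ or when various traces coincide. The case split (is there a non-scalar member whose $W_C$ is a genuine segment, or not?) must be organized so that the singleton sub-case cleanly lands in (i) and the segment sub-case cleanly lands in (ii).
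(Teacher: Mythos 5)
Your parts (a) and (c) follow the paper's proof essentially verbatim: (a) reduces to the single-matrix fact that $W_C(A)=\{\mu\}$ iff $A=\nu I$ with $\nu\tr C=\mu$, and (c) identifies each vertex of the polygon as a corner point of some $W_C(A_j)$ and invokes \Cref{P2.2}(b). (Your parenthetical argument for set \emph{equality} in (a) does not actually work when $\tr C=0$, since a proper subset of the scalar matrices already gives $W_C(\cF)=\{0\}$; but the paper's own statement and one-line proof share this looseness, so I only flag it.) The substantive divergence is in the necessity direction of (b), precisely the step you yourself call the main obstacle: having found one non-scalar $A\in\cF$ and units $\gamma,\alpha$ via \Cref{P2.2}(d), you must show the \emph{same} $\alpha$ works for every other $B\in\cF$. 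You assert that $W_C(B)\subseteq L$ "with the same direction" forces $B$ to be "aligned appropriately," but the direction-matching is never carried out, so as written this is a plan rather than a proof. The paper closes this step with a concrete trick worth comparing: put $\tilde C=\gamma(C-(\tr C)I/n)$, which is Hermitian, traceless and non-scalar; by \Cref{P2.1}(b), $W_{\tilde C}(\alpha(B-(\tr B)I/n))=\gamma\alpha W_C(B)+\mu_B$ lies on a line parallel to the one containing $W_{\tilde C}(\alpha(A-(\tr A)I/n))\subseteq\IR$, and since $\tr\tilde C=0$ the star-center $0$ belongs to it, so it lies in $\IR$ itself; with $\tilde C$ Hermitian non-scalar and the matrix traceless this forces $\alpha(B-(\tr B)I/n)$ to be Hermitian. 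Your alternative can be completed — writing $C=c_0I+e^{i\theta_C}H$ and $B=b_0I+e^{i\theta_B}K$ with $H,K$ Hermitian, the non-constant part of $\tr(CU^*BU)$ is $e^{i(\theta_C+\theta_B)}\tr(HU^*KU)$, so the direction of the segment $W_C(B)\subseteq L$ determines $\theta_B$ modulo $\pi$ once $\theta_C$ and $L$ are fixed, which is exactly "the same $\alpha$ up to sign" — but that computation (or the paper's) has to appear for the argument to be complete. You also omit, though it is trivial, the observation that the centroids $(\tr C)(\tr A)/n$ are collinear because each is a star-center of $W_C(A)\subseteq L$.

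One caution on your sufficiency argument for (ii): the claim "one checks this common line has the same direction $1/(\alpha\gamma)$" is not a consequence of (ii) as literally stated — collinearity of the centroids alone does not fix the direction of the line through them, and without that parallelism the union of the parallel segments need not lie on one line. The paper hides the same point behind "the sufficiency can be verified readily," so you are not worse off than the source, but you should not present as a routine check something that in fact requires reading (ii) as saying the centroids lie on a line in the direction $\overline{\gamma\alpha}$ (which is what the necessity argument actually produces).
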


\begin{proof}
Condition (a) follows from the 
fact that $W_C(A) = \{\mu\}$ 
if and only if $A= \nu I$ with $\nu \tr C = \mu$.

(b) Suppose $W_C(\cF) \subseteq L$. If $\cF \subseteq \{\mu I: \mu \in \IC\}$,  
then clearly $\cF \subseteq \{\nu I: \nu \tr C \in L\}.$ 
Let $\cF$ contains a non-scalar matrix $A$.
Then $W_C(A)$ must be a non-degenerate line segment contained in $L$. 
Therefore $C,A$ are normal with collinear eigenvalues in $\IC$, see \cite[(7.3)]{Li}. There exist complex units $\alpha, \gamma \in \IC$ such that
$\gamma(C-(\tr C) I/n)$ and $\alpha(A-(\tr A)I/n)$ are Hermitian. If $B\in\cF$ is a scalar matrix, then $\alpha(B-(\tr B)I/n)=0$ which is Hermitian. Now assume $B\in\cF$ is non-scalar. 
Let $\tilde C = \gamma(C-(\tr C) I/n)$. Then
$$W_{\tilde C}(\alpha(B-(\tr B)I/n))=
\gamma\alpha W_C(B)+\mu_B\subseteq\{\gamma\alpha z+\mu_B:z\in L \},$$ 
for some constant $\mu_B\in\IC$, and   
$$  W_{\tilde C}(\alpha(A-(\tr A)I/n))=
\gamma\alpha W_C(A)+\mu_A\subseteq\{\gamma\alpha z+\mu_A:z\in L \},$$ 
for some constant $\mu_A\in\IC$. Hence $W_{\tilde C}(\alpha(B-(\tr B)I/n))$ is a subset of a 
line segment parallel to $W_{\tilde C}(\alpha(A-(\tr A)I/n))\subseteq \IR$. As 
$0\in  W_{\tilde C}(\alpha(B-(\tr B)I/n))$, we have 
$ W_{\tilde C}(\alpha(B-(\tr B)I/n))\subseteq \IR$. 
Therefore, $\alpha(B-(\tr B)I/n)$ is Hermitian. The last assertion follows from 
$\{(\tr C)(\tr A)/n: A\in\cF\}\subseteq W_C(\cF)\subseteq L$. 
The sufficiency can be verified readily.

(c) Suppose $W_C(\cF) = \conv\{ v_1, \dots, v_m\}$ is a convex polygon.
Then for every $v_j$, there is $A_j \in \cF$ such that 
$v_j = \tr(CU_j^*A_jU_j)\in W_C(A_j)$ for some unitary $U_j \in M_n$.
Since $W_C(A_j) \subseteq W_C(\cF)$, we see that $v_j$ is a vertex point of $W_C(A_j)$. 
It follows that $v_j$ has the form $(c_1, \dots, c_n)(a_1, \dots, a_n)^t$,
where $c_1, \dots, c_n$ are eigenvalues of $C$ and $a_1, \dots, a_n$ are 
eigenvalues of $A_j$ arranged in some suitable order. The converse of the
assertion is clear.
\end{proof}

\section{Star-shapedness and Convexity}

In this section, we study the star-shapedness and convexity of $W_C(\cF)$.
If $\cF$ is not connected,  then $W_C(\cF)$  may not be connected so that 
$W_C(\cF)$ is not star-shaped or convex.
One might 
hope that if $\cF$ is star-shaped or convex, then $W_C(\cF)$ will inherit the 
properties.
However, the following examples show that $W_C(\cF)$ may fail to be convex (star-shaped, resp.) even if $\cF$ is convex (star-shaped, resp.).

 
\begin{example}\label{exam24}
Let $C=E_{11}$, $A = \diag(1+i, 1-i)$ and $\cF= \conv\{A, -A\}$.
Then
$$W_C(\cF)=W(\cF) = \bigcup_{t \in [0,1]} W(tA + (1-t)(-A)) =  
\bigcup_{s \in [-1,1]} sW(A).$$
As $W(A)=\conv\{1+i,1-i\}$, we have $W_C(\cF)=\conv\{0,1+i,1-i\} \cup \conv \{0, -1-i, -1+i\}$ which is not convex, see \Cref{fig1}.

\begin{figure}[h!]
\centering
\includegraphics[height=3.3cm]{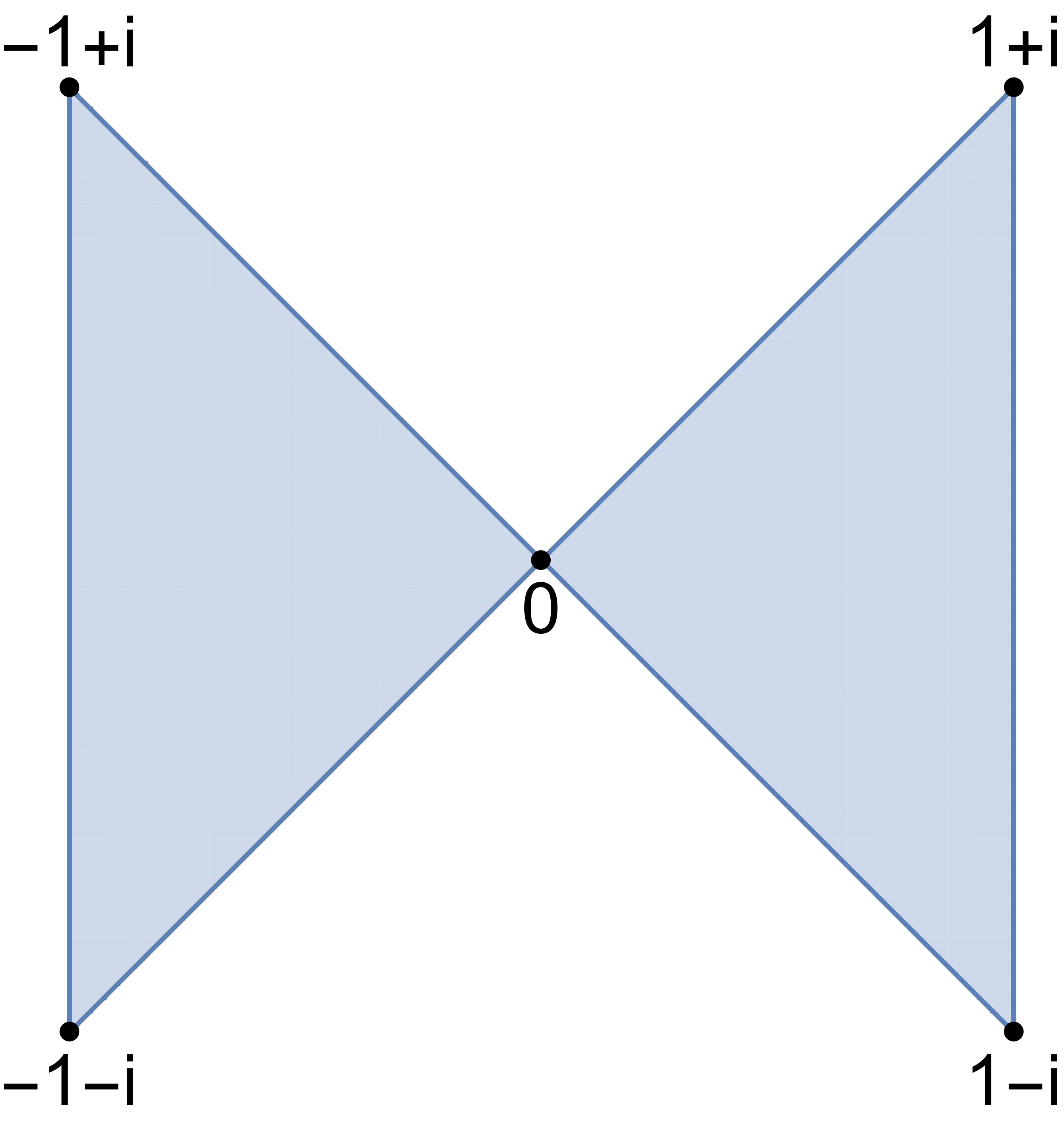}
\caption{}
\label{fig1}
\end{figure}
\vspace{-.7cm}
\end{example}

\begin{example}\label{exam25}
Let $C=E_{11}$, $A = \diag(1+i, 1-i)$, $\cF_1= \conv\{A, -A\}$ and $\cF_2=\conv\{A, -A+4I\}$.
Then $\cF=\cF_1\cup \cF_2$ is star-shaped with star-center $A$. Since $tA+(1-t)(-A+4I)=(1-2t)(2I-A)+2I$, we have
$$W(\cF_2) =\bigcup_{t \in [0,1]} W((1-2t)(-A+2I)+2I) =  
\bigcup_{s \in [-1,1]} sW(-A+2I)+2.$$
Note that $W(-A+2I)=\conv\{1+i,1-i\}=W(A)$. Then $W(\cF_2)=W(\cF_1)+2$ and
$$W_C(\cF)=W(\cF_1\cup \cF_2) = W(\cF_1)\cup W(\cF_2)$$
equals $\conv\{0,-1+i,-1-i\} \cup \conv \{0, 2, 1-i, 1+i\}\cup \conv\{2,3+i,3-i\}$ which is not star-shaped, see \Cref{fig2}. 

\begin{figure}[h!]
  \centering
    \includegraphics[height=3.3cm]{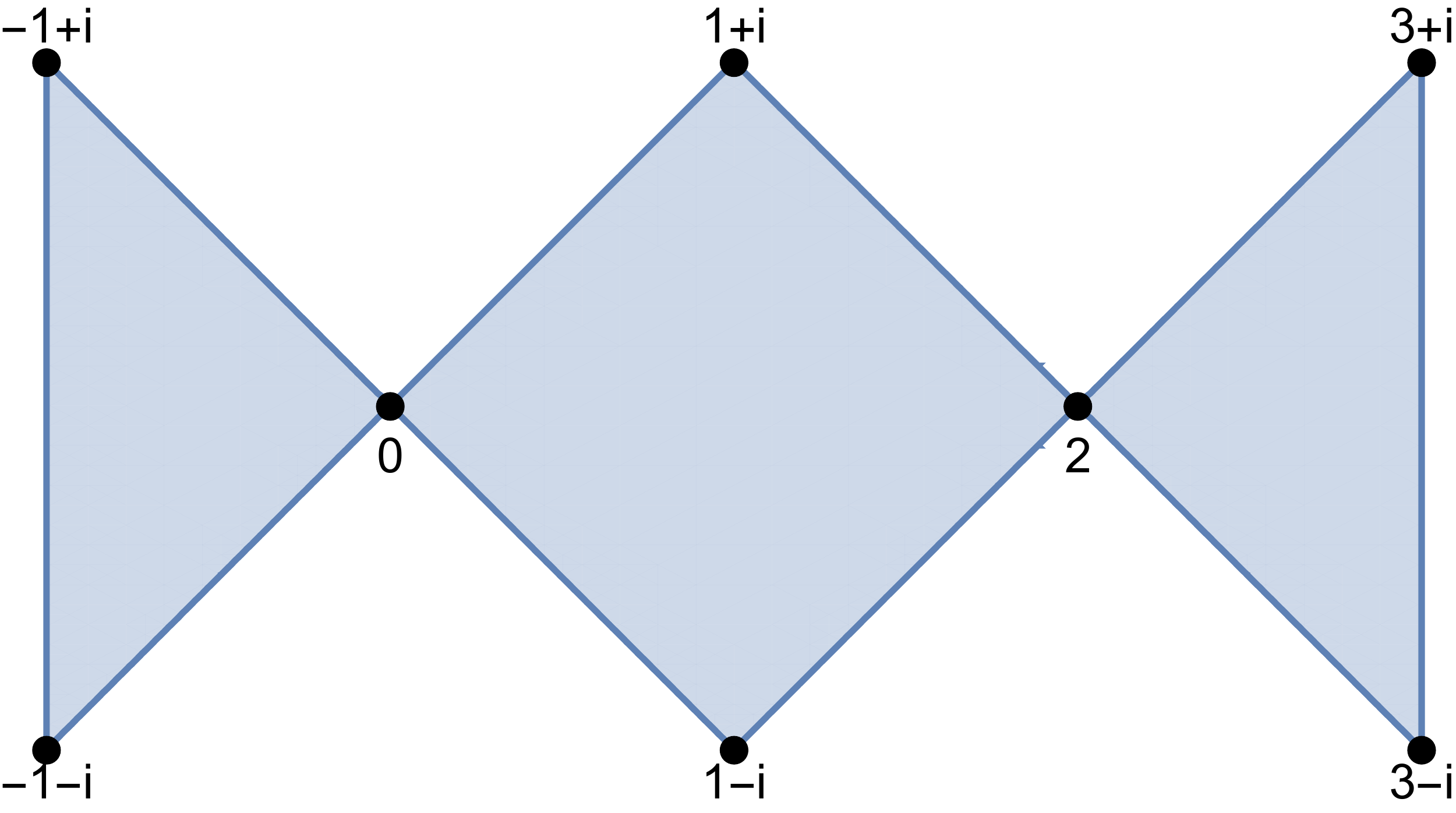} 
 \caption{} 
  \label{fig2}
\end{figure}
\end{example}

Notice that in \Cref{exam24}, $\cF$ is convex and $W_C(\cF)$ is star-shaped with 
star-center at the origin. One may ask if $W_C(\cF)$
is always star-shaped for a convex set $\cF$.
We will study this question in the following, and pay special attention on 
the case where $\cF = \conv\{A_1, \dots, A_m\}$
for some $A_1, \dots, A_m\in M_n$.

Denote by $S_C(A)$ the set of all star-centers 
of $W_C(A)$ and $\cU_n$ the group of all $n\times n$ unitary matrices. 
We begin with the following result showing that $W_C(\cF)$ is star-shaped if
$C$ or $\cF$ satisfies some special properties.

\begin{proposition} \label{2.1.2}
Suppose $C\in M_n$ and $\cF$ is a convex matrix set.
\begin{enumerate}
\item[ {\rm (a)}] If $\cF$ contains a scalar matrix $\mu I$, then
$W_C(\cF)$ is star-shaped with $\mu\tr C$ as a star center.
\item[ {\rm (b)}] Suppose
the intersection of all (or any three of)
$S_C(A)$ with $A \in \cF$ is nonempty. Then
$W_C(\cF)$ is star-shaped with $\mu$ as a star center for any
$\mu \in \cap\{S_C(A): A \in \cF\}$.
\item[ {\rm (c)}] If $\tr C=0$, then $W_C(\cF)$ 
is star-shaped with 0 as a star-center.
\item[ {\rm (d)}] If all matrices in $\cF$
have the same trace $\nu$, then $W_C(\cF)$ is star-shaped with 
$\nu \tr C$ as a star-center.
\end{enumerate}
\end{proposition}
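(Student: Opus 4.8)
The common mechanism behind all four parts is the following reduction: to show that $W_C(\cF)$ is star-shaped with center $\mu$, it suffices to check that $\mu\in W_C(\cF)$ and that for every $\nu\in W_C(\cF)$ the segment $[\mu,\nu]$ lies in $W_C(\cF)$. Since any such $\nu$ lies in $W_C(A)$ for some $A\in\cF$, I would establish the segment condition in two ways depending on the part: for (b), (c), (d) by showing $\mu$ is already a star-center of that same $W_C(A)$; for (a) by exhibiting a one-parameter family of matrices in $\cF$ whose $C$-numerical ranges sweep out the segment. Note that (b)--(d) will not actually use convexity of $\cF$; only (a) does.

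For part (b), I would first record that for each $A\in\cF$ the star-center set $S_C(A)$ is a nonempty compact convex subset of $\IC\cong\IR^2$: it is nonempty since it contains $(\tr C)(\tr A)/n$ by \Cref{P2.1}(d); it is the kernel of the set $W_C(A)$, and the kernel of any subset of $\IR^2$ is convex, while the kernel of a closed set is closed, so $S_C(A)$ is closed (by compactness of $W_C(A)$, \Cref{P2.1}(c)) and bounded, hence compact. Consequently, by Helly's theorem for compact convex sets in the plane, if every three of the sets $\{S_C(A):A\in\cF\}$ meet, then the whole family has a common point; this is exactly why the hypotheses ``all of'' and ``any three of'' are equivalent here. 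Then, fixing $\mu\in\bigcap_{A\in\cF}S_C(A)$ and given $\nu\in W_C(\cF)$ with $\nu\in W_C(A)$, $A\in\cF$, the fact that $\mu$ is a star-center of $W_C(A)$ gives $[\mu,\nu]\subseteq W_C(A)\subseteq W_C(\cF)$; and $\mu\in S_C(A)\subseteq W_C(A)\subseteq W_C(\cF)$, so $W_C(\cF)$ is star-shaped with center $\mu$.

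Parts (c) and (d) I would deduce from (b). If $\tr C=0$, then $(\tr C)(\tr A)/n=0$ for every $A\in\cF$, so by \Cref{P2.1}(d) the point $0$ is a common star-center of all the sets $W_C(A)$, i.e.\ $0\in\bigcap_{A\in\cF}S_C(A)$, and (b) applies. If every $A\in\cF$ has the same trace, then $(\tr C)(\tr A)/n$ is one and the same constant for all $A\in\cF$, and again \Cref{P2.1}(d) places this constant in $\bigcap_{A\in\cF}S_C(A)$, so (b) applies with $\mu$ equal to that constant. (Alternatively, both (c) and (d) can be argued directly from \Cref{P2.1}(d) without Helly, since the common star-center is produced explicitly and one need only take the union of the segments.)

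Part (a) is the one genuinely using convexity of $\cF$ together with the affine-scaling identity \Cref{P2.1}(b). Given $\nu\in W_C(\cF)$, say $\nu\in W_C(A)$ with $A\in\cF$, I would set $A_t:=tA+(1-t)\mu I$, which lies in $\cF$ for all $t\in[0,1]$ by convexity, and compute, using \Cref{P2.1}(b),
$$W_C(A_t)=tW_C(A)+(1-t)\mu\tr C\ \ni\ t\nu+(1-t)\mu\tr C.$$
As $t$ runs over $[0,1]$, the point $t\nu+(1-t)\mu\tr C$ traces out the entire segment from $\mu\tr C$ (at $t=0$) to $\nu$ (at $t=1$), and each such point lies in $W_C(A_t)\subseteq W_C(\cF)$; since also $\mu\tr C=W_C(\mu I)\subseteq W_C(\cF)$, this shows $[\mu\tr C,\nu]\subseteq W_C(\cF)$ for every $\nu\in W_C(\cF)$, which is the assertion. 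The only real subtlety I anticipate is the Helly reduction in part (b) --- precisely, verifying that each $S_C(A)$ is compact and convex so that the planar Helly theorem (in its form valid for arbitrary, possibly infinite, families of compact convex sets) legitimately applies; everything else is routine given \Cref{P2.1}(b),(c),(d).
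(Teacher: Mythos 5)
Your argument matches the paper's own proof in all essentials: (a) by moving along $tA+(1-t)\mu I$ inside $\cF$ and invoking \Cref{P2.1}(b) (the paper phrases this as $\conv\{\mu\tr C, W_C(B)\}\subseteq W_C(\conv\{\mu I,B\})$), (b) directly from the definition of a star center together with Helly's theorem (your check that each $S_C(A)$ is nonempty, compact and convex is a worthwhile detail the paper leaves implicit), and (c),(d) by feeding the common center $(\tr C)(\tr A)/n$ from \Cref{P2.1}(d) into (b). The only caveat is that in (d) your argument---exactly like the paper's---produces the star center $\nu\,\tr C/n$ rather than the stated $\nu\,\tr C$, which appears to be a typo in the proposition (take $C=E_{11}$ and $\cF=\{\diag(2,2,-1)\}$ to see that $\nu\,\tr C$ need not even lie in $W_C(\cF)$).
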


\begin{proof} (a) Suppose $\cF$ contains a scalar matrix $\mu I$ and $B \in \cF$.
Then 
$$\conv\{\mu\tr C, W_C(B)\}\subseteq W_C(\conv\{\mu I, B\})\subseteq W_C(\cF).  $$
The result follows.

(b) Suppose $\mu \in \cap \{S_C(A): A \in \cF\}$.
Then for any $\nu \in W_C(\cF)$,
there is $B \in \cF$ such that $\nu \in W_C(B)$. As $\mu\in S_C(B)$, the
line segment joining $\mu$ and $\nu$ will lie in $W_C(B) \subseteq W_C(\cF)$.
Thus, $W_C(\cF)$ is star-shaped with $\mu$ as a star center.

If $S_C(A_0)\cap S_C(A_1) \cap S_C(A_2)\ne \emptyset$ for any
$A_0, A_1, A_2 \in \cF$, then $\cap \{S_C(A): A \in \cF\} \ne \emptyset$
by Helly's Theorem. So, the result follows from the preceding paragraph.

(c) Note that $\frac{1}{n}(\tr C)(\tr A)\in S_C(A)$ for any $C,A\in M_n$, see \cite{CT} or \Cref{P2.1} (d). If $\tr C=0$, then $0\in \bigcap\{ S_C(A):A\in\cF\}$ is a star-center of $W_C(\cF)$ by (b).

(d) The assumption implies that $\nu\tr C$ is the common star-center of 
$W_C(A)$ for all $A \in \cF$. Thus, the result follows from (b).
\end{proof}

\begin{lemma}\label{commoncenter2}
Let $C,A,B\in M_n$ and 
$\cF=\conv\{A,B\}$. If $\mu\in\ S_C(A)\cap S_C(B)$, 
then $W_C(\cF)$ is star-shaped with star-center $\mu$.
\end{lemma}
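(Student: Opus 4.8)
The plan is to reduce the assertion to a single "radial segment" and then to confront the joint-realization difficulty. Fix $\nu \in W_C(\cF)$ and $s\in[0,1]$; we must show $s\nu+(1-s)\mu\in W_C(\cF)$. By definition there are $t_0\in[0,1]$ and a unitary $U$ with $\nu=\tr\bigl(C\,U^*(t_0A+(1-t_0)B)U\bigr)$. Since $W_C(U^*AU)=W_C(A)$ and $W_C(U^*BU)=W_C(B)$ by \Cref{P2.1}(a), the point $\mu$ is still a common star-center of $W_C(U^*AU)$ and $W_C(U^*BU)$; and since $W_C(\cF)=W_C\bigl(\conv\{U^*AU,\,U^*BU\}\bigr)$ by \Cref{T3.1}(a), we may replace $A,B$ by $U^*AU,\,U^*BU$ and thus assume $U=I$. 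Writing $a:=\tr(CA)\in W_C(A)$ and $b:=\tr(CB)\in W_C(B)$, we then have $\nu=t_0a+(1-t_0)b$ and
$$ s\nu+(1-s)\mu \;=\; t_0\bigl(sa+(1-s)\mu\bigr)\;+\;(1-t_0)\bigl(sb+(1-s)\mu\bigr). $$
Because $\mu\in S_C(A)$ and $a\in W_C(A)$, the point $a(s):=sa+(1-s)\mu$ lies in $W_C(A)$; likewise $b(s):=sb+(1-s)\mu$ lies in $W_C(B)$.

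Thus it remains to show $t_0\,a(s)+(1-t_0)\,b(s)\in W_C(\cF)$. This is immediate once $a(s)$ and $b(s)$ are realized by a \emph{common} unitary $V$, for then $t_0a(s)+(1-t_0)b(s)=\tr\bigl(C\,V^*(t_0A+(1-t_0)B)V\bigr)\in W_C(t_0A+(1-t_0)B)\subseteq W_C(\cF)$. Equivalently, the plan is to show that $\bigl(a(s),b(s)\bigr)$ — the point dividing, in ratio $(1-s):s$, the segment from $(a,b)$ to $(\mu,\mu)$ in $\IC^2$ — lies in the joint $C$-numerical range $W_C(A,B)=\{(\tr(CV^*AV),\tr(CV^*BV)):V\in\cU_n\}$. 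To produce such a $V$, I would use the fact (built into the usual proof that a star-center is a star-center) that the radial segment $[\mu,a]\subseteq W_C(A)$ lifts to a continuous path $V(\cdot):[0,1]\to\cU_n$ with $V(1)=I$ and $\tr(CV(s)^*AV(s))=a(s)$; along it the companion curve $s\mapsto\tr(CV(s)^*BV(s))$ is a continuous curve in $W_C(B)$ starting at $b$, which one must steer to $b(s)$, using also the analogous path supplied by $\mu\in S_C(B)$ and the freedom to vary the weight $t_0$ with $s$.

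The main obstacle is exactly this matching step: a unitary that moves the $A$-value of $V\mapsto\tr(CV^*{\cdot}V)$ along $[a,\mu]$ will in general drag the $B$-value off the segment $[b,\mu]$, so one cannot simply reuse the realizers for $W_C(A)$. I expect the resolution to proceed by a connectedness / intermediate-value argument as $s$ runs from $1$ down to $0$: at $s=1$ the point $s\nu+(1-s)\mu=\nu$ is realized with weight $t_0$ and $V=I$, at $s=0$ it equals $\mu$ and is realized both in $W_C(A)$ and in $W_C(B)$ since $\mu\in S_C(A)\cap S_C(B)$, and one tracks a realizing pair $(t(s),V(s))$ between these extremes — the two-matrix structure $\cF=\conv\{A,B\}$ guaranteeing that the target always lies in $\bigcup_t W_C(tA+(1-t)B)$. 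It is in establishing existence (and enough continuity) of this realizing pair — i.e., in reconciling the $A$- and $B$-realizations — that the hypothesis $\mu\in S_C(A)\cap S_C(B)$ and the \Cref{P2.1}(d)-type star-shapedness enter essentially.
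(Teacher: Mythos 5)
There is a genuine gap: the entire content of the lemma is the ``matching step'' that you explicitly leave open. Your reduction (conjugating so the realizing unitary is $I$, writing $\nu=t_0a+(1-t_0)b$ and $s\nu+(1-s)\mu=t_0a(s)+(1-t_0)b(s)$ with $a(s)\in W_C(A)$, $b(s)\in W_C(B)$) is correct but cheap; knowing $a(s)$ and $b(s)$ lie in the two ranges \emph{separately} gives nothing, since $W_C(\cF)$ only contains values $\tr\bigl(CV^*(tA+(1-t)B)V\bigr)$ built from a \emph{common} unitary $V$. At precisely this point you write that you ``expect'' a connectedness/intermediate-value argument producing a continuously varying realizing pair $(t(s),V(s))$, but no such argument is given, and the sketch does not obviously work: realizing unitaries of a point of a $C$-numerical range are neither unique nor continuously selectable, and the companion value $\tr(CV(s)^*BV(s))$ dragged along your $A$-path is uncontrolled, so there is no single real-valued function of one parameter to which an intermediate value theorem applies. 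What you have is an accurate restatement of the difficulty, not a resolution of it.

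The paper resolves the obstacle by never trying to jointly realize a prescribed point. With $V$ realizing $\zeta$, it first notes that the \emph{boundary} of the triangle with vertices $\tr(CV^*AV)$, $\tr(CV^*BV)$, $\mu$ lies in $W_C(\cF)$: the two sides ending at $\mu$ lie in $W_C(A)$ and $W_C(B)$ by the star-center hypothesis, while the third side is $\{\tr(CV^*(tA+(1-t)B)V):t\in[0,1]\}$, where a common unitary is automatic because both endpoints come from the same $V$. Then, taking $U_A$ with $\tr(CU_A^*AU_A)=\mu$ and a continuous path $f:[0,1]\to\cU_n$ from $V$ to $U_A$, one gets a continuous family of such triangle boundaries $\triangle(t)\subseteq W_C(\cF)$ which degenerates to a segment at $t=1$; a continuity (sweeping) argument — for $\zeta$ in the solid initial triangle, look at the last time $t_0$ it is still enclosed — shows $\zeta\in\triangle(t_0)\subseteq W_C(\cF)$, giving $\conv\{\tr(CV^*AV),\tr(CV^*BV),\mu\}\subseteq W_C(\cF)$ and hence star-shapedness. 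Some argument of this kind is what your ``matching step'' needs; as written, the step where the hypothesis $\mu\in S_C(A)\cap S_C(B)$ is supposed to ``enter essentially'' is exactly the step you have not carried out.
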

\begin{proof}
Let $\zeta\in W_C(\cF)$. There are $V\in\cU_n$ and $0\le t\le 1$ such that $\zeta=\tr(CV^*(tA+(1-t)B)V)$. It suffices to show $\co\{\mu, \tr(CV^*AV),\tr(CV^*BV)\}\subseteq  W_C(\cF)$.  For any $a,b,c\in\C$, we let $\triangle(a,b,c)=\conv\{a,b\}\cup \conv\{b,c\}\cup \conv\{a,c\}$, i.e., the triangle 
(without the interior) with the vertices $a,b,c$. Let $U_A\in\cU_n$ such that 
$ \tr(CU_A^*AU_A)=\mu.$ 
As $\mu\in\ S_C(A)\cap S_C(B)$ we have
$$\conv\{\tr(CV^*AV),\mu \}\cup\conv\{\tr(CV^*BV),\mu \}
\subseteq W_C(\cF).$$
Moreover we have
$$\conv\{\tr(CV^*AV),\tr(CV^*BV)\}=\{\tr(CV^*(tA+(1-t)B)V):0\leq t\leq 1\}\subseteq W_C(\cF).$$
Hence $\triangle(\tr(CV^*AV),\tr(CV^*BV),\mu)\subseteq W_C(\cF)$. We shall show that 
\begin{equation}\label{inclusion1}\conv\{\tr(CV^*AV),\tr(CV^*BV),\mu\}\subseteq W_C(\cF).\end{equation}
If $\triangle(\tr(CV^*AV),\tr(CV^*BV),\mu)$ is a line segment or a point, then \cref{inclusion1} holds clearly. Now assume that $\triangle(\tr(CV^*AV),\tr(CV^*BV),\mu)$ is non-degenerate. As $\cU_n$ is path-connected, we define a continuous function $f:[0,1]\to\cU_n$ with $f(0)=V$ and $f(1)=U_A$. For $0\le t\le 1$, let $V_A(t)=\tr(Cf(t)^*Af(t))$ and $V_B(t)=\tr(Cf(t)^*Bf(t))$.
Note that for any $t\in[0,1]$, we have
$$\triangle(t)=\triangle(V_A(t),V_B(t),\mu)\subseteq W_C(\cF).$$ 

\begin{figure}[H]
\centering
\includegraphics[height=5.5cm]{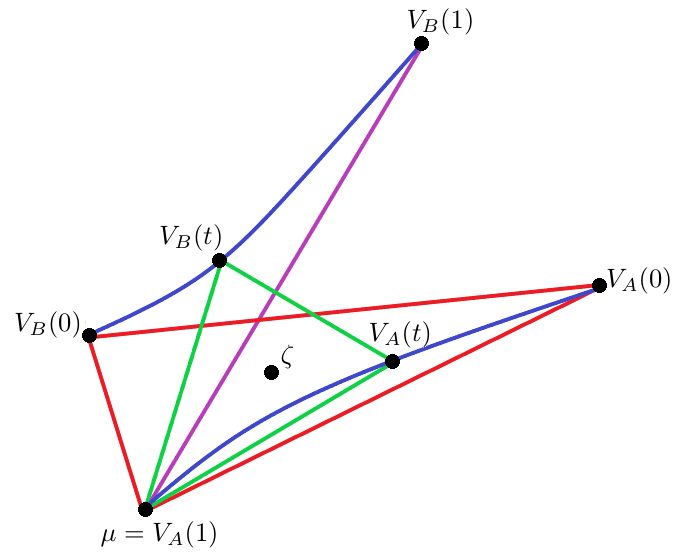}
\caption{$\triangle(0)$ and $\triangle(t)$ are triangles in the red and green, respectively. In particular, $\triangle(1)$ dengenerates to a line segment in purple.}
\label{fig3}
\end{figure}

For any $\zeta\in \conv \triangle(0)$, see \Cref{fig3}, let $$t_0=\max\{t:\zeta\in\conv\triangle(s) \mbox{ for all }0\le s\le t\}\,.$$
Since $\triangle(1)$ degenerates, by continuity of $f$, we have  $\zeta\in\triangle (t_0) \subseteq W_C(\cF)$. Hence the result follows. 
\end{proof}

\medskip
One can extend \Cref{commoncenter2} to a more general situation.
\begin{theorem}\label{intersect_m}
Let $C\in M_n$ and $\cG$ be a (finite or infinite) family of matrices in $M_n$. If $\mu\in\bigcap_{A\in\cG} S_C(A)$, then $W_C(\cF)$ is star-shaped with star-center $\mu$ for $\cF=\co\cG$.
\end{theorem}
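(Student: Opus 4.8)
The plan is to reduce \Cref{intersect_m} to the two-matrix case handled in \Cref{commoncenter2}. The key observation is that any point $\zeta\in W_C(\cF)$ with $\cF=\co\cG$ arises from a finite convex combination: there exist $A_1,\dots,A_k\in\cG$, weights $s_1,\dots,s_k\ge 0$ summing to $1$, and a unitary $V$ with $\zeta=\tr(CV^*(\sum s_j A_j)V)$. So it suffices to show that for every \emph{finite} subfamily $\{A_1,\dots,A_k\}\subseteq\cG$, the set $W_C(\co\{A_1,\dots,A_k\})$ is star-shaped with star-center $\mu$, since $\mu\in S_C(A_j)$ for each $j$ by hypothesis. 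Then the line segment from $\mu$ to any such $\zeta$ lies in $W_C(\co\{A_1,\dots,A_k\})\subseteq W_C(\cF)$, giving the result for the whole family.

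Thus the real work is the finite case, which I would prove by induction on $k$. The base case $k=2$ is exactly \Cref{commoncenter2}. For the inductive step, write $\co\{A_1,\dots,A_{k+1}\}$ and take $\zeta=\tr(CV^*BV)$ with $B=\sum_{j=1}^{k+1}s_jA_j$. If $s_{k+1}=1$ we are done since $\mu\in S_C(A_{k+1})$; otherwise set $B'=\frac{1}{1-s_{k+1}}\sum_{j=1}^{k}s_jA_j\in\co\{A_1,\dots,A_k\}$, so that $B=(1-s_{k+1})B'+s_{k+1}A_{k+1}$. The main geometric tool, as in \Cref{commoncenter2}, is a ``moving triangle'' argument: I would choose a path $f:[0,1]\to\cU_n$ with $f(0)=V$ and $f(1)=U$ where $U$ realizes $\tr(CU^*A_{k+1}U)=\mu$ (possible since $\mu\in S_C(A_{k+1})$ and $\cU_n$ is path-connected), and track the triangle $\triangle(t)=\triangle(\tr(Cf(t)^*B'f(t)),\ \mu,\ \tr(Cf(t)^*A_{k+1}f(t)))$. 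By the inductive hypothesis the segment from $\mu$ to $\tr(Cf(t)^*B'f(t))$ lies in $W_C(\co\{A_1,\dots,A_k\})\subseteq W_C(\cF)$; the segment from $\mu$ to $\tr(Cf(t)^*A_{k+1}f(t))$ lies in $W_C(A_{k+1})$; and the segment between the two $B'$-and-$A_{k+1}$ vertices is the image of $[B',A_{k+1}]$ under $\tr(Cf(t)^*(\cdot)f(t))$, hence in $W_C(\cF)$. So $\triangle(t)\subseteq W_C(\cF)$ for all $t$, and since $\triangle(1)$ degenerates to the segment $[\mu,\mu]=\{\mu\}$ (as $f(1)=U$ forces the $A_{k+1}$-vertex to be $\mu$), the same sweeping/continuity argument as in \Cref{commoncenter2} shows that any $\eta$ in the filled triangle $\conv\triangle(0)$ — in particular every point on the segment $[\mu,\zeta]$ — lies in some $\conv\triangle(t_0)=\triangle(t_0)\subseteq W_C(\cF)$.

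Actually, one can streamline this considerably: rather than redoing the sweep, observe that $\co\{A_1,\dots,A_{k+1}\}=\co\bigl(\co\{A_1,\dots,A_k\}\cup\{A_{k+1}\}\bigr)$, and more generally $\co\cG=\co(\cG_1\cup\cG_2)$ when $\cG=\cG_1\cup\cG_2$. So I would prove the clean intermediate statement: \emph{if $\cF_1,\cF_2$ are convex matrix sets and $\mu\in S_C(A)$ for all $A\in\cF_1\cup\cF_2$ with $W_C(\cF_1),W_C(\cF_2)$ star-shaped at $\mu$, then $W_C(\co(\cF_1\cup\cF_2))$ is star-shaped at $\mu$}. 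Its proof is the moving-triangle argument with the two non-$\mu$ vertices being $\tr(Cf(t)^*B_1f(t))$ and $\tr(Cf(t)^*B_2f(t))$ for fixed $B_i\in\cF_i$; one uses that the segment from $\mu$ to each $\tr(Cf(t)^*B_if(t))$ lies in $W_C(\cF_i)\subseteq W_C(\cF)$ by star-shapedness (this is where we need $W_C(\cF_i)$ star-shaped, not just the individual $W_C(A)$), and that the segment between them lies in $W_C(\co\{B_1,B_2\})\subseteq W_C(\cF)$. Taking $\cF_1=\co\{A_1,\dots,A_k\}$ and $\cF_2=\{A_{k+1}\}$ (noting a singleton's $W_C$ is trivially star-shaped at $\mu$ by \Cref{P2.1}(d) once we check $\mu=\tfrac1n(\tr C)(\tr A_{k+1})$... but in general $\mu$ need only be \emph{some} star-center, so $\{A_{k+1}\}$ with $W_C(A_{k+1})$ star-shaped at $\mu$ is the right hypothesis) closes the induction.

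The main obstacle I anticipate is purely bookkeeping in the continuity/sweeping argument — making precise that $t_0=\max\{t:\eta\in\conv\triangle(s)\text{ for all }s\le t\}$ is well-defined, that $\eta\in\conv\triangle(t_0)$, and that $\eta\notin\mathrm{int}\,\triangle(t_0)$ forces $\eta\in\triangle(t_0)$ (the boundary), which is where degeneracy of $\triangle(1)$ is used. Since this is verbatim the argument already given for \Cref{commoncenter2}, I would simply invoke that lemma's reasoning rather than reproduce it, keeping the new content to the reduction to finitely many matrices and the two-set merging step.
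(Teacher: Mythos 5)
Your argument is correct, but it takes a genuinely different route from the paper. You reduce to finite subfamilies and then induct on the number of matrices, re-running the moving-triangle homotopy of \Cref{commoncenter2} at each step with vertices $\mu$, $\tr(Cf(t)^*B'f(t))$ ($B'\in\co\{A_1,\dots,A_k\}$, handled by the inductive hypothesis) and $\tr(Cf(t)^*A_{k+1}f(t))$ (handled by $\mu\in S_C(A_{k+1})$). The paper instead disposes of the general case in one stroke, using \Cref{commoncenter2} only as a black box: writing $\zeta=\tr(CU^*(t_1A_1+\cdots+t_mA_m)U)$ and $\zeta_i=\tr(CU^*A_iU)$, one has $\zeta\in\co\{\zeta_1,\dots,\zeta_m\}\subseteq\IC\cong\IR^2$, and the half-line from $\mu$ through $\zeta$ exits this planar polygon through a segment joining some $\zeta_i,\zeta_j$; hence $\zeta\in\co\{\mu,\zeta_i,\zeta_j\}$, and this triangle lies in $W_C(\co\{A_i,A_j\})\subseteq W_C(\cF)$ because $\mu$ is a star-center of $W_C(\co\{A_i,A_j\})$ and the edge $[\zeta_i,\zeta_j]$ is the image of $tA_i+(1-t)A_j$ under the same unitary $U$. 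So the paper's ray-shooting reduction to pairs avoids both the induction and any repetition of the sweep argument; your version buys nothing extra here (both arguments are intrinsically planar), but it is a correct and natural generalization of the lemma's proof. Two small points to fix if you write it up: $\triangle(1)$ does not degenerate to $\{\mu\}$ but to the segment joining $\mu$ and $\tr(CU^*B'U)$ (which is still fine, since degeneracy plus the inductive hypothesis is all the sweep needs); and your ``streamlined'' merging statement is misstated — the hypothesis $\mu\in S_C(A)$ for \emph{all} $A\in\cF_1$ cannot be verified inductively for $\cF_1=\co\{A_1,\dots,A_k\}$, while if you weaken it to star-shapedness of $W_C(\cF_1)$ at $\mu$ you must still assume $\mu\in W_C(B)$ for each individual $B\in\cF_2$ to obtain the terminal unitary; this is harmless in your application only because $\cF_2$ is the singleton $\{A_{k+1}\}$ with $\mu\in S_C(A_{k+1})$.
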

\begin{proof}
By \Cref{commoncenter2}, the result holds if $\cG$ has 2 elements. Suppose $|\cG|\geq 3$ and $\mu\in\bigcap_{A\in\cG} S_c(A)$. Let $\zeta\in W_C(\cF)$. Then there exist $A_1,...,A_m\in\cG$, $t_1,...,t_m> 0$ with $t_1+\cdots+t_m=1$ and $U\in\cU_n$ such that $\zeta=\tr(CU^*(t_1A_1+\cdots +t_mA_m)U)$. Let $\zeta_i=\tr(CU^*A_iU),$ $i=1,...,m$. Then $\zeta\in\co\{\zeta_1,...,\zeta_m\}.$ The half line from $\mu$ through $\zeta$ intersects a line segment joining some $\zeta_i,\zeta_j$ with $1\leq i\leq j\leq m$ such that $\zeta\in\co\{\mu,\zeta_i,\zeta_j\}$. By \Cref{commoncenter2}, we have $\co\{\mu,\zeta_i,\zeta_j\}\subseteq W_C(\co\{A_i,A_j\})\subseteq W_C(\cF).$
\end{proof}

Note that if $W_C(A)$ is convex, then $W_C(A) = S_C(A)$. 
So, if $W_C(A)$ is convex for every
$A \in \cG$, and if $\mu \in \cap_{A \in \cG}W_C(A)$, then $\mu$ is a star-center 
of $W_C(\co(\cG))$. Checking the condition that $W_C(A)$ is convex for every $A \in \cG$
may not be easy. On the other hand, if $C \in M_n$ is such that
$W_C(A)$ is always convex, then one can skip the checking process,
and we have the following.

\begin{corollary}
Suppose $C\in M_n$ satisfies any one of the conditions {\rm (e.1)---(e.3)} in 
\Cref{P2.1} {\rm (e)}. If $\cG \subseteq M_n$ and 
$\mu \in \cap_{A \in \cG}W_C(A)$, then $\mu$ is a star-center of $W_C(\conv\cG)$.
\end{corollary}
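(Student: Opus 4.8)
The plan is to deduce the corollary directly from \Cref{intersect_m}, using \Cref{P2.1}~(e) only to guarantee convexity of each $W_C(A)$. First I would invoke the hypothesis: $C$ satisfies one of (e.1)--(e.3), so by \Cref{P2.1}~(e), $W_C(A)$ is convex for \emph{every} $A \in M_n$; in particular $W_C(A)$ is convex for every $A \in \cG$. The key elementary observation to record is that if a compact set $W_C(A)$ (compactness by \Cref{P2.1}~(c)) is convex, then every point of $W_C(A)$ is a star-center of $W_C(A)$, i.e. $S_C(A) = W_C(A)$; this is immediate since for a convex set the segment from any point to any other point stays inside.

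Next I would combine these two facts. Since $S_C(A) = W_C(A)$ for each $A \in \cG$, the hypothesis $\mu \in \bigcap_{A \in \cG} W_C(A)$ becomes $\mu \in \bigcap_{A \in \cG} S_C(A)$. Now \Cref{intersect_m} applies verbatim with this family $\cG$: it asserts that $W_C(\cF)$ is star-shaped with star-center $\mu$ for $\cF = \conv \cG$. That is exactly the conclusion of the corollary, so the proof is complete in essentially two lines once the bridging remark $S_C(A) = W_C(A)$ is in place.

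There is no real obstacle here: the work has all been done in \Cref{intersect_m}, and the only content of the corollary is the passage from ``$W_C(A)$ convex'' (a checkable algebraic condition on $C$) to ``$\mu$ is a star-center of $W_C(A)$'' (the hypothesis \Cref{intersect_m} actually needs). The mild point worth a sentence is why (e.1)--(e.3) give convexity of $W_C(A)$ for all $A$ simultaneously rather than for one fixed $A$ — but this is precisely the statement of \Cref{P2.1}~(e), which quantifies over $A$ implicitly through the structural condition on $C$ alone. So the proof I would write is: by \Cref{P2.1}~(e), $W_C(A)$ is convex, hence equals $S_C(A)$, for every $A \in \cG$; therefore $\mu \in \bigcap_{A \in \cG} S_C(A)$, and \Cref{intersect_m} gives that $\mu$ is a star-center of $W_C(\conv \cG)$.

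\begin{proof}
By hypothesis $C$ satisfies one of the conditions {\rm (e.1)--(e.3)}, so by \Cref{P2.1}~{\rm (e)} the set $W_C(A)$ is convex for every $A \in M_n$; in particular $W_C(A)$ is convex for every $A \in \cG$. A convex set $W_C(A)$ satisfies $S_C(A) = W_C(A)$, since the segment joining any point of $W_C(A)$ to any other point of $W_C(A)$ lies in $W_C(A)$. Hence the assumption $\mu \in \bigcap_{A \in \cG} W_C(A)$ is the same as $\mu \in \bigcap_{A \in \cG} S_C(A)$. Applying \Cref{intersect_m} to the family $\cG$, we conclude that $W_C(\cF)$ is star-shaped with star-center $\mu$ for $\cF = \conv \cG$.
\end{proof}
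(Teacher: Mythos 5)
Your proof is correct and matches the paper's own reasoning: the paper derives this corollary from the remark that convexity of $W_C(A)$ (guaranteed for all $A$ by \Cref{P2.1}~(e) since the condition is on $C$ alone) gives $S_C(A) = W_C(A)$, so the hypothesis places $\mu$ in $\bigcap_{A\in\cG} S_C(A)$ and \Cref{intersect_m} applies. Nothing is missing.
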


Next, we show that if $W_C(A)$ and $W_C(B)$ are convex, then 
$W_C(\cF)$  is star-shaped for
$\cF=\conv\{A,B\}$ even when $W_C(A)\cap W_C(B) = \emptyset$.

\begin{theorem}\label{nonintersect}
Let $C\in M_n$. Suppose $A,B\in M_n$ such that $W_C(A)$ and $W_C(B)$ 
are convex sets with empty intersection.  Let $\cF=\co\{A,B\}$.

If $W_C(A)\cup W_C(B)$ lies on a line, then $W_C(\cF)=\co\{W_C(A),W_C(B)\}$ is convex.
 
Otherwise, there are two non-parallel lines $L_1$ and $L_2$ intersecting at $\mu$
such that for each $j = 1,2$, $L_j$ is a common supporting line of $W_C(A)$ and 
$W_C(B)$ separating the two convex sets (i.e., 
$W_C(A)$ and $W_C(B)$ lying on opposite closed half spaces 
determined by $L_j$); the set
$W_C(\cF)$ is star-shaped with star-center $\mu$, see \Cref{fig4}.

\begin{figure}[H]
\centering
{\includegraphics[height=3.5cm]{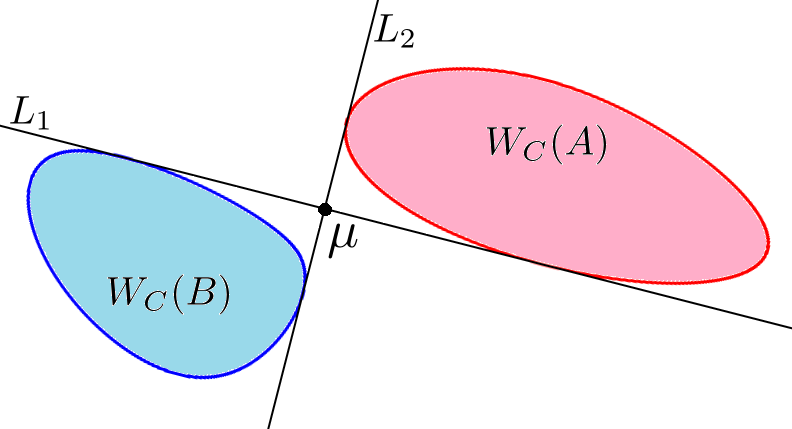} }
\caption{}
\label{fig4}
\end{figure}
\end{theorem}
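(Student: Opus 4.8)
The two cases require different treatment. Suppose first that $W_C(A)\cup W_C(B)$ lies on a line $L$. Being compact and convex, $W_C(A)$ and $W_C(B)$ are then disjoint closed subintervals of $L$. For any unitary $V$, put $P_V=\tr(CV^*AV)\in W_C(A)$ and $Q_V=\tr(CV^*BV)\in W_C(B)$; then $\tr\!\big(CV^*(tA+(1-t)B)V\big)=tP_V+(1-t)Q_V$, so every element of $W_C(\cF)$ is a convex combination of a point of $W_C(A)$ and a point of $W_C(B)$, whence $W_C(\cF)\subseteq\co\{W_C(A),W_C(B)\}$. On the other hand $\cF=\co\{A,B\}$ is compact and connected, so by \Cref{T3.1} the set $W_C(\cF)$ is a compact connected subset of $L$, i.e.\ a closed interval; since it contains $W_C(A)$ and $W_C(B)$ it must equal $\co\{W_C(A),W_C(B)\}$, which is convex.

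Now assume $W_C(A)\cup W_C(B)$ is not collinear. I would begin from the elementary planar fact that two disjoint compact convex sets with non-collinear union have two common supporting lines that separate them (their internal common tangents); these two lines are not parallel — two parallel separating supporting lines $\{\ell=0\}$, $\{\ell=c\}$ with $c\neq 0$ are impossible, since the one farther from a given set cannot touch it — so they meet at a point $\mu$, and $W_C(A)$, $W_C(B)$ lie in the two opposite closed sectors at $\mu$ determined by $L_1,L_2$. By the computation above, $[P_V,Q_V]=\{\tr(CV^*((1-\tau)A+\tau B)V):\tau\in[0,1]\}\subseteq W_C(\cF)$ for every unitary $V$, and every point of $W_C(\cF)$ lies on one of these chords. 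Hence it suffices to prove that $\co\{\mu,P_V,Q_V\}\subseteq W_C(\cF)$ for every unitary $V$: given $\zeta\in W_C(\cF)$, choosing $V$ with $\zeta\in[P_V,Q_V]$ yields $[\mu,\zeta]\subseteq\co\{\mu,P_V,Q_V\}\subseteq W_C(\cF)$, so $W_C(\cF)$ is star-shaped with star-center $\mu$.

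To prepare the degeneration argument I would first show that some chord passes through $\mu$. Each chord $[P_V,Q_V]$ meets $L_1$ because $P_V$ and $Q_V$ lie in opposite closed half-planes of $L_1$; the intersection point $r(V)\in L_1$ depends continuously on $V$, so $r(\cU_n)$ is a subinterval of $L_1$ containing a point of $W_C(A)\cap L_1$ (take $V$ with $P_V\in W_C(A)\cap L_1$) and a point of $W_C(B)\cap L_1$. Since $\mu$ separates these two disjoint sets on $L_1$, we get $\mu\in r(\cU_n)$, i.e.\ there is $V_0$ with $\mu\in[P_{V_0},Q_{V_0}]$. A refinement of this argument, using also the convexity of $W_C(A)$ and $W_C(B)$, should give the ``cone'' inclusions $\co(\{\mu\}\cup W_C(A))\subseteq W_C(\cF)$ and $\co(\{\mu\}\cup W_C(B))\subseteq W_C(\cF)$; for instance, for $p\in W_C(A)$ one wants a chord through $\mu$ whose $A$-endpoint on the ray from $\mu$ through $p$ is at least as far from $\mu$ as $p$, and then $[\mu,p]$ is a subsegment of that chord.

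Granting these cone inclusions, I would finish by the homotopy/degeneration scheme of \Cref{commoncenter2}. Fix a unitary $V$, choose a path $f:[0,1]\to\cU_n$ with $f(0)=V$ and $f(1)=V_0$, and set $\Delta(s)=\co\{\mu,P_{f(s)},Q_{f(s)}\}$. For every $s$ the boundary of $\Delta(s)$ lies in $W_C(\cF)$: the edge $[P_{f(s)},Q_{f(s)}]$ is a chord, and $[\mu,P_{f(s)}]\subseteq\co(\{\mu\}\cup W_C(A))$, $[\mu,Q_{f(s)}]\subseteq\co(\{\mu\}\cup W_C(B))$. At $s=1$ the triangle degenerates to the segment $[P_{V_0},Q_{V_0}]\ni\mu$. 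Thus, as in \Cref{commoncenter2}, for $\zeta\in\co\{\mu,P_V,Q_V\}=\co\Delta(0)$ we set $s_0=\max\{s:\zeta\in\co\Delta(s')\text{ for all }s'\le s\}$; since $\co\Delta(1)$ is one-dimensional, $\zeta$ cannot stay interior up to $s=1$, so $\zeta\in\Delta(s_0)\subseteq W_C(\cF)$. The step I expect to be the main obstacle is precisely the two cone inclusions: unlike the edges lying in the convex sets $W_C(A)$, $W_C(B)$ or on the chords, the segments $[\mu,p]$ with $p$ a chord-endpoint are not handed to us, and showing they lie in $W_C(\cF)$ seems to require a genuinely two-dimensional analysis of the family of chords, balancing the separation of $W_C(A)$ and $W_C(B)$ by $L_1,L_2$ against their convexity (cf.\ \Cref{fig4}).
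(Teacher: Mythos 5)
Your collinear case and your construction of the two non-parallel common supporting lines are essentially the paper's (the paper gets $L_1,L_2$ by a continuity argument on $q(t)=\min\{n(t)\cdot(w-p):w\in W_C(B),\,p\in P(t)\}$, but your sketch is sound), and your reduction of the star-shapedness to the claim $\co\{\mu,\zeta_A,\zeta_B\}\subseteq W_C(\cF)$ for every unitary $V$, where $\zeta_A=\tr(CV^*AV)$ and $\zeta_B=\tr(CV^*BV)$, is exactly the paper's reduction. The genuine gap is the step you yourself flag: the cone inclusions $\co(\{\mu\}\cup W_C(A))\subseteq W_C(\cF)$ and $\co(\{\mu\}\cup W_C(B))\subseteq W_C(\cF)$ are never proved, and they are not a routine refinement of your intermediate-value observation that some chord passes through $\mu$ --- since $W_C(A)$ is convex, $\co(\{\mu\}\cup W_C(A))$ is precisely the union of the segments $[\mu,\nu]$ with $\nu\in W_C(A)$, so these inclusions are the star-shapedness conclusion itself restricted to $W_C(A)\cup W_C(B)$. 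Consequently your homotopy frame $\Delta(s)=\co\{\mu,P_{f(s)},Q_{f(s)}\}$ has two edges, $[\mu,P_{f(s)}]$ and $[\mu,Q_{f(s)}]$, that you cannot place inside $W_C(\cF)$, and the degeneration scheme of \Cref{commoncenter2} never gets started; in \Cref{commoncenter2} those edges were available only because $\mu$ there was a common star-center of $W_C(A)$ and $W_C(B)$, whereas here $\mu$ typically lies in neither set.

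The paper's way around this is to change the frame so that every edge lies in $W_C(\cF)$ for free: after an affine normalization with $\mu=0$, $L_1,L_2$ the coordinate axes, $W_C(A)$ in the first quadrant and $W_C(B)$ in the third, one picks $y_B\in W_C(B)\cap L_2$, a unitary $U_B$ with $y_B=\tr(CU_B^*BU_B)$, and sets $y_A=\tr(CU_B^*AU_B)$. The closed curve $Q=\overline{\zeta_Ay_A}\cup\overline{\zeta_By_B}\cup\overline{\zeta_A\zeta_B}\cup\overline{y_Ay_B}$ is contained in $W_C(\cF)$ because two edges are chords (values of $\co\{A,B\}$ at a single unitary) and the other two lie in the convex sets $W_C(A)$ and $W_C(B)$; moreover the triangle $\triangle(\zeta_B,bi,0)$, where $bi$ is the point where $\overline{\zeta_A\zeta_B}$ meets the $y$-axis, is enclosed by $Q$. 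Sweeping along a unitary path from $V$ to $U_B$ and using that the curve degenerates at the endpoint, every enclosed point is hit, giving $\co\{\zeta_B,bi,0\}\subseteq W_C(\cF)$; the symmetric argument on the $x$-axis gives $\co\{\zeta_A,a,0\}\subseteq W_C(\cF)$, and the union of these two triangles is $\co\{\zeta_A,\zeta_B,0\}$. Some such device (a frame whose edges are chords or segments inside the two convex sets, anchored at a point of $W_C(B)\cap L_2$, resp.\ $W_C(A)\cap L_1$) is needed to replace your unproved cone inclusions; as written, your argument for the non-collinear case is incomplete at its central step.
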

\begin{proof}
Assume $W_C(A)\cup W_C(B)$ lies on a line. As $\cF$ is connected, by Theorem~\ref{T3.1}, $W_C(\cF)$ is connected. We have
\begin{eqnarray*} W_C(A)\cup W_C(B)&\subseteq &W_C(\cF)\subseteq\co\{W_C(A),W_C(B)\}.\end{eqnarray*}
Therefore, $W_C(\cF)= \co\{W_C(A),W_C(B)\}$ which is convex.

Otherwise, we may assume that $W_C(A)$ lies on the left open half plane and $W_C(B)$ 
lies on the right open half plane. For $t\in[-\pi,\pi]$, let $L(t)$ be the support line 
of $W_C(A)$ with (outward pointing) normal $n(t)=(\cos t,\sin t)$. Let 
$P(t)=L(t)\cap W_C(A)$. Therefore, for each $t\in[-\pi,\pi]$ and $p\in P(t)$, we have 
$n(t)\cdot(w-p)\leq 0$ for all $w\in W_C(A)$. Let 
$q(t)=\min\{n(t)\cdot(w-p):w\in W_C(B),p\in P(t)\}.$ 
Then we have $q(0)>0$ and $q(-\pi)=q(\pi)<0$. Hence, there exist $-\pi<t_1<0<t_2<\pi$ 
such that $q(t_1)=q(t_2)=0$. Then $L_j=L(t_j)$ for $j=1,2$ will satisfy the requirement. 
		
We claim that $L(t_1)$ and $L(t_2)$ in (a) cannot be parallel. Otherwise, we must 
have $t_1=t_2+\pi$. Therefore, $n(t_1)=-n(t_2)$ and $W_C(A),W_C(B)\subset L_1=L_2$, 
is a  contradiction. Since $L_1$ and $L_2$ are not parallel, they intersect at 
a point $\mu$.

Now, we show that $\mu$ is a star-center of $W_C(\cF)$.
We may apply a suitable affine transform to $A$ and $B$, 
and assume that $\mu=0$, $L_1$ and $L_2$ are the $x$-axis and $y$-axis respectively. 
We may further assume that $W_C(A)$ lies in the first quadrant and $W_C(B)$ lies in the 
third quadrant. 
For any $\zeta\in W_C(\cF)$, there are $V\in\cU_n$ and $0\leq t\leq 1$
such that $\zeta=\tr(CV^*(tA+(1-t)B)V)$. Denote $\zeta_A=\tr(CV^*AV)$ and $\zeta_B=\tr(CV^*BV)$. We claim that $\conv\{\zeta_A,\zeta_B,0\}\subseteq W_C(\cF)$. Once the claim holds, we have $\{s\zeta+(1-s)\cdot 0:0\leq s\leq 1\}\subseteq \conv\{\zeta_A,\zeta_B,0\}\subseteq  W_C(\cF)$. Then the star-shapedness of $W_C(\cF)$ follows.

We now show the claim. We denote by $\overline{\zeta_1\zeta_2}$ the line segment with end points $\zeta_1,\zeta_2\in\C$. By symmetry, we may assume 
that the line segment $\overline{\zeta_A\zeta_B}$ intersects the $y$-axis at $(0,b)$ 
with $b\geq 0$. The situation is depicted in \Cref{fig4:a} and \Cref{fig5:b}.

We shall first show that $\co\{\zeta_B,bi,0 \}\subseteq W_C(\cF)$. 
Let $y_B$ be a point in the intersection of $W_C(B)$ and the $y$-axis, $U_B\in\cU_n$ 
be such that $y_B=\tr(CU_B^*BU_B)$. Let $y_A=\tr(CU_B^*AU_B)$. Then by the convexity of 
$W_C(A)$, $W_C(B)$ and $\cF$, we have
\begin{equation*} Q((\zeta_A,y_A);(\zeta_B,y_B)):=\overline{\zeta_Ay_A}\cup
\overline{\zeta_By_B}\cup \overline{\zeta_A\zeta_B}\cup\overline{y_Ay_B}\subseteq 
W_C(\cF).\end{equation*}

\begin{figure}[H]
\begin{subfigure}{0.47\textwidth}
\includegraphics[height=1.2in]{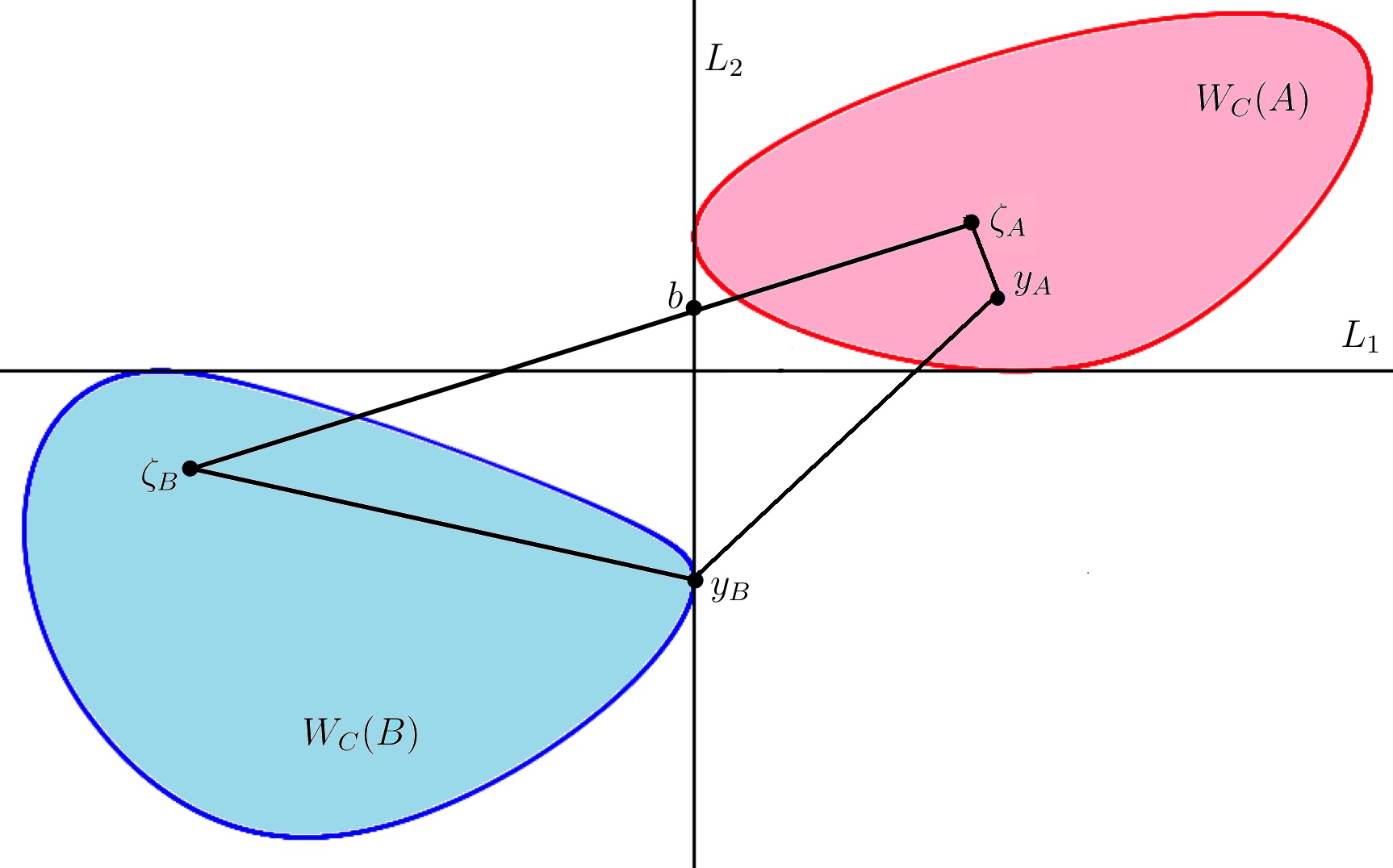}
\caption{$Q((\zeta_A,y_A);(\zeta_B,y_B))$ is a quadrilateral.}
\label{fig4:a}
\end{subfigure}\quad\quad
\begin{subfigure}{0.47\textwidth}
\includegraphics[height=1.2in]{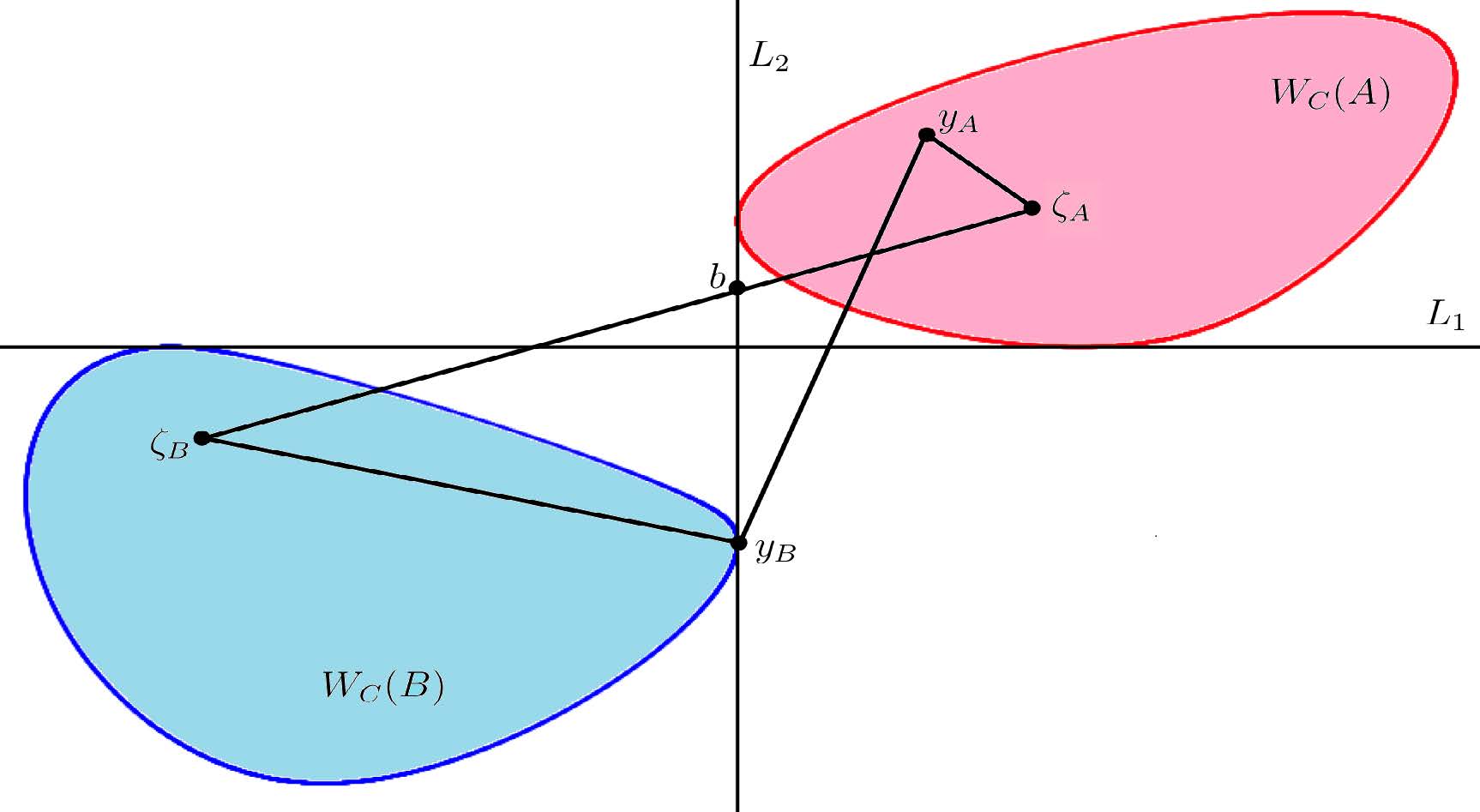}
\caption{$Q((\zeta_A,y_A);(\zeta_B,y_B))$ is a union of two triangle.}
\label{fig5:b}
\end{subfigure}
\caption{}
\end{figure}

Note that $Q((\zeta_A,y_A);(\zeta_B,y_B))$ is either a quadrilateral or a union of two triangle, see \Cref{fig4:a} and \Cref{fig5:b}.  In both case, $\triangle(\zeta_B,bi,0)$ lies inside the region determined by the closed curve 
$Q((\zeta_A,y_A);(\zeta_B,y_B))$. Now consider a continuous function $f:[0,1]\to\cU_n$ with $f(0)=V$ and $f(1)=U_B$. Then for $t\in[0,1]$, let $\zeta'_A(t)=\tr(Cf(t)^*Af(t))$ and $\zeta'_B(t)=\tr(Cf(t)^*Bf(t))$, we have
$$Q((\zeta'_A(t),y_A);(\zeta'_B(t),y_B)):=\overline{\zeta'_A(t)y_A}\cup\overline{\zeta'_B(t)y_B}\cup \overline{\zeta'_A(t)\zeta'_B(t)}\cup\overline{y_Ay_B}\subseteq W_C(\cF).$$
Note that $Q((\zeta'_A(1),y_A);(\zeta'_B(1),y_B))$ degenerates, by continuity of $f$, for any point $\zeta$ enclosed by the closed curve $Q((\zeta_A,y_A);(\zeta_B,y_B))$,  there exists $0\leq t_0\leq 1$ such that $\zeta\in Q((\zeta'_A(t_0),y_A);(\zeta'_B(t_0),y_B)) \subseteq W_C(\cF)$. Hence, $\co\{\zeta_B,bi,0\}\subseteq W_C(\cF)$. 
By symmetry, one can show that $\co\{\zeta_A,a,0\} \subseteq W_C(\cF)$ where $a$ is the intersection point of $\overline{\zeta_A\zeta_B}$ and $x$-axis. 
Hence, $\co\{\zeta_A,\zeta_B,0\}=\co\{\zeta_A,a,0\}\cup ~\co\{\zeta_B,bi,0\}\subseteq W_C(\cF)$. The claim follows.	
\end{proof}

\section{Additional results on star-shapedness and convexity}

It is not easy to extend \Cref{nonintersect}.
The following example shows that if $W_C(A)$ and $W_C(B)$ are not convex, then 
$W_C(\cF)$ may not be star-shaped for
$\cF=\conv\{A,B\}$ even when $W_C(A)\cap W_C(B) \ne \emptyset$.

\begin{example}
Let $w=e^{2\pi i/3}$ and $C=\diag(1,w,w^2)$. Suppose $A=C-\frac{1}{6}I$, $B=e^{\pi i/3} C+\frac{1}{6}I$ and $\cF=\co\{A,B\}$. 
Then $W_{C+I}\left(\cF\right)$ is not star-shaped.

\rm To prove our claim,
 for  
 $0\leq t\leq 1$,  let 
$$A(t)=tA+(1-t)B=(t+(1-t)e^{\pi i/3})C+\frac{1-2t}{6}I\,.$$
\begin{eqnarray*}
W_{C+I}(A(t))&=&W_{C+I}\left((t+(1-t)e^{\pi i/3})C+\frac{1-2t}{6}I\right)\\
&=&(t+(1-t)e^{\pi i/3})W_{C}\left(C\right)+\left(t+(1-t)e^{\pi i/3}+\frac{1-2t}{6}\right)\tr C+\frac{1-2t}{2}\\
&=&(t+(1-t)e^{\pi i/3})W_{C}\left(C\right)+\frac{1-2t}{2}.
\end{eqnarray*}

Therefore,
$$
W_{C+I}(\cF)=\bigcup \{(t+(1-t)e^{\pi i/3})W_{C}\left(C\right)+\frac{1-2t}{2}:0\le t\le 1\}\,.$$

By  \cite{Nakazato2006}, $W_{C}\left(C\right)$ is star-shaped with the origin as the unique star-center. Moreover $W_C(C)=e^{2\pi i/3}W_C(C)$ and its boundary is given by
$$\Gamma=\{ 2e^{i\theta}+e^{-2i\theta}:-\pi\leq \theta\leq \pi\}.$$

Hence, $W_{C+I}(A(t))$   is star-shaped with $\frac{1-2t}{2}$ as the unique star-center and  its boundary is given by
\begin{equation}\label{gamma}\Gamma(t)=\{(t+(1-t)e^{\pi i/3})\( 2e^{i\theta}+e^{-2i\theta}\):-\pi\leq \theta\leq \pi\}.
\end{equation}

For $0\le t\le 1$ and $-\pi\leq \theta\leq \pi$, define 

\begin{eqnarray*}
f(\theta,t)&=&(t+(1-t)e^{\pi i/3})(2e^{i\theta}+e^{-2i\theta})+\frac{1-2t}{2}\\
&=& \frac{1+t}{2}(2\cos\theta+\cos 2\theta)-\frac{1-t}{2}\sqrt{3}(2\sin\theta-\sin 2\theta)+\frac{1-2t}{2}\\
& &~+~i\left(\frac{1-t}{2}\sqrt{3}(2\cos\theta+\cos 2\theta) +\frac{1+t}{2} (2\sin\theta-\sin 2\theta)\right).\end{eqnarray*}

Using this description, we can plot $W_{C+I}(\cF)$ as follows.

First consider the plot of $\Gamma(0) $ and $\Gamma(1)$, which are the boundaries of $W_{C+I}(B)$ and $W_{C+I}(A)$ respectively, see \Cref{fig9}.

\smallskip
\begin{figure}[H]
\centering
\includegraphics[ height=4cm]{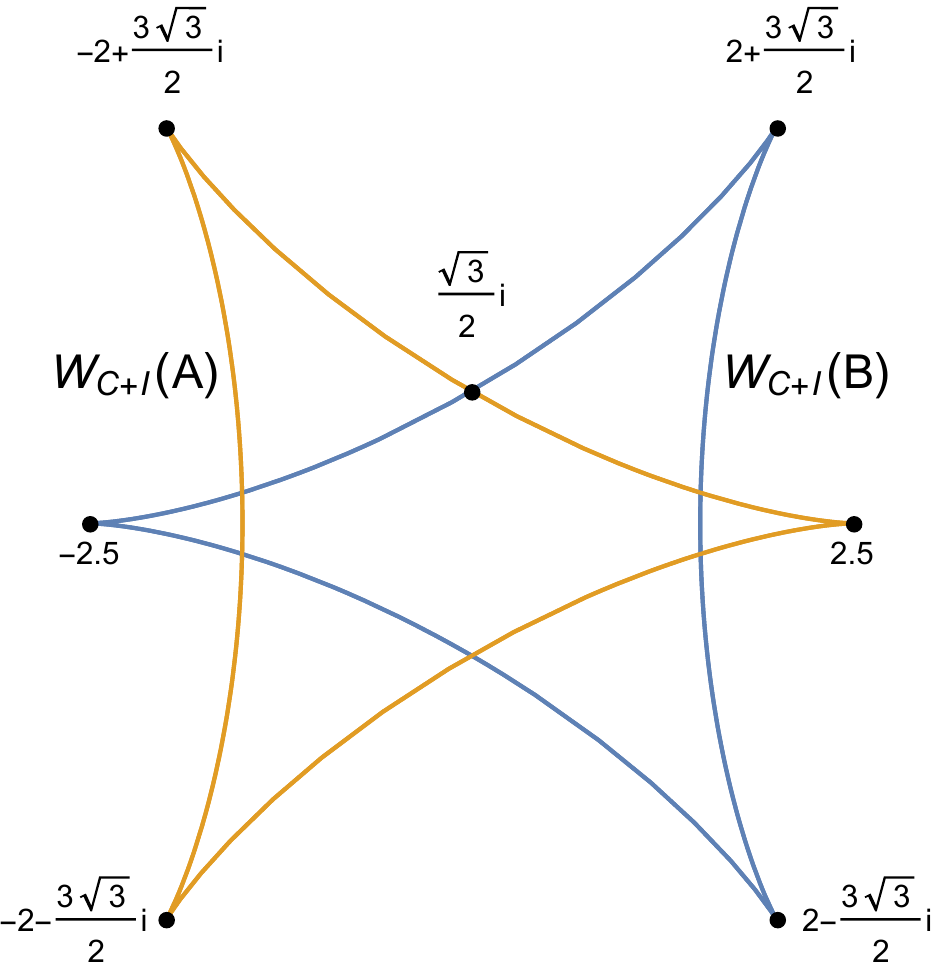} 
\caption{$W_{C+I}(A)\cup W_{C+I}(B)$}
\label{fig9}
\end{figure}

The ``vertices" of $\Gamma(0) $ and $\Gamma(1)$ are given by 
$$ 
\begin{array}{rclrclrcl}
f\(-\frac{2\pi}{3},0\) &= &  2  -\frac{3\sqrt{3}}{2}i&f(0,0) &= &  2 + \frac{3\sqrt{3}}{2}i&f\(\frac{2\pi}{3},0\) &= &  -\frac{5}{2}\\&\\
f\(-\frac{2\pi}{3},1\) &= &  -2  -\frac{3\sqrt{3}}{2}i&f(0,1) &= &  \frac{5}{2}&f\(\frac{2\pi}{3},1\) &= &  -2  + \frac{3\sqrt{3}}{2}i\,.\end{array}$$

As $t$ increases from $0$ to $1$, $\Gamma(t) $ changes from $W_{C+I}(B)$ to  $W_{C+I}(A)$, For $\theta=-\frac{2\pi}{3}, 0, \frac{2\pi}{3}$, the vertex $f(\theta,t)$ moves along the line segment from $f(\theta,0)$ to $f(\theta,1)$, see \Cref{fig7}.

\begin{figure}[H]
\centering
\includegraphics[height=5cm]{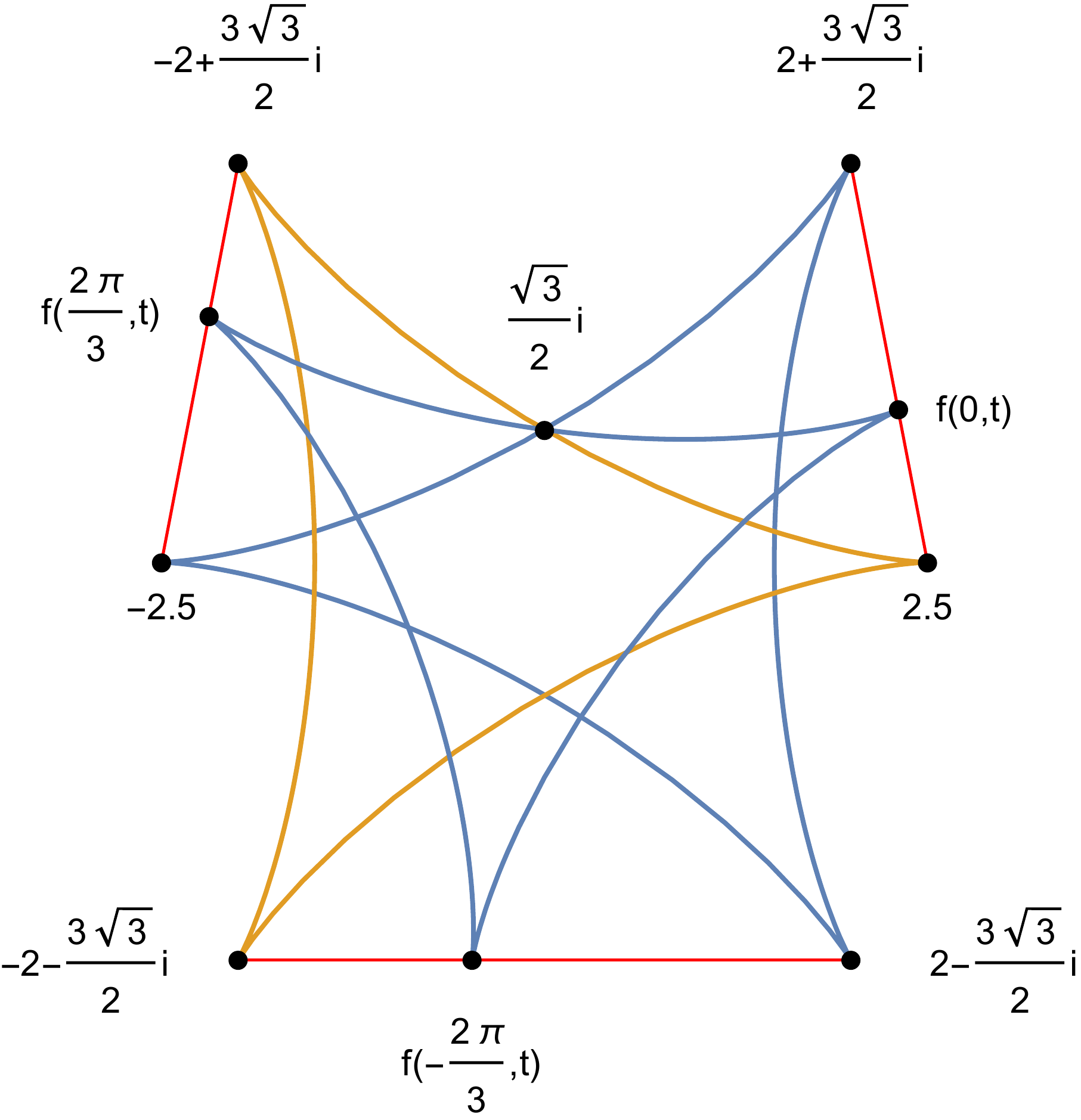} 
\caption{ $W_{C+I}(\cF)$}
\label{fig7}
\end{figure}

Therefore, we see that $W_{C+I}(\cF)$ is the union of $W_{C+I}(A)$, $W_{C+I}(B)$ and three triangles, $\triangle\(0.5,2.5,2+\frac{3\sqrt{3}}{2}i\)$,  $\triangle\(-0.5,-2.5,-2+\frac{3\sqrt{3}}{2}i\)$,   $\triangle\(\frac{\sqrt{3}}{2}i,2-\frac{3\sqrt{3}}{2}i,-2-\frac{3\sqrt{3}}{2}i\)$, see \Cref{fig6}.

\begin{figure}[H]
\centering
\includegraphics[height=5cm]{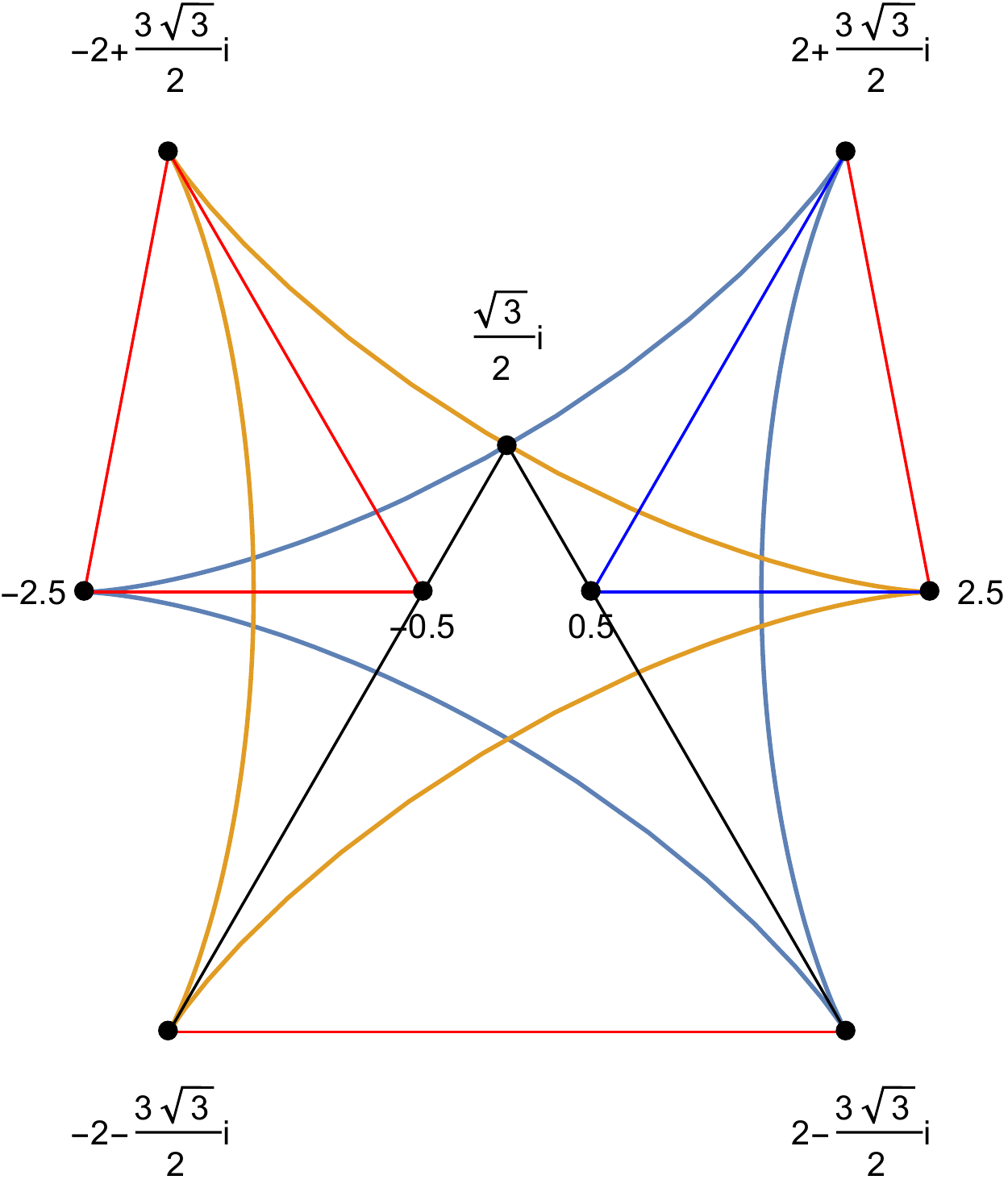} 
\caption{ $W_{C+I}(\cF)$}
\label{fig6}
\end{figure}

Suppose $ W_{C+I}(\cF)$ is star-shaped. By symmetry, $ W_{C+I}(\cF)$ has a star-center $c$ on the imaginary axis. The line
segment joining $c$ and $f(0,0)$ must lie below  the tangent line to $\Gamma(0)$ at $f(0,0)=2+\frac{3\sqrt{3}}{2}i$. By  direct calculation, this  tangent line is the line  through $0.5$ and  $f(0,0)$. 
Similarly,  The line
segment joining $c$ and $f(1,0)$ must lie above the tangent line to $\Gamma(1)$ at $f(1,0)=2.5$. By direct calculation, this  tangent line is the line  through $0.5$ and  $f(1,0)$. Since these two tangent lines intersect at $0.5$, no $c$ on the imaginary axis will satisfy the above conditions.

\end{example}

In the following, we present an example of $C, A_1, A_2, A_3 \in M_n$
such  that $W_C(A_1)$, $W_C(A_2)$, $W_C(A_3)$ are convex
and $W_C(A_1) \cap W_C(A_2) \cap W_C(A_3) = \emptyset$, but 
$W_C(\cF)$ is not star-shaped 
for $\cF = \conv\{A_1, A_2, A_3\}$.
In particular, we choose $C = E_{11}$ so that $W_C(\cF) = W(\cF)$. 

To describe our example, we first show that the set $W(\cF)$ is connected to
the concept of product numerical range arising in the study of 
quantum information theory, \cite{Karol}.

Let $A\in M_m\otimes M_n = M_m(M_n)$, the product numerical range of $A$ is 
$$W^{\otimes}(A)=\{(u\otimes v)^*A(u\otimes v):u\in \IC^m,\ v\in\IC^n\}.$$
We note that $W^{\otimes}(A)$ depends on order of the factors  $M_m$ and $ M_n$ 
in  the representation of $M_{mn}= M_m\otimes M_n$.

\medskip
\begin{theorem}\label{p} Let $A_1,\dots ,A_m\in M_n$ and $\cF=\conv(\{A_1,\dots ,A_m\})$. 
Then $$W(\cF)=W^{\otimes}(\oplus_{i=1}^mA_i).$$ 
\end{theorem}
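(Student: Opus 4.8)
The plan is to unwind both sides of the claimed identity and match them element by element. First I would write a generic element of $W(\cF)$. Since $\cF = \conv\{A_1,\dots,A_m\}$, a point $\zeta \in W(\cF)$ lies in $W(A)$ for some $A = \sum_{i=1}^m t_i A_i$ with $t_i \ge 0$, $\sum t_i = 1$; that is, $\zeta = x^*Ax$ for some unit vector $x \in \IC^n$, so $\zeta = \sum_{i=1}^m t_i\, x^*A_i x$. Thus $W(\cF) = \{\sum_{i=1}^m t_i\, x^*A_ix : t_i \ge 0, \sum t_i = 1,\ x^*x = 1\}$.

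Next I would write a generic element of $W^{\otimes}(\oplus_{i=1}^m A_i)$, viewing $\oplus_{i=1}^m A_i \in M_m \otimes M_n = M_m(M_n)$ as the block-diagonal matrix $\diag(A_1,\dots,A_m)$. A product vector $u \otimes v$ with $u = (u_1,\dots,u_m)^t \in \IC^m$, $v \in \IC^n$, both unit vectors, gives
$$(u\otimes v)^*(\oplus_i A_i)(u\otimes v) = \sum_{i=1}^m |u_i|^2\, v^*A_iv.$$
So $W^{\otimes}(\oplus_i A_i) = \{\sum_{i=1}^m |u_i|^2\, v^*A_iv : \sum_i |u_i|^2 = 1,\ v^*v = 1\}$.

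Now the two descriptions match: the correspondence is $t_i = |u_i|^2$ and $x = v$. For the inclusion $W^{\otimes}(\oplus_i A_i) \subseteq W(\cF)$, given $u,v$ set $t_i = |u_i|^2$ (these are nonnegative and sum to $1$) and $x = v$. For the reverse inclusion, given $t_i \ge 0$ with $\sum t_i = 1$ and unit $x$, set $u_i = \sqrt{t_i}$ and $v = x$; then $u \otimes v$ is a unit product vector realizing $\zeta$. Both directions are immediate from the displayed formulas, so the proof is essentially just these two expansions plus the bookkeeping identification. I do not expect a genuine obstacle here; the only point requiring a word of care is making explicit the identification $M_{mn} = M_m \otimes M_n$ under which $\oplus_{i=1}^m A_i$ corresponds to the block-diagonal element of $M_m(M_n)$, and noting (as the paper already remarks) that $W^{\otimes}$ depends on the order of the tensor factors, so the statement is with respect to this particular ordering.
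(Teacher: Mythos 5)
Your proposal is correct and follows essentially the same argument as the paper: both directions are handled by expanding $(u\otimes v)^*(\oplus_i A_i)(u\otimes v)=\sum_i |u_i|^2 v^*A_iv$ and matching $t_i=|u_i|^2$ (resp.\ $u_i=\sqrt{t_i}$, $v=x$), exactly as in the paper's proof. Your extra remark about the ordering of the tensor factors is a fine clarification but not a new idea.
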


\begin{proof} \rm Let $\mu\in W(\cF)$ Then there exist $t_1,\dots, t_m\ge 0$ with  $\sum_{i=1}^mt_i=1$ and $v\in \IC^n$ such that $\mu = v^*\(\sum_{i=1}^mt_iA_i\)v$. Let $u=(\sqrt{t_1},\dots, \sqrt{t_m})^t$. Then $\mu=(u\otimes v)^*(\oplus_{i=1}^mA_i)(u\otimes v)\in W^{\otimes}(\oplus_{i=1}^mA_i)$.

Conversely, suppose  $\mu\in  W^{\otimes}(\oplus_{i=1}^mA_i)$. Then there exist unit 
vectors $u\in \IC^m,\ v\in\IC^n$ such that 
$\mu=(u\otimes v)^*(\oplus_{i=1}^mA_i)(u\otimes v)$. Let $u=(u_1,\dots,u_m)^t$. 
Set $t_i=|u_i|^2$ for $i=1,\dots,m$. Then $\mu= v^*\(\sum_{i=1}^mt_iA_i\)v\in W(\cF)$.
\end{proof}

\medskip

Now, we describe the follow example showing that $W(\cF)$ is not necessarily star-shaped
for $\cF = \conv\{A_1, A_2, A_3\}$ if $W(A_1) \cap W(A_2) \cap W(A_3) = \emptyset$.

\medskip

\begin{example} \label{productrange}
\rm Suppose 
$A=\diag\(e^{i\frac{\pi}{3}},e^{-i\frac{\pi}{3}},0.95e^{i\frac{\pi}{4}}\)$. 
Let  $A_1=e^{i\frac{\pi}{3}}A,\ A_2=e^{-i\frac{\pi}{3}}A$ 
and $A_3=0.95e^{i\frac{\pi}{4}}A$. Then by \Cref{p}, 
$W(\conv\{A_1,A_2,A_3\})$ is equal to
$$W^{\otimes}(\oplus_{i=1}^mA_i)=W^{\otimes}(A\otimes A)=W(A)\cdot W(A)=\{\mu_1\mu_2:\mu_1,\mu_2\in W(A)\}.$$
By the result in \cite[Example 3.1]{LPW}, $W(A)\cdot W(A)$  is not star-shaped.

We note that in this example, $W(A_i)\cap W(A_j)\ne \emptyset$ for all $i,j$, but  
$W(A_1)\cap W(A_2)\cap W(A_3)= \emptyset$. Therefore, in some sense, the condition 
in \Cref{intersect_m} is optimal.
\end{example}

While \Cref{intersect_m} and \Cref{nonintersect} provide some
sufficient conditions for the star-shapedness of $W_C(\cF)$, it is a
challenging problem to determine whether $W_C(\cF)$ is star-shaped or not
for a given $C \in M_n$ and $\cF \subseteq M_n$.

\begin{proposition}\label{3.5}
Suppose
$\cF=\conv\{A_1,\dots, A_m\}\subseteq M_n$.
For each unit vector $x\in\IC^n$, let $\cA_x = \diag(x^*A_1x,\dots, x^*A_mx)$,
and let
$\hat\cF= \{\cA_x:x\in\IC^n,\ x^*x=1\}$. Then
$$W(\cF)=W(\hat\cF)
=W(\conv \hat \cF).$$
\end{proposition}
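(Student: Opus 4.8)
The plan is to prove the two equalities $W(\cF)=W(\hat\cF)$ and $W(\hat\cF)=W(\conv\hat\cF)$ separately, the second being an immediate consequence of \Cref{p} together with the definition of $\hat\cF$, and the first being essentially a change of variables exploiting that conjugating the diagonal matrix $\cA_x$ by unitaries ranges over all probability weights.

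First I would establish $W(\cF)=W(\hat\cF)$. For the inclusion $W(\cF)\subseteq W(\hat\cF)$, take $\mu\in W(\cF)$; by definition there are $t_1,\dots,t_m\ge 0$ with $\sum t_i=1$ and a unit vector $x\in\IC^n$ with $\mu=x^*\bigl(\sum_i t_iA_i\bigr)x=\sum_i t_i\,(x^*A_ix)$. Thus $\mu=\sum_i t_i d_i$ where $d_i=x^*A_ix$ is the $i$-th diagonal entry of $\cA_x$. Now $\sum_i t_i d_i$ is a convex combination of the diagonal entries of $\cA_x$, and a standard fact about the classical numerical range of a diagonal (hence normal) matrix gives $\{\sum_i t_i d_i : t_i\ge 0,\ \sum t_i=1\}=\conv\{d_1,\dots,d_m\}=W(\cA_x)$. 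Hence $\mu\in W(\cA_x)\subseteq W(\hat\cF)$. Conversely, if $\mu\in W(\hat\cF)$ then $\mu\in W(\cA_x)$ for some unit $x$, so again $\mu=\sum_i t_i(x^*A_ix)=x^*\bigl(\sum_i t_iA_i\bigr)x$ for some probability vector $(t_i)$, and since $\sum_i t_iA_i\in\cF$ we get $\mu\in W(\cF)$. This settles the first equality.

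Next, $W(\hat\cF)=W(\conv\hat\cF)$. Since $\hat\cF\subseteq\conv\hat\cF$ we trivially have $W(\hat\cF)\subseteq W(\conv\hat\cF)$. For the reverse inclusion, apply \Cref{p}: writing $\conv\hat\cF$ as a convex hull of the family $\{\cA_x\}$, membership in $W(\conv\hat\cF)$ means $\mu=y^*\bigl(\sum_j s_j \cA_{x_j}\bigr)y$ for finitely many unit vectors $x_j$, weights $s_j\ge0$ summing to $1$, and a unit vector $y\in\IC^n$. But each $\cA_{x_j}$ is diagonal, so $\sum_j s_j\cA_{x_j}=\diag\bigl(\sum_j s_j x_j^*A_1x_j,\dots,\sum_j s_j x_j^*A_mx_j\bigr)$, and by convexity of the numerical range $W(A_i)$ each coordinate $\sum_j s_j x_j^*A_ix_j$ lies in $W(A_i)$, hence equals $z^*A_iz$ for a common — wait, not necessarily common — unit vector; here one must be slightly careful. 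Instead, observe directly that $\sum_j s_j\cA_{x_j}$ has $i$-th diagonal entry $\sum_j s_j x_j^*A_ix_j\in W(A_i)=\{z^*A_iz: z^*z=1\}$, and then $\mu=y^*\bigl(\sum_j s_j\cA_{x_j}\bigr)y$ is a convex combination of these diagonal entries, so $\mu\in\conv\bigl(\bigcup_i W(A_i)\bigr)$. To land back in $W(\cF)$, note $\conv(\bigcup_i W(A_i))=W(\conv\{A_1,\dots,A_m\})=W(\cF)$: indeed "$\subseteq$" holds because each $W(A_i)\subseteq W(\cF)$ and $W(\cF)$ is... not convex in general, so this needs the cleaner route — use \Cref{p} in the form $W(\cF)=W^\otimes(\oplus_i A_i)$ and note that $\mu=y^*(\sum_j s_j\cA_{x_j})y$ with $\cA_{x_j}=\diag(x_j^*A_1x_j,\dots)$ exhibits $\mu$ as an average over $j$ of points $y^*\cA_{x_j}y\in W(\cA_{x_j})=W(\hat\cF)$-elements; combining this with the first equality $W(\hat\cF)=W(\cF)=W^\otimes(\oplus A_i)$ and the fact that product numerical ranges absorb such averaging (via the tensor construction in the proof of \Cref{p}, replacing $u$ by a vector encoding both the $s_j$ and the structure of the $x_j$) closes the loop.

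The main obstacle is precisely the last step: showing $W(\conv\hat\cF)\subseteq W(\hat\cF)$ without circularity. The cleanest fix is to route everything through \Cref{p}. Concretely: $W(\conv\hat\cF)=W^\otimes(\bigoplus_{x}\cA_x)$ over the relevant finite sub-families, and because each $\cA_x$ is itself diagonal with entries in $W(A_1),\dots,W(A_m)$, a point of $W^\otimes(\bigoplus_x\cA_x)$ is a convex combination of numbers each of which is a convex combination of entries $x^*A_ix$; collapsing the double convex combination and re-choosing a single probability vector $(t_i)$ together with, for each $i$, a unit vector realizing $\sum(\text{weights})\,x^*A_ix\in W(A_i)$, one writes the point as $\sum_i t_i z_i^*A_iz_i$ — and then one more application of the tensor trick in \Cref{p} (using $u=(\sqrt{t_1}z_1,\dots,\sqrt{t_m}z_m)$ suitably normalized inside $\IC^{mn}$, i.e. working in $\oplus_i\IC^n$ rather than $\IC^m\otimes\IC^n$) realizes it as an element of $W(\cF)=W(\hat\cF)$. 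I expect this bookkeeping to be the only nontrivial part; the rest is definition-chasing.
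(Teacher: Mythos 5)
Your first equality $W(\cF)=W(\hat\cF)$ is fine, and $W(\hat\cF)\subseteq W(\conv\hat\cF)$ is trivial, but the remaining inclusion $W(\conv\hat\cF)\subseteq W(\cF)$ is exactly where your argument breaks down, and the fixes you sketch do not close the gap. After writing $\mu=y^*\bigl(\sum_j s_j\cA_{x_j}\bigr)y$, you estimate each diagonal entry $\sum_j s_j x_j^*A_ix_j$ separately as some point of $W(A_i)$, i.e.\ as $z_i^*A_iz_i$ with a \emph{different} unit vector $z_i$ for each $i$. This only yields $\mu\in\conv\bigl(\cup_i W(A_i)\bigr)$, which is in general strictly larger than $W(\cF)$ (see \Cref{exam24}: for $\cF=\conv\{A,-A\}$ with $A=\diag(1+i,1-i)$, $W(\cF)$ is the bow-tie, not the square $\conv(W(A)\cup W(-A))$), so no amount of bookkeeping starting from the decoupled form $\sum_i t_i z_i^*A_iz_i$ can put $\mu$ back into $W(\cF)$: such points genuinely lie outside $W(\cF)$ in general. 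Your proposed tensor fix fails for the same reason: the vector $(\sqrt{t_1}z_1,\dots,\sqrt{t_m}z_m)\in\oplus_i\IC^n$ is decomposable, i.e.\ of the form $u\otimes v$, only when all the $z_i$ are proportional to a common $v$; otherwise it certifies membership only in the ordinary numerical range $W(\oplus_iA_i)=\conv\bigl(\cup_iW(A_i)\bigr)$, not in the product numerical range $W^{\otimes}(\oplus_iA_i)=W(\cF)$ of \Cref{p}, and product numerical ranges do not ``absorb averaging'' (they need not even be convex).

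The missing idea is to keep the vectors $x_j$ and instead interchange the two convex combinations. Writing $y=(y_1,\dots,y_m)^t$,
$$\mu=\sum_{\ell=1}^m|y_\ell|^2\sum_j s_j\,x_j^*A_\ell x_j=\sum_j s_j\,x_j^*\Bigl(\sum_{\ell=1}^m|y_\ell|^2A_\ell\Bigr)x_j=\sum_j s_j\,x_j^*A_0x_j,$$
where $A_0=\sum_\ell|y_\ell|^2A_\ell$ is a \emph{single} matrix in $\cF$. Each $x_j^*A_0x_j\in W(A_0)$, and by the Toeplitz--Hausdorff theorem $W(A_0)$ is convex, so $\mu\in W(A_0)\subseteq W(\cF)$. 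The crucial point your decoupling destroys is that all $m$ diagonal entries of $\cA_{x_j}$ are evaluated at the same vector $x_j$; exploiting that correlation is what lets the double convex combination collapse onto one matrix of $\cF$, and it makes the detour through \Cref{p} unnecessary.
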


\begin{proof} Let $\mu \in W(\cF)$. Then there exist $t_1,\dots,t_m\ge 0$,
$t_1+\cdots +t_m=1$ and  a unit vector $x\in \IC^n$ such that
$\mu=x^*(t_1A_1+\cdots +t_mA_m)x= t_1x^*A_1x+\cdots +t_mx^*A_mx\in W(\hat\cF)$. Therefore, $W(\cF)\subseteq W(\hat\cF)$.

Clearly, $W(\hat \cF) \subseteq W(\conv \hat \cF)$.
Finally, we show that $W(\conv\hat \cF) \subseteq W(\cF)$.
To see this, let
$\mu \in W(\conv\hat\cF)$, i.e.,
$$\mu = y^*\left(\sum_{j=1}^r t_j D_j \right)y $$
for a unit vector
$y = (y_1, \dots, y_m)^t$,   some
$D_j = \diag (x_j^*A_1x_j, \dots, x_j^*A_mx_j)\in \hat \cF$
with  $j = 1, \dots, r$  and $t_1,\dots,t_r>0, \ t_1+\cdots + t_r=1$.
Thus,
\begin{eqnarray*}
\mu & = & \sum_{\ell=1}^m |y_\ell|^2
\sum_{j= 1}^r t_j x_j^* A_\ell  x_j 
= \sum_{j=1}^r t_j \left(x_j^*\left(\sum_{\ell=1}^m |y_\ell|^2 A_\ell \right)x_j\right) \\
&\in& \conv(W(A_0)) = W(A_0) \subseteq W(\cF)
\qquad
\end{eqnarray*}
where  $A_0 = \sum_{\ell=1}^m|y_\ell|^2A_\ell\in \cF$.
\end{proof}

\begin{example} \rm Suppose that $A_1=\diag\(1,e^{i\frac{\pi}{3}}\)$, $A_2=\diag\(e^{i\frac{2\pi}{3}},e^{i\pi}\)$, $A_2=\diag\(e^{i\frac{4\pi}{3}},e^{i\frac{5\pi}{3}}\)$ and $\cF=\conv\{A_1,A_2,A_3\}$. Then $ W(A_1)\cap W(A_2)\cap W(A_3)=\emptyset$. So the condition in \Cref{intersect_m} is not satisfied. However, by direct computation, we have
 $$\hat\cF=\left\{\diag\(t+(1-t)e^{i\frac{\pi}{3}},e^{i\frac{2\pi}{3}}( t+(1-t)e^{i\frac{\pi}{3}} ),e^{i \frac{4\pi}{3}} ( t+(1-t)e^{i\frac{\pi}{3}} )\):0\le t\le 1\right\}\,,$$
where $\hat\cF$ is defined as in \Cref{3.5}.
Therefore, $0\in \cap_{A\in \hat\cF} W(A)$. Hence, by \Cref{3.5} and \Cref{intersect_m}, $W(\cF)=W(\hat \cF)$ is star-shaped.
\end{example}

Recall that $W_C(\cF)$ may fail to be convex even if $\cF$ is convex and $W_C(A)$ is convex for all $A\in\cF$.
In the following, we give a necessary and sufficient condition for $W_C(\cF)$ to be convex. First, it is easy to see that $W_C(A) \subseteq W_C(\cF)$ for every $A \in \cF$.
Thus, $\conv \{\cup_{A \in \cF} W_C(A)\}$ is the smallest convex set containing
$W_C(\cF)$. As a result, we have the following observation.

\begin{proposition}\label{c1.2}
Let $C\in M_n$ and $\cF \subseteq M_n$. Then $W_C(\cF)$ is convex if and only if
$$W_C(\cF) = \conv\left\{\bigcup_{A \in \cF} W_C(A)\right\}.$$
\end{proposition}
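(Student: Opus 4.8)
The plan is to reduce the statement to the elementary fact that a subset $S$ of $\IC$ (identified with $\IR^2$) is convex if and only if $S=\conv S$. The only thing to unwind is the definition of $W_C(\cF)$. First I would record the observation, already noted just before the proposition, that by definition $W_C(\cF)=\bigcup_{A\in\cF}W_C(A)$, so that $\conv\bigl\{\bigcup_{A\in\cF}W_C(A)\bigr\}=\conv\{W_C(\cF)\}$ is precisely the smallest convex set containing $W_C(\cF)$; in particular $W_C(\cF)\subseteq \conv\bigl\{\bigcup_{A\in\cF}W_C(A)\bigr\}$ always holds.

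For the forward implication, I would suppose $W_C(\cF)$ is convex. Since $\conv\{W_C(\cF)\}$ is the intersection of all convex sets containing $W_C(\cF)$, and $W_C(\cF)$ is itself such a set, we obtain $\conv\{W_C(\cF)\}\subseteq W_C(\cF)$. Combined with the automatic reverse inclusion this gives $W_C(\cF)=\conv\{W_C(\cF)\}=\conv\bigl\{\bigcup_{A\in\cF}W_C(A)\bigr\}$. For the converse, if $W_C(\cF)=\conv\bigl\{\bigcup_{A\in\cF}W_C(A)\bigr\}$, then $W_C(\cF)$ is equal to a convex hull, hence convex.

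There is no genuine obstacle in this argument; the entire content is the identity $\bigcup_{A\in\cF}W_C(A)=W_C(\cF)$, which follows from the inclusions $W_C(A)\subseteq W_C(\cF)$ for each $A\in\cF$. I would present the proposition as a short formal observation recording that convexity of $W_C(\cF)$ is equivalent to $W_C(\cF)$ coinciding with its own convex hull, which is a convenient reformulation to be used in the sequel.
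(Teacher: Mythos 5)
Your argument is correct and is essentially the paper's own: the proposition is presented there as an immediate observation following from the remark that $\bigcup_{A\in\cF}W_C(A)=W_C(\cF)$, so that $\conv\bigl\{\bigcup_{A\in\cF}W_C(A)\bigr\}$ is the smallest convex set containing $W_C(\cF)$, and convexity of a set is equivalent to its coinciding with its convex hull. Nothing is missing; your write-up just makes explicit the one-line reasoning the paper leaves implicit.
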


\begin{proposition} \label{2.3.2}
Suppose $C\in M_n$, $\cG \subseteq M_n$ and $\cF = \conv\cG$ with $|\cG| \ge 3$.
Then
for every $\mu \in W_C(\cF)$,  there are matrices $A_1, A_2, A_3 \in \cG$
and a unitary matrix $U$ such that $\mu \in \conv\{\tr (CU^*A_1U),\tr (CU^*A_2U), \tr (CU^*A_3U)\}$.
If $\mu$ is a boundary point of $W_C(\cF)$, then
there are $B_1, B_2 \in \cG$
and a unitary $V$ such that $\mu \in \conv\{\tr (CV^*B_1V), \tr (CV^*B_2V)\}$.
\end{proposition}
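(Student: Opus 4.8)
The plan is to exploit the representation of a point of $W_C(\cF)$ in terms of a convex combination of matrices from $\cG$, together with a Carath\'eodory-type argument applied inside a single plane. Let $\mu \in W_C(\cF)$. By definition of $\cF = \conv\cG$ there are finitely many matrices $A_1, \dots, A_k \in \cG$, scalars $t_1, \dots, t_k \ge 0$ with $\sum_i t_i = 1$, and a unitary $U$ such that
$$\mu = \tr\!\left(CU^*\Big(\sum_{i=1}^k t_i A_i\Big)U\right) = \sum_{i=1}^k t_i\, \tr(CU^*A_iU).$$
Writing $\zeta_i = \tr(CU^*A_iU) \in W_C(A_i) \subseteq W_C(\cF)$, this says $\mu \in \conv\{\zeta_1, \dots, \zeta_k\}$, a convex combination of points lying in the plane $\IC \cong \IR^2$. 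By Carath\'eodory's theorem in $\IR^2$, $\mu$ is a convex combination of at most three of the $\zeta_i$; relabelling, $\mu \in \conv\{\zeta_{i_1}, \zeta_{i_2}, \zeta_{i_3}\}$ with each $\zeta_{i_j} = \tr(CU^*A_{i_j}U)$ and the \emph{same} unitary $U$. Since $|\cG| \ge 3$ we may, if necessary, pad the list to exactly three matrices (repeating one of them does no harm), giving the first assertion.

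For the second assertion, suppose $\mu$ is a boundary point of $W_C(\cF)$. I would argue that in this case $\mu$ must already lie on the boundary of the planar polytope $\conv\{\zeta_{i_1}, \zeta_{i_2}, \zeta_{i_3}\}$: indeed, the triangle (including interior) $\conv\{\zeta_{i_1}, \zeta_{i_2}, \zeta_{i_3}\}$ is contained in $W_C(\cF)$ — this is exactly the content of Lemma~\ref{commoncenter2}'s geometry when a common star-center is available, but more directly here each edge $\conv\{\zeta_{i_a},\zeta_{i_b}\}$ equals $\{\tr(CU^*(sA_{i_a}+(1-s)A_{i_b})U): 0\le s\le 1\} \subseteq W_C(\cF)$, and in fact the full triangle is covered because for any point in it we can write it as $\tr(CU^*(\sum s_a A_{i_a})U)$ with $\sum s_a = 1$, $s_a \ge 0$, and $\sum s_a A_{i_a} \in \cF$. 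So $\conv\{\zeta_{i_1}, \zeta_{i_2}, \zeta_{i_3}\} \subseteq W_C(\cF)$, and since $\mu$ is a boundary point of $W_C(\cF)$ it cannot be interior to this triangle; hence $\mu$ lies on one of its edges, say $\mu \in \conv\{\zeta_{i_1}, \zeta_{i_2}\}$. Setting $B_1 = A_{i_1}$, $B_2 = A_{i_2}$, $V = U$ gives the claim.

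The step I expect to require the most care is the assertion that the \emph{entire} solid triangle $\conv\{\zeta_{i_1},\zeta_{i_2},\zeta_{i_3}\}$ sits inside $W_C(\cF)$ — one must check that every convex combination $\sum_a s_a \zeta_{i_a}$ (not just those arising from the original weights $t_i$) is realized, which follows because $\tr(CV^*(\cdot)V)$ is affine in its matrix argument and $\conv\{A_{i_1},A_{i_2},A_{i_3}\}\subseteq\conv\cG=\cF$, so $\sum_a s_a \zeta_{i_a} = \tr(CU^*(\sum_a s_a A_{i_a})U) \in W_C(\sum_a s_a A_{i_a}) \subseteq W_C(\cF)$. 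Once this is in hand, both conclusions are immediate consequences of Carath\'eodory's theorem in the plane and the definition of a boundary point. No appeal to convexity of the individual $W_C(A_i)$ is needed.
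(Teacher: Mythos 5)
Your argument is correct and follows essentially the same route as the paper: write $\mu=\sum_i t_i\tr(CU^*A_iU)$ with a single unitary $U$, apply Carath\'eodory's theorem in the plane to reduce to three points, and for a boundary point note that the full convex hull of the points $\tr(CU^*A_iU)$ lies in $W_C(\cF)$ (since $\sum_i s_iA_i\in\cF$ and the map is affine), forcing $\mu$ onto an edge and hence into the convex hull of two of the points. Your only addition is to spell out explicitly the containment of that convex hull in $W_C(\cF)$, which the paper leaves implicit.
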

\begin{proof}
For any $\mu \in W_C(\cF)$, we have
$\mu = t_1\tr (CU^*A_1U) + \cdots + t_r \tr (CU^*A_rU)$ for some
unitary matrix $U$ and $A_1, \dots, A_r \in \cG$ with $t_1, \dots, t_r > 0$ summing up
to 1. So, $\mu \in \conv\{\tr (CU^*A_1U), \dots, \tr (CU^*A_rU)\}$.
Thus, $\mu$ lies in the convex hull of no more than
three of the points in $\{\tr (CU^*A_1U), \dots, \tr (CU^*A_rU)\}$.
In case $\mu$ is a boundary point of $W_C(\cF)$, then
$\mu$ must be a boundary point of the set
$\conv\{\tr (CU^*A_1U), \dots, \tr (CU^*A_rU)\}$.
Thus, $\mu$ list in the convex hull of no more than two
points in the set  $\{\tr (CU^*A_1U), \dots, \tr (CU^*A_rU)\}$. 
The assertion follows. \end{proof}

\begin{theorem}\label{c2.2}
Suppose $C\in M_n$, $\cG\subseteq M_n$  is compact and $\cF=\co(\cG)$.
The following are equivalent.
\begin{enumerate}
\item[{\rm (a)}]  $W_C(\cF)$ is convex.
\item[{\rm (b)}] $W_C(\cF)$ is simply connected and
 every boundary point $\mu \in \conv W_C(\cF)$ has the form
$\tr (CU^*(tA + (1-t)B)U)$ for some unitary matrix $U$, $t \in [0,1]$ and
$A, B \in \cG$.
\end{enumerate}
\end{theorem}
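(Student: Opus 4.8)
The plan is to prove the two directions separately, with (a) $\Rightarrow$ (b) being essentially immediate and (b) $\Rightarrow$ (a) carrying all the content. First, assume $W_C(\cF)$ is convex. A convex subset of $\IC = \IR^2$ is simply connected, so the first half of (b) holds trivially. For the second half, let $\mu$ be a boundary point of $W_C(\cF) = \conv W_C(\cF)$. Since $\cG$ is compact, $\cF = \conv\cG$ is compact, and hence $W_C(\cF)$ is compact by \Cref{T3.1}(e). Now I apply \Cref{2.3.2}: every boundary point of $W_C(\cF)$ lies in $\conv\{\tr(CV^*B_1V), \tr(CV^*B_2V)\}$ for some $B_1, B_2 \in \cG$ and unitary $V$, which is exactly the statement that $\mu = \tr(CV^*(tB_1 + (1-t)B_2)V)$ for some $t \in [0,1]$. (One subtlety: \Cref{2.3.2} is stated for $|\cG| \ge 3$; if $|\cG| \le 2$ the conclusion of (b) holds directly since then $\cF = \conv\{A,B\}$ and every point, not just boundary points, has the desired form. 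I will handle this degenerate case in one line.)

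The substantive direction is (b) $\Rightarrow$ (a). Suppose $W_C(\cF)$ is simply connected and every boundary point of $\conv W_C(\cF)$ has the stated two-matrix form. By \Cref{c1.2}, it suffices to show $W_C(\cF) = \conv\{\bigcup_{A \in \cF} W_C(A)\}$; equivalently, writing $K = \conv W_C(\cF)$, I must show $K \subseteq W_C(\cF)$ (the reverse inclusion is automatic). The strategy is to show that $K$ and $W_C(\cF)$ have the same boundary, and then use simple connectedness to conclude they are equal. The key step: let $\mu \in \partial K$. By hypothesis, $\mu = \tr(CV^*(tA + (1-t)B)V)$ for some unitary $V$, $t \in [0,1]$, $A, B \in \cG$. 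Since $tA + (1-t)B \in \cF$, we get $\mu \in W_C(\cF)$. Hence $\partial K \subseteq W_C(\cF)$.

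Now I combine the two facts: $W_C(\cF)$ is a compact, simply connected subset of $\IR^2$ whose boundary contains $\partial K$, where $K = \conv W_C(\cF) \supseteq W_C(\cF)$. Since $K$ is a compact convex set in $\IR^2$, its interior $\mathrm{int}\,K$ is connected and $K = \mathrm{int}\,K \cup \partial K$. I claim $\mathrm{int}\,K \subseteq W_C(\cF)$. Suppose not; pick $p \in \mathrm{int}\,K \setminus W_C(\cF)$. Since $W_C(\cF)$ is closed, there is an open disk $D$ around $p$ disjoint from $W_C(\cF)$; shrink $D$ so that $D \subseteq \mathrm{int}\,K$. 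Then $\partial K \subseteq W_C(\cF)$ means the whole boundary of $K$ lies in $W_C(\cF)$, so $W_C(\cF)$ contains a loop encircling $p$ (the boundary of $K$) but not $p$ itself — this is where simple connectedness of $W_C(\cF)$ gives a contradiction, since such a loop in a simply connected planar set must have its interior contained in the set. More carefully: $W_C(\cF) \subseteq K$, and $W_C(\cF)$ contains $\partial K$; if some interior point $p$ of $K$ is missing from $W_C(\cF)$, then $W_C(\cF)$ is not simply connected, because the loop $\partial K \subseteq W_C(\cF)$ cannot be contracted within $W_C(\cF)$ (it has nonzero winding number around $p \notin W_C(\cF)$). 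This contradiction forces $\mathrm{int}\,K \subseteq W_C(\cF)$, hence $K \subseteq W_C(\cF)$, hence $K = W_C(\cF)$ is convex.

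The main obstacle I anticipate is making the topological step fully rigorous — specifically, justifying "a loop in a simply connected planar set bounds a disk contained in the set." The cleanest route is: a simply connected open (or nice closed) subset of $\IR^2$ has trivial first homology / fundamental group, so any loop in it is null-homotopic, and by standard plane topology (e.g. the Jordan curve theorem applied to $\partial K$, which is a simple closed curve since $K$ is a compact convex body with nonempty interior) a null-homotopic loop cannot have a point of nonzero winding number in its complement within the set. One must also dispose of the low-dimensional degenerate cases where $K$ has empty interior (i.e. $W_C(\cF)$ lies on a line), but there $\partial K = K$ in the relevant sense and $K = W_C(\cF)$ follows because $W_C(\cF)$ is a connected — by \Cref{T3.1}(d), as $\cF$ is convex hence connected — compact subset of a line containing all the "extreme" two-matrix points, so it is the whole segment $K$. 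I will state these degenerate reductions briefly and concentrate the write-up on the planar case.
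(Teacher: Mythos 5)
Your proposal is correct and follows essentially the same route as the paper: (a)$\Rightarrow$(b) via \Cref{2.3.2} applied to the boundary of $W_C(\cF)=\conv W_C(\cF)$, and (b)$\Rightarrow$(a) by reducing, via \Cref{c1.2}, to showing that every boundary point of $\conv W_C(\cF)$ lies in $W_C(\cF)$. The only difference is that you make explicit the topological step (winding number of $\partial K$ around a missing interior point contradicting simple connectedness) and the degenerate cases, which the paper's very terse converse leaves implicit.
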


\begin{proof} Suppose $W_C(\cF)$ is convex. Then it is clearly simply connected.
Now, $\conv W_C(\cF)$ and $W_C(\cF)$ have the same boundary. By
\Cref{2.3.2}, every boundary point of $W_C(\cF)$ is a convex combination of
$\tr (CU^*AU),\tr (CU^*BU) $ with $A,B \in \cG$.

Conversely, suppose (b) holds. We only need to show that each boundary
point $\mu$ of $\conv(W_C(\cF))$ lies in $W_C(\cF)$, which is true by
\Cref{2.3.2} and assumption (b). \end{proof}

\section{Extension to the joint $C$-numerical range}

Let $A = A_1+iA_2 \in M_n$, where $A_1, A_2 \in M_n$ are Hermitian matrices.
Then $W(A)$ can be identified as the joint numerical range of 
$(A_1, A_2)$ defined by
$$W(A_1,A_2) = \{ (x^*A_1x, x^*A_2x): x \in \IC^n, x^*x = 1\}.$$
One may consider whether our results can be extended to the joint 
numerical range of an $m$-tuple of Hermitian matrices $ (A_1, \dots, A_m)$
defined by
$$W(A_1, \dots, A_m) = \{(x^*A_1x, \dots, x^*A_mx): x \in \IC^n, x^*x = 1\}.$$

Some of the results on classical numerical range 
are not valid for the joint numerical range. 
For instance, the joint numerical range of three 
matrices may not be convex if $n = 2$.
For $A_1 = \begin{pmatrix} 0 & 1 \cr 1 & 0 \cr\end{pmatrix}, 
A_2 = \begin{pmatrix} 0 & -i \cr i & 0 \cr\end{pmatrix},
A_3 = \begin{pmatrix} 1 & 0 \cr 0 & -1 \cr\end{pmatrix},$ 
we have
$$W(A_1, A_2, A_3) = \{(a,b,c): a, b, c \in \IR, a^2+b^2 + c^2=1\}.$$
The following is known, see \cite{YHAuYeung1979,LP}.

\begin{enumerate}
\item Suppose $A_1, A_2, A_3\in M_2$ are Hermitian matrices such that 
$\{I_2, A_1, A_2, A_3\}$ is linearly independent.
Then $W(A_1,A_2,A_3)$ is an ellipsoid without interior in $\IR^3$.

\item  If $n\ge 3$ and $A_1, A_2, A_3 \in M_n$ 
are Hermitian matrices, then
$W(A_1,A_2,A_3)$ is convex.

\item Suppose $A_1, A_2, A_3 \in M_n$ such that
$\{I_2, A_1, A_2, A_3\}$ is linearly independent. Then there is 
$A_4 \in M_n$ such that $W(A_1, A_2, A_3, A_4)$ is not convex. 

\end{enumerate}

There has been study of topological and 
geometrical properties of $W(A_1, \dots, A_m)$. 
Researchers also consider the joint $C$-numerical range
of $(A_1, \dots, A_m)$ defined by 
\begin{equation}
\label{JWCA}
W_C(A_1, \dots, A_m) = \{ (\tr(CU^*A_1U), \dots, \tr(CU^*A_m U)): 
U \hbox{ unitary}\}
\end{equation}
for a Hermitian matrix $C$, for example \cite{Chien2013,MDChoi2003}. In particular, it is known that 
\break $W_C(A_1, A_2, A_3)$ is convex for any Hermitian matrices 
$C, A_1, A_2, A_3 \in M_n$ if $n \ge 3$, see~\cite{YHAuYeung1983b}.
Of course, one can also consider the $C$-numerical range of 
$(A_1, \dots, A_m)$ for general matrices $C, A_1, \dots, A_m \in M_n$
defined as in \Cref{JWCA}.

Denote by $\bA = (A_1, \dots, A_m)$ an $m$-tuple of matrices in 
$M_n$. Let $\bF$ be a non-empty subset of  $M_n^m$. We consider 
$$W_C(\bF) =  \bigcup \{W_C(\bA): \bA \in \bF \}.$$
Evidently, when $\bF = \{\bA\}$, then $W_C(\bF) = W_C(\bA)$.

\subsection{Basic results}

We begin with the following results.

\begin{proposition} \label{P3.1} Let
$C \in M_n$ be non-scalar, and let 
$\bF$ be a non-empty subset of $M^m_n$.

\begin{enumerate}
\item For any unitary $U, V \in M_n$, 
we have $W_C(\bF) =  W_{V^*CV}(U^*\bF U)$, where
$$U^*\bF  U = \{ (U^*A_1U, \dots, U^*A_mU): (A_1, \dots, A_m) \in \bF\}.$$

\item Let $\gamma_1, \gamma_2 \in \IC$ with $\gamma_1 \ne 0$.
If $\hat C = \gamma_1 C + \gamma_2 I_n$, then for any 
$\bA = (A_1, \dots, A_m) \in M_n^m$,
$$W_{\hat C}(\bA)
= \{ \gamma_1(a_1, \dots, a_m) + \gamma_2 (\tr A_1, \dots, \tr A_m):
(a_1,\dots, a_m) \in W_C(\bA)\}.$$

\item For any $T = (t_{ij}) \in M_m$ and $f = (f_1, \dots, f_m)^t$, we can define
an affine map $R$ on $\IC^m$ by $v \mapsto Tv + f$,
and  extend the affine map to $M_n^m$ by mapping $\bA=(A_1, \dots, A_m)$ to 
$\bB = (B_1, \dots, B_m)$ with 
$B_i = \sum_{j=1}^n t_{ij}A_j + f_i I_n$.
Then 
$$W_C(\bB)  = \{ (b_1, \dots, b_m): 
(b_1, \dots, b_m)^t = T(a_1, \dots, a_n)^t + \hskip .7in \ 
$$
$$
\hskip 1.5in (\tr C) (f_1, \dots, f_m)^t,
(a_1, \dots, a_m) \in W_C(\bA)\}.$$
Consequently,  
$$R(W_C(\bF)) = W_C(R(\bF)).$$
 
\item The linear span of $\{A_j-(\tr A_j)/n: j = 1, \dots, m\}$
has dimension $k$ if and only if $W_C(\bF) \subseteq \bV + f$
for a $k$-dimensional subspace $\bV \subseteq \IC^m$ and 
a vector $f \in \IC^m$. In particular, 
$W_C(\bF)$ is a singleton $\{(\nu_1, \dots, \nu_n)\}$ if and only if
$\cF = \{(\mu_1 I, \dots, \mu_m I)\}$ with 
$(\tr C)(\mu_1, \dots, \mu_m) = (\nu_1, \dots, \nu_n)$.

\end{enumerate}
\end{proposition}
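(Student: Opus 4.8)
\medskip\noindent\textbf{Proof proposal.}
Parts (1)--(3) are formal consequences of the linearity and cyclicity of the trace, verified coordinate-by-coordinate and then unioned over the tuples in $\bF$. For (1), fix unitaries $U,V$; for each unitary $W$ the substitution $W\mapsto Z:=UWV^{*}$ is a bijection of $\cU_{n}$ onto itself, and one use of cyclicity gives, simultaneously for every $j$, $\tr\big((V^{*}CV)W^{*}(U^{*}A_{j}U)W\big)=\tr(CZ^{*}A_{j}Z)$, whence $W_{V^{*}CV}(U^{*}\bA U)=W_{C}(\bA)$ and then (1). For (2), $\tr\big((\gamma_{1}C+\gamma_{2}I)U^{*}A_{j}U\big)=\gamma_{1}\tr(CU^{*}A_{j}U)+\gamma_{2}\tr A_{j}$ in each coordinate. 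For (3), $\tr(CU^{*}B_{i}U)=\sum_{j}t_{ij}\tr(CU^{*}A_{j}U)+f_{i}\tr C$, which is the asserted vector identity; the consequence $R(W_{C}(\bF))=W_{C}(R(\bF))$ then follows (the affine part carrying the factor $\tr C$, as recorded in the statement).

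\medskip
The content is in (4), which I read for a single $m$-tuple $\bA=(A_{1},\dots,A_{m})$ (for a larger $\bF$, $W_{C}(\bF)$ is just the union of the sets $W_{C}(\bA)$). Write $\hat X:=X-\tfrac1n(\tr X)I$ for $X\in\{C,A_{1},\dots,A_{m}\}$, each of trace $0$, and let $\cV_{0}$ be the complex space of trace-zero $n\times n$ matrices. Cyclicity and tracelessness give $\tr(CU^{*}A_{j}U)=\tr(\hat C U^{*}\hat A_{j}U)+\tfrac1n(\tr C)(\tr A_{j})$, so
$$W_{C}(\bA)=f+\Phi(\Omega),\qquad f:=\tfrac{\tr C}{n}(\tr A_{1},\dots,\tr A_{m}),\qquad \Omega:=\{U\hat C U^{*}:U\in\cU_{n}\},$$
where $\Phi\colon\cV_{0}\to\IC^{m}$ is the complex-linear map $T\mapsto(\tr(T\hat A_{1}),\dots,\tr(T\hat A_{m}))$. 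The $j$-th coordinate of $\Phi$ is the functional $T\mapsto\tr(T\hat A_{j})$; since $(X,Y)\mapsto\tr(XY)$ is a non-degenerate bilinear form on $\cV_{0}$ and each $\hat A_{j}\in\cV_{0}$, the span of these $m$ functionals --- hence $\rank\Phi$ --- equals $k:=\dim_{\IC}\span\{\hat A_{1},\dots,\hat A_{m}\}$. Thus $W_{C}(\bA)-f=\Phi(\Omega)\subseteq\Phi(\cV_{0})$, a $k$-dimensional subspace of $\IC^{m}$, which gives the ``$\Rightarrow$'' direction.

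\medskip
For the reverse direction and the sharpness of $k$, it suffices to show that $\Omega$ \emph{affinely} spans $\cV_{0}$: then $\operatorname{aff}\big(W_{C}(\bA)-f\big)=\Phi(\operatorname{aff}\Omega)=\Phi(\cV_{0})$ has complex dimension $k$, so $W_{C}(\bA)$ lies in no complex affine subspace of $\IC^{m}$ of dimension $<k$. Now $\operatorname{aff}\Omega$ is invariant under conjugation, hence so is its translation space $W_{0}\subseteq\cV_{0}$; since the conjugation action of $\cU_{n}$ on the complex space $\cV_{0}$ is irreducible, $W_{0}\in\{0,\cV_{0}\}$, and $W_{0}=0$ would force $\Omega=\{\hat C\}$, i.e.\ $\hat C$ central, i.e.\ $\hat C=0$, i.e.\ $C$ scalar --- which is excluded. (Equivalently, $\span\Omega=\cV_{0}$ together with $\int_{\cU_{n}}U\hat C U^{*}\,dU=\tfrac1n(\tr\hat C)I=0$ puts $0$ in $\operatorname{aff}\Omega$.) This proves the biconditional. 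The ``in particular'' clause is the case $k=0$: $W_{C}(\bA)$ is a singleton iff $\Phi(\cV_{0})=\{0\}$ iff every $\hat A_{j}=0$ iff $A_{j}=\mu_{j}I$ for all $j$, in which case the single point is $f=(\tr C)(\mu_{1},\dots,\mu_{m})$; this generalizes \Cref{P2.2}(c). The only non-formal ingredient --- and the one place where care is genuinely needed --- is the claim that the unitary orbit of a non-scalar matrix affinely fills $\cV_{0}$; a minor caveat is that when $\tr C=0$ the point $W_{C}(\bA)$ does not determine the scalars $\mu_{j}$, only the scalar-tuple form of $\bA$.
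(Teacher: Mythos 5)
The paper states this proposition without proof (it is offered as a routine extension of Proposition~\ref{P2.1} and Theorem~\ref{T3.1}), so there is no argument of the authors to compare against; judged on its own, your proof is correct. Parts (1)--(3) are the standard trace manipulations, and you rightly flag that the translation part in (3) carries the factor $\tr C$, so the closing identity $R(W_C(\bF))=W_C(R(\bF))$ must be read with that scaling. The substance is (4), and your route is clean: writing $W_C(\bA)=f+\Phi(\Omega)$ with $\Phi(T)=(\tr(T\hat A_1),\dots,\tr(T\hat A_m))$ and $\Omega$ the unitary orbit of $\hat C$, nondegeneracy of the trace pairing on trace-zero matrices gives $\rank\Phi=\dim\span\{\hat A_1,\dots,\hat A_m\}$, and sharpness follows once $\Omega$ affinely spans the trace-zero matrices, which you obtain from irreducibility of the conjugation action (your Haar-average remark is an equivalent way to place $0$ in the affine hull). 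An alternative that stays inside the paper's own toolkit: if $W_C(\bA)$ lay in a complex affine subspace of dimension less than $k$, some nonzero linear functional $z\mapsto\sum_j c_jz_j$ would be constant on it, i.e.\ $W_{\hat C}(B)$ would be a singleton for the traceless matrix $B=\sum_j c_j\hat A_j\neq 0$, contradicting Proposition~\ref{P2.2}(c) since neither $B$ nor $\hat C$ is scalar. Two caveats you should state explicitly rather than leave implicit: the biconditional in (4) has to be read as a statement about the dimension of the affine hull (your proof establishes exactly that), and your restriction of (4) to a single tuple is not merely a convenience --- for $|\bF|\ge 2$ the literal statement can fail (two distinct scalar tuples give span $\{0\}$ while $W_C(\bF)$ consists of two points when $\tr C\neq 0$), so the ``union'' remark does not extend the argument; likewise your $\tr C=0$ caveat for the ``in particular'' clause is warranted. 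These are defects of the statement as printed rather than gaps in your proof.
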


Note that if $C \in M_n$ is Hermitian, then 
$W_C(A_1, \dots, A_m) \subseteq \IC^m$ can be identified as 
$W_C(X_1, Y_1, X_2, Y_2, \dots, X_m, Y_m) \subseteq \IR^{2m}$,
where  $X_j = (A_j+A_j^*)/2$ and $Y_j = (A_j-A_j^*)/(2i)$.
One can obtain a ``real version'' of \Cref{P3.1} using
real scalars $\gamma_1, \gamma_2$, real matrix $T$,  real vector $f$, etc.

One can prove the following when 
$W_C(\bF)$ is a polyhedral set in $\IC^m$, i.e., 
a convex combination of a finite set of vertices.

\begin{proposition} Suppose $C \in M_n$ is non-scalar, and 
$\bF$ is a non-empty set of $M_n^m$. If $W_C(\bF)$ is polyhedral,
then every vertex   has the form 
$(\mu_1, \dots, \mu_m)$ with $\mu_j = \tr V^*CV U^*A_jU$,
where $(A_1, \dots, A_m) \in \bF$,
$U, V \in M_n$ are unitary so that $V^*CV, U^*A_1U, \dots, U^*A_mU$
are in lower triangular matrices
with diagonal entries 
$$c_1, \dots, c_n, a_1(1), \dots, a_n(1),
\dots, a_1(m), \dots, a_n(m)$$ 
and $\mu_j=\sum_{\ell=1}^n c_\ell a_{\ell}(j)$.
Furthermore, if $c_1, \dots, c_n$ are distinct, then 
$$V^*CV, U^*A_1U,\dots, U^*A_mU$$ 
are diagonal matrices.
\end{proposition}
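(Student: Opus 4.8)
The plan is to localize a vertex of $W_C(\bF)$ inside the $C$-numerical range of a single matrix built from a supporting functional, and then to feed that into the corner-point analysis of \Cref{P2.2}. Since $W_C(\bF)=\bigcup_{\bA\in\bF}W_C(\bA)$ is a polytope, a vertex $v=(\mu_1,\dots,\mu_m)$ is an extreme point and so lies in $W_C(\bA)$ for some $\bA=(A_1,\dots,A_m)\in\bF$; write $\mu_j=\tr(CW^*A_jW)$ with $W$ unitary. Using \Cref{P3.1} to restrict to the complex-affine span of $W_C(\bF)$, in which the polytope is full-dimensional, the normal cone of $v$ has nonempty interior, so we may choose a real-linear functional $L(z)=\Re\sum_j\bar\beta_jz_j$ in that interior; then $L$ attains its maximum over $W_C(\bF)$ uniquely at $v$. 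With $B=\sum_j\bar\beta_jA_j$ one has $L\big((\tr(CU^*A_jU))_j\big)=\Re\tr(CU^*BU)$, so over all unitaries the maximum of $U\mapsto\Re\tr(CU^*BU)$ is attained exactly on $\mathcal M:=\{U:(\tr(CU^*A_jU))_j=v\}$, which contains $W$. Rotating $L$ to $\Re\big(e^{i\theta}\sum_j\bar\beta_jz_j\big)$ replaces $B$ by $e^{i\theta}B$ and, for $|\theta|$ small, keeps the maximizing set equal to $\mathcal M$; hence $\tr(CW^*BW)$ maximizes $\Re(e^{i\theta}\,\cdot\,)$ over $W_C(B)$ for a whole interval of $\theta$, which is precisely the assertion that $\tr(CW^*BW)$ is a corner point of $W_C(B)$.

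Next I would Schur-triangularize $C$: pick a unitary $V$ with $V^*CV$ lower triangular and diagonal $c_1,\dots,c_n$, the eigenvalues of $C$. Since $\tr(CW^*BW)$ is a corner point of $W_C(B)$, \Cref{P2.2}(b) produces a unitary realizing this corner point through a lower-triangular form of $B$ whose diagonal is a permutation of the eigenvalues of $B$; by the uniqueness of the maximizer this unitary again lies in $\mathcal M$, hence realizes $\mu_1,\dots,\mu_m$ simultaneously. The crux — and the step I expect to be the main obstacle — is to upgrade ``$B=\sum_j\bar\beta_jA_j$ is triangularized'' to ``every $A_j$ is triangularized by one common unitary $U$'', together with $\mu_j=\tr(V^*CV\,U^*A_jU)=\sum_\ell c_\ell a_\ell(j)$. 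The leverage here is that $L$ lies in the interior of a full-dimensional normal cone, so the corner-point conclusion above holds not for a single $B$ but for $B$ running over an open cone $\Omega\subseteq\mathrm{span}_{\IC}\{A_1,\dots,A_m\}$, with one fixed $U\in\mathcal M$ serving all of them; letting $B$ vary over $\Omega$ and repeatedly invoking \Cref{P2.2}(a) and (b) — which annihilate off-diagonal entries between positions carrying distinct diagonal values — should force the simultaneous lower-triangular structure of $C$ (via $V$) and of $A_1,\dots,A_m$ (via $U$), after which $\mu_j=\sum_\ell c_\ell a_\ell(j)$ drops out because a product of lower-triangular matrices is lower triangular.

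Finally, for the addendum, suppose $c_1,\dots,c_n$ are distinct. With $U^*A_1U,\dots,U^*A_mU$ simultaneously lower triangular, each combination $X_\beta:=\sum_j\bar\beta_jU^*A_jU$ with $\beta\in\Omega$ is lower triangular, and $\tr\big((V^*CV)X_\beta\big)=\sum_j\bar\beta_j\mu_j$ is a corner point of $W_C\big(\sum_j\bar\beta_jA_j\big)=W_{V^*CV}(X_\beta)$; since $V^*CV$ is lower triangular with distinct diagonal, \Cref{P2.2}(a) forces all off-diagonal entries of $X_\beta$ to vanish, and as $\beta$ ranges over an open set this makes every $U^*A_jU$ diagonal. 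The symmetric argument — with a suitable combination of the $A_j$ now playing the role of the fixed triangular matrix — yields that $V^*CV$ is diagonal as well. In sum, once the supporting-functional reduction to a corner point is in place, the proof runs on \Cref{P2.1,P2.2}, with essentially all of the genuine work concentrated in the rigidity step of the second paragraph that promotes the single-matrix corner-point structure to the whole $m$-tuple simultaneously.
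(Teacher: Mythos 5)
Your first paragraph is essentially correct and is a reasonable start (note only that the polytope should be made full-dimensional inside its \emph{real} affine hull, not the ``complex-affine span''): picking a real-linear functional in the interior of the normal cone at the vertex $v$ does show that for every $\beta$ in a nonempty open set $\Omega\subseteq\IC^m$ the number $\nu_\beta=\sum_j\bar\beta_j\mu_j$ is a corner point of $W_C(B_\beta)$, $B_\beta=\sum_j\bar\beta_jA_j$, and that every unitary attaining $\nu_\beta$ realizes the whole vertex $v$. The genuine gap is exactly the step you yourself flag as the main obstacle, and nothing you write closes it. \Cref{P2.2}(b), applied to $B_\beta$, produces a triangularizing unitary that depends on $\beta$; knowing that all these unitaries lie in $\mathcal{M}$ does not make them equal, and your phrase ``with one fixed $U\in\mathcal{M}$ serving all of them'' is precisely what has to be proved: a fixed element of $\mathcal{M}$ attains the value $\nu_\beta$ for every $\beta$, but nothing shows that it puts even one $B_\beta$, let alone every $A_j$, into lower triangular form compatible with $V^*CV$. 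Nor can \Cref{P2.2}(a) be ``repeatedly invoked'' to create this structure: its hypothesis is that \emph{both} matrices are already in lower triangular form, so it only refines an existing triangular pair into a block pattern; it never produces triangular form for the remaining $A_j$. Simultaneous unitary triangularization of $A_1,\dots,A_m$ compatible with $C$ is a strong rigidity conclusion (false for generic tuples), and extracting it from the vertex hypothesis is the actual content of the proposition --- it is the analogue of the conical-point theorem of Binding and Li cited right after the statement --- so your argument is missing its core. (The paper itself states this proposition without proof, so there is no authorial argument to compare against; the assessment above is on the merits of your outline.)

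Two smaller points. First, in the addendum, deducing that each $U^*A_jU$ is diagonal from distinctness of $c_1,\dots,c_n$ via \Cref{P2.2}(a) is fine once the main part is granted, but the ``symmetric argument'' for $V^*CV$ requires, for each pair $p\ne q$, some $\beta\in\Omega$ whose triangular form has distinct $(p,p)$ and $(q,q)$ diagonal entries; when the vectors $(a_p(1),\dots,a_p(m))$ and $(a_q(1),\dots,a_q(m))$ coincide (e.g.\ all $A_j$ scalar) no such $\beta$ exists, so that step needs a separate treatment of the degenerate case. Second, the appeal to \Cref{P3.1} for the dimension reduction is inessential and slightly misapplied; what you need is only the elementary fact that a vertex of a polytope admits an open set of exposing real-linear functionals on $\IR^{2m}\cong\IC^m$, which is fine --- the failure point remains the passage from one corner point per $B_\beta$ to one unitary for the whole tuple.
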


If $C$ has distinct eigenvalues, and if $W_C(\bA)$ 
has a conical point $(\mu_1, \dots, \mu_m)$ on the boundary, 
i.e., there is a pointed cone $K \subseteq \IC^m \equiv \IR^{2m}$ 
with vertex $(\mu_1, \dots, \mu_m)$ such that 
$$W_C(\bA) \cap \{(\mu_1 + \nu_1, \dots, \mu_m+\nu_m):
|\nu_j| \le \varepsilon\} \subseteq K$$
for some sufficiently small $\varepsilon > 0$, 
then $\{C, A_1, \dots, A_m\}$ is a commuting family of normal matrices,
and $\conv W_C(\bA)$ is a polyhedral set, see \cite{Binding}.

Note that if there is $\bB \in \bF$ such that
$W_C(\bB)$ is a singleton, a subset of a straight line, or a convex polygon,
then we can apply the results on $W_C(\bB)$ to deduce that $C$ and $\bB$ has 
special structure. Then we can deduce results on $W_C(\bF)$.

We can obtain some topological properties $W_C(\bF)$.

\begin{proposition}
Let $C \in M_n$ be non-scalar, and $\bF\subseteq M_n^m$ be a nonempty set.
\begin{enumerate}
\item If $\cF$ is bounded, then so is $W_C(\bF)$.
\item If $\cF$ is connected, then so is $W_C(\bF)$.
\item If $\cF$ is compact, then so is $W_C(\bF)$.
\end{enumerate}
\end{proposition}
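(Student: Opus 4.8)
The plan is to mimic the proof of \Cref{T3.1}(c)--(e) almost verbatim, replacing the scalar-valued trace $\tr(CU^*AU)$ by the vector-valued map $\Phi_C\colon (\cU_n\times M_n^m)\to\IC^m$ given by $\Phi_C(U,\bA)=(\tr(CU^*A_1U),\dots,\tr(CU^*A_mU))$. The key observation is that each coordinate of $\Phi_C$ is jointly continuous in $(U,\bA)$, and that $W_C(\bF)=\Phi_C(\cU_n\times\bF)$. All three statements then follow from elementary properties of continuous images together with the compactness of $\cU_n$.

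For (a), if $\bF$ is bounded, say $\|A_i\|<R$ for all $(A_1,\dots,A_m)\in\bF$ and all $i$, then $|\tr(CU^*A_iU)|\le n\|C\|\,\|A_i\|<n\|C\|R$ for every unitary $U$, so every coordinate of every point of $W_C(\bF)$ is bounded in modulus by $n\|C\|R$; hence $W_C(\bF)$ is bounded. For (b), if $\bF$ is connected, then $\cU_n\times\bF$ is connected since $\cU_n$ is connected (being path-connected), and $W_C(\bF)=\Phi_C(\cU_n\times\bF)$ is the continuous image of a connected set, hence connected. For (c), if $\bF$ is compact, then $\cU_n\times\bF$ is compact (product of compact sets), and $W_C(\bF)$ is the continuous image of a compact set, hence compact. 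Alternatively, one can repeat the sequential argument in the proof of \Cref{T3.1}(e): given a convergent sequence $\Phi_C(U_k,\bA_k)\to v_0$ with $\bA_k\in\bF$, pass to a subsequence so that $\bA_k\to\bA_0\in\bF$ (using compactness of $\bF$) and then a further subsequence so that $U_k\to U_0\in\cU_n$ (using compactness of $\cU_n$); continuity of $\Phi_C$ gives $\Phi_C(U_0,\bA_0)=v_0$, so $v_0\in W_C(\bF)$, and boundedness follows from (a).

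There is essentially no obstacle here: the proof is routine once one notes that the vector-valued setting behaves exactly like the scalar case because continuity, connectedness, and compactness are all coordinatewise or product-stable properties. The only point that deserves a sentence is that $W_C(\bF)$ being a subset of $\IC^m\cong\IR^{2m}$ rather than $\IR^2$ changes nothing in any of the arguments. One could therefore simply write: ``The proof is identical to that of \Cref{T3.1}(c)--(e), with $\tr(CU^*AU)$ replaced throughout by the $m$-tuple $(\tr(CU^*A_1U),\dots,\tr(CU^*A_mU))$,'' and leave it at that, or spell out the three short arguments above for completeness.
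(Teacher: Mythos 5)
Your proof is correct and follows essentially the same route as the paper: for boundedness and compactness the paper invokes the same bound $|\tr(CU^*A_iU)|\le n\|C\|\,\|A_i\|$ and the same subsequence argument as in Section 2, and for connectedness it concatenates paths using the path-connectedness of $\cU_n$ together with the connectedness of $\bF$, which are exactly the ingredients you package as the continuous image of the connected (resp.\ compact) product $\cU_n\times\bF$ under $\Phi_C$. Your single-map formulation is a slightly cleaner write-up of the paper's sketch, not a different argument.
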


\begin{proof} The proof of the boundedness and compactness are similar to 
those for $W_C(\cF)$ in Section 2. The prove connectedness, 
for any $\bA, \bB \in \bF$ and unitary $U_0, U_1\in M_n$, there is a path
joining $U_t$ with $t \in [0,1]$ joining $U_0$ and $U_t$,
and hence 
there is a path joining $(\tr U_0^*CU_0 A_1, \dots, \tr U_0^*CU_0A_m)$
to $(\tr U_1^*CU_1 A_1, \dots, \tr U_1^*CU_1A_m)$, which is connected to
$(\tr U_1^*CU_1 B_1, \dots, \tr U_1^*CU_1B_m)$.
\end{proof}

\subsection{Star-shapedness and convexity}

Here we consider whether $W(\bF)$ is star-shaped or  convex. It is known that if $C\in M_n$ is Hermitian, then $W_C(A_1,A_2,A_3)$ is convex for any Hermitian $A_1, A_2, A_3 \in M_n$ with $n \ge 3$, see~\cite{YHAuYeung1983b}.
One may wonder whether 
\Cref{intersect_m} admits an extension to this setting.
The following example shows that the answer is negative.

\begin{example} Let  
$$\bA_t = \left(
\begin{pmatrix} t & & \cr & t & \cr & & 1-t \cr\end{pmatrix},
\begin{pmatrix} 1-t & & \cr & t & \cr & & 1-t\cr\end{pmatrix}
\begin{pmatrix} 0 & & \cr & t & \cr & & 1-t \cr\end{pmatrix} \right),
\qquad t \in [0, 1].$$
Notice that $\co\{\bA_0,\bA_1\}=\{\bA_t:0\le t\le 1 \}$. Moreover, $(0,0,0) \in W(\bA_0 )\cap W( \bA_1)$, but $(0,0,0)$ 
is not a star-center of $W(\conv\{ \bA_0, \bA_1\})$ as
$(1/2,1/2,0) \in W(\bA_{1/2})$, however,  $\conv\{(0,0,0), (1/2,1/2,0)\} 
\not \subseteq W(\conv\{\bA_0, \bA_1\})$. Nevertheless, $(1/2,1/2,1/2)\in W(\bA_t)$
for all $t \in [0,1]$, and hence $(1/2,1/2,1/2)$ is a star-center of 
$W(\conv\{\bA_0, \bA_1\})$.
Actually, one can show that $W(\bA_t) = \co\{(t,1-t,0), (t,t,t), (1-t,1-t,1-t)\}.$
\end{example}

Here is an example showing that $W(\bA)$ may not be star-shaped in general.

\begin{example} 
Let $\bA=(A_1,A_2,A_3)$ with $A_1=\diag(0,1,0)$, $A_2=\diag(1,0,-1)$, $A_3=I_3$ and let $\bB=(B_1,B_2,B_3)$ with $B_1=\diag(1,0,0),$ $B_2=\diag(0,-1,1)$, $B_3=0_3$. If $\bF=\co\{\bA,\bB\}$, then $W(\bF)$ is the union of the triangular disk with vertices
$$(1-t,t,t),(t,t-1,t),(0,1-2t,t).$$
Let $g:\IR^3\to \IR^3$ be given by $g((a,b,c))= (a,-b,1-c)$. We have 
$$\begin{array}{rll}g((1-t,t,t))&=(1-t,-t, 1-t)&=(1-t,(1-t)-1,1-t)\\&\\
g((t,t-1,t))&=(t,1-t,1-t)&=(1-(1-t),(1-t),(1-t))\\&\\
g((0,1-2t,t))&=(0,2t-1,1-t)&=(0,1-2(1-t),(1-t))\end{array}$$
Therefore, $g(W(\bF))=W(\bF)$.
Moreover, we claim that $W(\bF)$ is not star-shaped.

\rm  
Suppose the contrary that $W(\bF)$ is star-shaped with $(a,b,t_0)$ be a star-center. Then $(a,-b,1-t_0)$ is also a star-center. 
As the set of all star-center of a star-shaped set is convex. We may now assume without loss of generality that $b=0$ and $t_0=1/2$. Assume now $a>0$. By assumption, for all $0\leq t\leq 1$, 
$t(a,0,1/2)+(1-t)(1,0,0)=(1-t(1-a),0,t/2)\in W(\bF).$
By direct computation, given $0\leq t\leq 1$, we have
$$\max\{ \alpha\in\mathbb{R}:(\alpha,0,t)\in W(\bF)\}= t^2+(1-t)^2=1-2t+2t^2.$$
However for sufficient small $t>0$
$$1-t(1-a)>1-t(1-t^2/2)>(t/2)^2+(1-t/2)^2,$$
which contradicts that $(1-t(1-a),0,t/2)\in W(\bF)$. Therefore, we have $a=0$. However $(0,0,1/2)$ is not a star-center as 
$$\frac{1}{3}(0,0,1/2)+\frac{2}{3}(0,0,1)=\left(0,0,\frac{5}{6}\right)\notin\co\left\{\left(\frac{1}{6},\frac{5}{6},\frac{5}{6}\right),\left(\frac{5}{6},-\frac{1}{6},\frac{5}{6}\right),\left(0,-\frac{2}{3},\frac{5}{6}\right)\right\}.$$
Therefore, $W(\bF)$ is not star-shaped. 
\end{example}

It is challenging to determine conditions on $\bF$ so that $W(\bF)$ is 
star-shaped.

\medskip
\noindent
{\large\bf Acknowledgment}

Our study was inspired by some discussion at the AIM workshop on
Crouzeix's conjecture, July 30 - August 5, 2017, AIM, San Jose.
The second and the third authors would like to express their thanks
to the organizer and the colleagues at AIM for the well-organized 
and stimulating workshop.

Li is an honorary professor of the Shanghai University,
and an affiliate member of the Institute for Quantum Computing, University of
Waterloo; his research was supported by
the USA NSF DMS 1331021, the Simons Foundation
Grant 351047, and NNSF of China Grant 11571220.
Research of Sze and Lau were supported by a PolyU central research grant G-YBKR and a HK RGC grant PolyU 502512.
The HK RGC grant also supported the post-doctoral fellowship of Lau at the Hong Kong Polytechnic University. 
\medskip

\noindent
{\bf Addresses}

\noindent
\noindent(P.S. Lau) Department of Applied Mathematics, The Hong Kong Polytechnic University,
Hung Hom, Hong Kong. (panshun.lau@polyu.edu.hk)

\noindent(C.K. Li)
Department of Mathematics, College of William \& Mary,
Williamsburg, VA 23185.
(ckli@math.wm.edu)

\noindent
(Y.T. Poon)
Department of Mathematics, Iowa State University,
Ames, IA 50011.
\\ (ytpoon@iastate.edu)

\noindent(N.S. Sze) Department of Applied Mathematics, The Hong Kong Polytechnic University,
Hung Hom, Hong Kong. (raymond.sze@polyu.edu.hk)


\begin{thebibliography}{WWW}
\bibitem{YHAuYeung1979} Y. Au Yeung and Y.T. Poon, 
A remark on the convexity and positive definiteness concerning Hermitian matrices, 
Southeast Bull. Math. 3 (1979), 85-92.

\bibitem{YHAuYeung1983b} Y. Au Yeung and N.K. Tsing, 
An extension of the Hausdorff-Toeplitz theorem on the numerical range, 
Proc. Amer. Math. Soc. 89 (1983), 215-218.

\bibitem{Binding} P. Binding and C. K. Li, 
Joint ranges of hermitian matrices and simultaneous diagonalization, 
Linear Algebra Appl. 151 (1991), 157-168.

\bibitem{CT} W.S. Cheung and N.K. Tsing, 
The $C$-numerical range of matrices is star-shaped, 
Linear and Multilinear Algebra 41 (1996), 245-250.

\bibitem{Chien2013} M.T. Chien and H. Nakazato, 
Strict convexity of the joint $c$-numerical range, 
Linear Algebra Appl. 438 (2013), 1305-1321.

\bibitem{MDChoi2003} 
M.D. Choi, C.K. Li and Y.T. Poon, 
Some convexity features associated with unitary orbits, 
Canad. J. Math. 55 (2003), 91-111.

\bibitem{MC} M. Crouzeix, 
Bounds for analytical functions of matrices, 
Integral Equations Operator Theory 48 (2004), 461477.

\bibitem{Halmos} P.R. Halmos, A Hilbert Space Problem Book, Spring-Verlag, 
New York, 1982.

\bibitem{HJ} R.A. Horn and C.R. Johnson, Topics in Matrix Analysis,
Cambridge University Press, Cambridge,1991.

\bibitem{Li} C.K. Li, 
$C$-numerical ranges and $C$-numerical radii, 
Linear and Multilinear Algebra {37} (1994), 51-82.

\bibitem{LPW} C.K. Li, D.C. Pelejo, Y.T. Poon and K.Z. Wang, 
Minkowski product of convex sets 
and product numerical range, Operators and Matrices, 10 (2016), 945-965.

\bibitem{LP} C.K. Li and Y.T. Poon, Convexity of the joint numerical range, 
SIAM J. Matrix. Anal. Appl. 21 (2000), 668-678.

\bibitem{LT} C.K. Li and N.K. Tsing, Matrices with circular symmetry on their unitary orbits and $C$-numerical ranges, 
Proc. of Amer. Math. Soc. 111 (1991), 19-28.

\bibitem{Nakazato2006} H. Nakazato,
The $C$-numerical range of a $3\times 3$ normal matrix,
Nihonkai Math. J. 17 (2006), 187-197.

\bibitem{Karol}
Z. Puchala, P. Gawron, J.A. Miszczak, L. Skowronek, M.D. Choi, and K. Zyczkowski,
Product numerical range in a space with tensor product
structure, Linear Algebra Appl.\ 434 (2011), 327-342.

\bibitem{Thomas}
T. Schulte-Herbr\"{u}ggen , G. Dirr , U.Helmke and S.J. Glaser
The significance of the $C$-numerical range and the local $C$-numerical range in quantum control and quantum information,
Linear and Multilinear Algebra 56 (2008), 3-26.

\bibitem{NKTsing1984} 
N.K. Tsing, The constraint bilinear form and the $C$-numerical range, 
Linear Algebra Appl. 56 (1984), 195-162.

\bibitem{Westwick1975} 
R. Westwick,
A theorem on numerical range, 
Linear and Multilinear Algebra 2 (1975), 311-315.



\end{thebibliography}
\end{document}